\subjclass[2020]{14E30, 14D10, 14G17, 14J45}
\tikzstyle{nodal}=[circle,draw,fill=black,inner sep=0pt, minimum width=4pt]
\tikzstyle{half-fiber}=[rectangle,draw=black,thick,inner sep=0pt, minimum width=5pt, minimum height=5pt]
\tikzset{double distance = 2pt}
\author{Fabio Bernasconi}
\address{Dipartimento di Matematica “Guido
	Castelnuovo”, SAPIENZA Università di Roma, Piazzale Aldo Moro 5, I-00185 
	Roma}
\email{fabio.bernasconi@uniroma1.it}
\author{Gebhard Martin}
\address{Mathematisches Institut \\ Universität Bonn \\ Endenicher Allee 60 \\ 53115 Bonn \\ Germany}
\email{gmartin@math.uni-bonn.de}
\title{Bounding geometrically integral del Pezzo surfaces}
\date{\today}
\begin{document}

\begin{abstract}
We prove several boundedness statements for geometrically integral normal del Pezzo surfaces $X$ over arbitrary fields.

We give an explicit sharp bound on the irregularity if $X$ is canonical or regular.
In particular, we show that wild canonical del Pezzo surfaces exist only in characteristic $2$.
As an application, we deduce that canonical del Pezzo surfaces form a bounded family over $\mathbb{Z}$, generalising work of Tanaka.

More generally, we prove the BAB conjecture on the boundedness of $\varepsilon$-klt del Pezzo surfaces over arbitrary fields of characteristic different from $2, 3,$ and $5$.
\end{abstract}
	
	\maketitle
	
	\tableofcontents

\section{Introduction}

We work over a field $k$ of prime characteristic $p>0$. 
When running the Minimal Model Program (MMP for short) for klt projective varieties $Z$ with canonical divisor $K_Z$ not pseudo-effective, the outcomes are Mori fibre spaces, \emph{i.e.} projective fibrations $f \colon X \to B$ of relative Picard rank 1 where $X$ has klt singularities, $\dim B < \dim X$ and the anti-canonical divisor $-K_X$ is $f$-ample. 
It is then natural to study the geometry of $X$ in terms of the base $B$ and the general fibre.
In characteristic $p>0$, the theorem on generic smoothness on general fibres does not always hold, and there are examples of Mori fibre spaces where the general fibre might fail to be normal or even reduced \cite{MS03}. 
In this case, it is natural to study the \emph{generic} fibre $X_{k(B)}$, which is a klt Fano variety defined over the fraction field $k(B)$, which is \emph{imperfect} as soon as $\dim(B) \geq 1$.

Thanks to the recent development of the 3-dimensional MMP \cite{HX15, CTX15, BW17, HW22, Wal23}, Mori fibre spaces are known to exist for 3-folds over fields of characteristic $p > 5$. 
The next step in the classification problem consists in understanding the generic fibre of a 3-fold Mori fibre space. This work is motivated by the following general question:

\begin{Question*}
 Do the generic fibers of Mori fibre spaces form a bounded family? 
 Can we give explicit bounds on their cohomological invariants?
\end{Question*}

The main invariant we are interested in is the \emph{irregularity} of the generic fibre. Recall that the irregularity of $X_{k(B)}$ is defined as $h^1(X_{k(B)},\mathcal{O}_{X_{k(B)}}):=\dim_{k(B)} H^1(X_{k(B)}, \mathcal{O}_{X_{k(B)}})$. 
The case of relative dimension 1 is easy to treat: regular Fano curves are conics, they have vanishing irregularity and they fail to be geometrically regular only in characteristic $p=2$.
The case of relative dimension 2, \emph{i.e.} the geometry of del Pezzo surfaces over imperfect fields has turned out to be more difficult to handle. 
There are two known series of examples of canonical del Pezzo surfaces with positive irregularity:

\begin{enumerate}
    \item In \cite{Sch07}, Schr\"{o}er constructs a canonical del Pezzo surface $X$ with a unique singular factorial point of type $A_1$, $h^1(X, \mathcal{O}_X)=1, \rho(X)=1$ and $K_X^2=1$ over an arbitrary imperfect field of characteristic 2. 
    % This shows that the assumption on the characteristic in \autoref{p-sharp-tameness-3} is optimal.
    \item In \cite{Mad16} Maddock constructs regular del Pezzo surfaces $X_1$ and $X_2$ defined over 
    an imperfect field of $p$-degree $3$ (resp. $4$) with $K_{X_d}^2 = d$ and $h^1(X_d,\mathcal{O}_{X_d}) = 1$. Moreover, $X_1$ is geometrically integral and $X_2$ is not.
    % \item Note that \autoref{p-sharp-tameness-3} cannot be extended to the klt case in characteristic 3 by \cite{Tan20}.
\end{enumerate} 

On the positive side, the recent works \cite{PW,FS20, JW21, BT22} indicate that the pathological behaviour of del Pezzo fibrations is particular to small characteristics.
In this article, we further restrict the possibilities for the irregularity of geometrically integral canonical del Pezzo surfaces defined over imperfect fields.
Our first main result is the following:

\begin{theorem}\label{t-intro-irr}
	Let $X$ be a geometrically integral normal locally complete intersection del Pezzo surface over a field $k$ of characteristic $p$. If $h^1(X, \mathcal{O}_X) \neq 0$, then $k$ is an imperfect field, $\rho(X)=1$, and either
	\begin{enumerate}
		\item $p=3$, $h^1(X, \mathcal{O}_X)=2$, $K_X^2 = 1$, and $X$ is not canonical, or
		\item  $p=2$, $h^1(X, \mathcal{O}_X)=1$, and $K_X^2 \leq 2$.
	\end{enumerate}
\end{theorem}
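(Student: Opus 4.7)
The plan is to pass to the geometric base change and analyse the normalisation of the resulting integral but non-normal surface, extracting constraints from the conductor formula together with Mori-theoretic analysis of the normalisation.

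First, I would reduce to the case that $k$ is imperfect: by flat base change, $H^1(X_{\bar{k}}, \mathcal{O}_{X_{\bar{k}}}) = H^1(X, \mathcal{O}_X) \otimes_k \bar{k}$, and on a normal del Pezzo surface over an algebraically closed field the irregularity vanishes (via the rational minimal resolution and Kawamata--Viehweg vanishing), so positive irregularity forces $k$ to be imperfect. Next, let $\pi \colon Y \to \bar{X} := X_{\bar{k}}$ denote the normalisation, with conductor divisor $C \subset Y$. The lci hypothesis makes $\bar{X}$ Gorenstein, yielding the conductor identity $K_Y + C = \pi^* K_{\bar{X}}$, so that $-(K_Y + C)$ is ample and in particular $-K_Y$ is big. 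Consequently the minimal resolution of $Y$ is a smooth rational surface, and Leray gives $H^1(Y, \mathcal{O}_Y) = 0$. The short exact sequence
\[
0 \to \mathcal{O}_{\bar{X}} \to \pi_* \mathcal{O}_Y \to \mathcal{Q} \to 0,
\]
with $\mathcal{Q}$ supported on the conductor, then identifies $h^1(X, \mathcal{O}_X)$ with $h^0(\mathcal{Q})$, reducing the problem to bounding the length of $\mathcal{Q}$.

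To bound $h^0(\mathcal{Q})$ I would combine adjunction on $C \subset Y$ with the ampleness of $-(K_Y + C)$: the inequality $-(K_Y + C) \cdot C > 0$ constrains the arithmetic genus and degree of the conductor, while Frobenius-pullback arguments in the spirit of \cite{PW, FS20, JW21, BT22} force the conductor to be supportable only in small characteristic. This should rule out $p \geq 5$ and, for $p = 2$ and $p = 3$, leave a short list of conductor configurations from which the sharp numerical bounds on $K_X^2$ and $h^1$ can be read off; the incompatibility of the $p = 3$ configuration with canonical singularities is then seen by matching the singularity type of $\bar{X}$ with the conductor type on $Y$, which in that case necessarily introduces a discrepancy strictly less than zero.

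For the Picard rank statement, if $\rho(X) \geq 2$ then a second $K_X$-negative extremal ray produces either a conic bundle $X \to B$ over a regular genus-zero curve---where the Leray spectral sequence combined with the vanishing of $h^1$ for regular Fano conics forces $h^1(X, \mathcal{O}_X) = 0$---or a divisorial contraction to another geometrically integral normal lci del Pezzo with $h^1 > 0$, handled by descending induction on Picard rank. I expect the main obstacle to be the case analysis extracting the sharp bounds in small characteristic, since it combines the combinatorics of conductors on Gorenstein del Pezzo surfaces over $\bar{k}$ with the fine structure of singularities produced by purely inseparable base changes in characteristics $2$ and $3$; in particular, showing that the $p = 3$ case cannot occur on a \emph{canonical} surface requires a precise correspondence between the type of $C$ and the local singularities of $\bar{X}$ rather than the soft inequalities used to rule out larger characteristics.
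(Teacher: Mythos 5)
Your reduction steps at the start are correct and match the paper's setup: passing to $\bar{k}$, normalising to get $\pi \colon Y \to X_{\bar{k}}$, using the Gorenstein conductor formula $K_Y + C = \pi^* K_{X_{\bar{k}}}$, and identifying $h^1(X,\mathcal{O}_X)$ with $h^0(\mathcal{Q})$ via the normalisation exact sequence. However, the core of your proposal --- bounding $h^0(\mathcal{Q})$ by ``adjunction on $C$ plus ampleness of $-(K_Y+C)$'' --- does not by itself yield the sharp bounds. The length of $\mathcal{Q}$ depends on how $\mathcal{O}_{X_{\bar{k}}}$ sits inside $\pi_*\mathcal{O}_Y$, not just on the numerics of $C$; for example, Reid's structure theory shows $h^1$ is determined by the (possibly non-reduced) conductor scheme $D \subset X_{\bar{k}}$ and the gluing data, and one has $(p-1) \mid h^1(\mathcal{O}_X)$ with $h^1$ increasing without bound over the family of possible conductor configurations. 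The paper's actual mechanism is a two-sided squeeze: an explicit \emph{lower} bound on $h^0(X,\omega_X^{-n})$ coming from sections of $\omega_Y^{-n}(-nC)$ that vanish on $C$ (\autoref{c-linsystem}), converted by Riemann--Roch into an \emph{upper} bound on $h^1(\mathcal{O}_X)$; versus Maddock's lower bound on $h^1(\mathcal{O}_X)$ (which is where the lci hypothesis, via the $\alpha$-torsor machinery, is used in an essential way). Comparing these two pins down $d$ and $h^1$. Your plan omits this mechanism entirely, and further omits the $\alpha_{\omega_X^3}$-torsor argument needed to exclude $h^1 \geq 2$ in characteristic $2$.

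The $p=3$ non-canonicity step as you sketch it would not work. You propose to deduce non-canonicity of $X$ by ``matching the singularity type of $\bar{X}$ with the conductor type on $Y$,'' but the discrepancies of $X_{\bar{k}}$ control nothing directly about $X$ over $k$: it is entirely possible for $X$ to be regular (hence canonical, with all discrepancies $\geq 0$) while $X_{\bar{k}}$ is non-normal with very negative discrepancies --- this is precisely the pathology that makes these surfaces interesting. The paper's argument (\autoref{p-wildp=3}) instead supposes $X$ regular with $p=3$ and $h^1 \neq 0$, analyses the linear system $|-2K_X|$ (showing it has no fixed part by a torsion-line-bundle computation), and finds that a $k$-rational base point must lie in the image of the conductor, forcing $X$ to be non-regular there --- a contradiction; the canonical case then reduces to the regular case via the MMP and Kawamata--Viehweg vanishing along a Mori fibration. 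This argument is of a rather different character than what you propose, and without it the non-canonicity claim in case (1) is not established.

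Your MMP argument for $\rho(X)=1$ (running into a conic bundle or a divisorial contraction) is plausible in outline, but note that divisorial contractions need not preserve the lci hypothesis, so the descending induction you invoke is not clean. The paper instead extracts $\rho(X)=1$ after the fact from the classification of $(Y,C)$ as a weighted projective plane of Picard rank $1$, which gives $\rho(X) \leq \rho(X_{\bar{k}}) \leq \rho(Y) = 1$ directly and avoids this issue.
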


We note that our bound on the irregularity in the regular case is sharp, as Maddock's example shows.
In \autoref{p-wildp=2}, we describe torsors over the regular wild del Pezzo surfaces in characteristic $p = 2$. This is a first step towards a classification of wild regular del Pezzo surfaces. In particular, it may be useful for the construction of explicit examples in the style of Maddock.
Note that the hypothesis on geometric integrality is automatically satisfied for normal del Pezzo surfaces appearing as generic fibres of 3-folds by \cite[Theorem 2.3]{Sch10}.

In the second part of this article, we prove boundedness results for del Pezzo surfaces over imperfect fields.
The Borisov--Alexeev--Borisov 
 (BAB) conjecture (see \autoref{BAB-conj}) states that mildly singular ($\varepsilon$-klt) Fano varieties of dimension $d$ form a bounded family over $\Spec \mathbb{Z}$. 
While the conjecture has been proven over fields of characteristic 0 by Birkar \cite{Bir21}, it is still open over fields of characteristic $p$. 

More precisely, while the case of del Pezzo surfaces over perfect fields has been known for a long time (see \cite{Ale94, AM04} and \cite[Lemma 3.1]{CTW17}), already the boundedness of $3$-dimensional Fano varieties is open. 
In this direction, the BAB conjecture for generic fibres of Mori fibre spaces would be desirable.
In \cite{Tan19}, Tanaka showed that geometrically integral regular del Pezzo surfaces form a bounded family.
Using \autoref{t-intro-irr} and the results on the irregularity of klt del Pezzo surfaces of \cite{BT22}, we are able to prove various instances of the BAB conjecture, following the strategy of Alexeev--Mori \cite{AM04}:

\begin{theorem}\label{t-intro-bdd}
The following classes of del Pezzo surfaces are bounded over $\Spec \mathbb{Z}$:
\begin{eqnarray*}
\mathcal{X}_{\textup{dP}, \textup{can}} &=& \left\{ X \mid X \emph{ is a geometrically integral canonical del Pezzo surface}\right\}, \\
\mathcal{X}_{\textup{dP}, \varepsilon}^{\textup{tame}} &=& \left\{ X \mid X \emph{ is a geometrically integral tame } \varepsilon\emph{-klt del Pezzo surface} \right\}, \text{ and} \\
\mathcal{X}_{\textup{dP}, \varepsilon}^{>5} &=&  \left\{ X \mid X \emph{ is an } \varepsilon\emph{-klt del Pezzo surface s.t. char}(H^0(X, \mathcal{O}_X)) \neq 2, 3, 5 \right\}.
\end{eqnarray*}

\end{theorem}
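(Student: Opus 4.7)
My plan follows the strategy of Alexeev--Mori \cite{AM04}. The problem reduces to establishing, for each of the three classes, a uniform upper bound on the anticanonical volume $K_X^2$ and on the Cartier index of $K_X$, after which a standard Matsusaka--Kollár-type effective very-ampleness step for $-mK_X$ together with a Hilbert scheme argument over $\Spec\mathbb{Z}$ will conclude the boundedness.

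The crucial new input for the imperfect setting is control of the irregularity $h^1(X,\mathcal{O}_X)$, which governs how non-geometrically-normal $X$ can be, and in particular the conductor of the normalisation of the geometric base change $X_{\bar k}$. For $\mathcal{X}_{\textup{dP},\textup{can}}$, \autoref{t-intro-irr} forces either $h^1(X,\mathcal{O}_X) = 0$ or the explicit, already-bounded case $K_X^2 \le 2$ in characteristic $2$; when $h^1 = 0$ the geometric fibre $X_{\bar k}$ is geometrically normal up to well-controlled corrections, so the classical bound $K_X^2 \le 9$ over an algebraically closed field applies, and since canonical surface singularities are Du Val, $K_X$ is Cartier and the index bound is trivial. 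For $\mathcal{X}_{\textup{dP},\varepsilon}^{\textup{tame}}$ and $\mathcal{X}_{\textup{dP},\varepsilon}^{>5}$, the vanishing of $h^1(X,\mathcal{O}_X)$ from \cite{BT22} plays the same role, reducing the bounds on $K_X^2$ and on the index of $K_X$ to the classical two-dimensional BAB theorem of Alexeev--Mori applied on $X_{\bar k}$, combined with Alexeev's index bound for klt surface singularities.

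Once these bounds are in place, a bounded $m_0$ produces a closed immersion $X \hookrightarrow \mathbb{P}^{N_0}$ of bounded degree and Hilbert polynomial, and the corresponding Hilbert scheme over $\Spec\mathbb{Z}$ is of finite type, which yields the three boundedness statements. The principal obstacle will be the step bounding $K_X^2$: although $K_X^2$ itself is preserved under base change, $X_{\bar k}$ need not be normal, and a non-trivial conductor on the normalisation $(X_{\bar k})^{\nu} \to X_{\bar k}$ can decouple the bound on $K_Y^2$ (where the classical results live) from $K_X^2 = K_{X_{\bar k}}^2$. This is precisely where \autoref{t-intro-irr} and the vanishing results of \cite{BT22} intervene, pinning down the irregularity and hence constraining the conductor. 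A secondary technical concern is to make the effective very-ampleness and the klt index bound uniform across positive characteristics in the $\varepsilon$-klt cases.
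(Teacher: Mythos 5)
Your high-level shape (bound the volume, bound the Cartier index, effective very ampleness, Hilbert scheme over $\Spec\mathbb{Z}$) does match the paper, but the mechanism you propose for the bounds contains a genuine gap. You assume that $h^1(X,\mathcal{O}_X)=0$ forces $X_{\bar k}$ to be ``geometrically normal up to well-controlled corrections'' so that the classical $K_X^2\le 9$ bound can be quoted. This is false: tameness means $h^1=0$, and over an imperfect field a geometrically integral canonical del Pezzo surface with $h^1(X,\mathcal{O}_X)=0$ can still have non-normal base change $X_{\bar k}$ (the conductor of the normalisation is then a $\mathbb{P}^1$, cf.\ \autoref{t-recap-reid}), and the correct anticanonical volume bound in this situation is $K_X^2\le 16$ in characteristic $2$ and $K_X^2\le 12$ in characteristic $3$ (\autoref{p-bound-volume-canonical-dP}), not $9$. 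These bounds are not obtained by reducing to the algebraically closed case; they come from analysing the Mori fibre space structure on a regular minimal model of $X$ over $k$ itself, together with the classification of normalised base changes (\autoref{t-recap}) and the bound $\ell_F(X/k)\le 1$ for geometrically integral surfaces (\autoref{c-length-frob}).

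The same issue undermines the $\varepsilon$-klt cases: $X_{\bar k}$ need not be normal, let alone $\varepsilon$-klt, so the classical Alexeev--Mori theorem cannot simply be applied to $X_{\bar k}$. The paper instead proves both the volume bound (\autoref{t-bounds-volume}) and the Cartier-index bound (via \autoref{lem: bounds_Gorenstein_index} and \autoref{prop: bounds_Picard}) directly over the possibly imperfect base field, using the Mori fibre space structure $\pi\colon Z\to B$ on a regular minimal model, the explicit intersection formulas of \autoref{lem: bound-self-intersection} which are sensitive to the residue field extensions $[k(\Gamma):k]$ and $[k(b):k]$, and a version of the Alexeev--Mori Picard-rank argument that deliberately avoids the classification of rational Mori fibre spaces over $\bar k$. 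Reduction to the algebraic closure is used only at the very end, once the Hilbert polynomial of $(X,\mathcal{O}_X(-nK_X))$ has already been pinned down, to invoke a general boundedness theorem for polarised schemes. You would need to replace your reduction-to-$\bar k$ step by this intrinsic analysis over $k$ for the proof to go through.
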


We briefly explain the organisation of the article. In \autoref{s-preliminaries}, we collect various results on geometry over imperfect fields and del Pezzo surfaces.
In \autoref{s-boundedness}, we generalise the main results of Tanaka \cite{Tan19} to the canonical case. We use Ekedahl's technique \cite{Eke88} on the construction of $\alpha$-torsor to show an effective Kodaira vanishing theorem (\autoref{p-bound-b&b}) from which we deduce that $\omega_X^{-12}$ is very ample (\autoref{c-12-v-ample}). 
Starting from \autoref{s-irregularity} we specialise to the study of geometrically integral del Pezzo surfaces. 
We show that the Frobenius length of geometric non-normality (an invariant introduced by Tanaka \cite{Tan21}) is at most 1 (\autoref{c-length-frob}) on normal Gorenstein del Pezzo surfaces, a result we use to find lower bounds on the dimension of the space of anti-pluricanonical sections.
We combine these estimates together with Maddock's bound \cite[Corollary 1.2.6]{Mad16} and a careful study of $\alpha$-torsors to prove \autoref{t-intro-irr}.
In \autoref{s-BAB} we apply our results to the BAB conjecture over arbitrary fields and we prove \autoref{t-intro-bdd}.
\\ 

\textbf{Acknowledgements.}
We would like to thank A.~Fanelli, S.~Filipazzi, J.~Waldron, and H.~Tanaka for useful discussions on the topic of this article.
This work started at EPFL and then continued at the University of Bonn during mutual visits. We would like to thank both universities for the support.
Part of this work was written while the first author was visiting  l'Université de Poitiers within the program “Chaire Ali\'{e}nor" of the Fédération MARGAUx: he would like to thank S. Boissière for the kind hospitality.
FB was partly supported by the grant $\#200021/169639$ from the
Swiss National Science Foundation.

\section{Preliminaries} \label{s-preliminaries}

\subsection{Notations} \label{sec: terminology}

\begin{enumerate}
    \item Given a field $k$, we denote by $\overline{k}$ (resp. $k^{\text{sep}}$) an algebraic (resp. separable) closure. We denote by $k^{1/p^{\infty}}$ the perfect closure of $k$.
    \item Given a field $k$, a scheme $X$ is a $k$-variety if it is an integral separated scheme of finite type over $k$.
    If $X$ has dimension 1 (resp. 2, 3), we say $X$ is a curve (resp. surface, 3-fold).
    \item Given a projective integral $k$-variety $X$, we let $d_X:=[H^0(X, \mathcal{O}_X):k]$.
    \item Given an $\mathbb{F}_p$-scheme $X$, we denote by $F \colon X \to X$ the absolute Frobenius morphism of $X$. 
    We say $X$ is $F$-finite if $F$ is a finite morphism.
    \item For an $F$-finite field $k$, its $p$-degree (or degree of imperfection) is defined as $\pdeg(k):=\log_{p}[k:k^p]$.
    \item  We say $(X, \Delta)$ is a \emph{pair} if $X$ is a normal $k$-variety, $\Delta$ is an effective $\mathbb{Q}$-divisor with coefficients in $[0, 1]$ and $K_X+\Delta$ is a $\mathbb{Q}$-Cartier divisor.
    \item For the definitions of the singularities of the MMP (as canonical, klt and log canonical), we refer to \cite[Definition 2.8]{kk-singbook}. 
    \item \label{item: terminology_conductor} Given an integral scheme $X$ with normalisation $\nu \colon Y \to X$, we denote by $\mathcal{I} \subset \mathcal{O}_X$ the conductor ideal (i.e. the annihilator of the $\mathcal{O}_X$-module $\nu_*(\mathcal{O}_Y)/\mathcal{O}_X)$).
    The corresponding closed subscheme $D \subset X$ is called the \emph{conductor scheme} of $\nu$.
    Note that $\mathcal{I}$ is also an ideal of $\mathcal{O}_Y$ and the corresponding subscheme $C \subset Y$ is called \emph{ramification locus} of $\nu$. 
    \item A projective morphism $f \colon X \to Y$ of normal schemes is a \emph{contraction} if $f_*\mathcal{O}_X=\mathcal{O}_Y$.
\end{enumerate}

\subsection{Geometric reducedness and normality}

We collect well-known results on the geometry of algebraic varieties, especially surfaces, defined over imperfect fields that we need in this article. 

\begin{definition}
    A $k$-variety $X$ is \emph{geometrically reduced} (resp. \emph{geometrically normal, geometrically regular}) if the base change $X_{\overline{k}}$ is reduced (resp. normal, regular).
\end{definition}

We recall Tate's base change formula for purely inseparable field extensions.

\begin{theorem}[{\cite[Theorem 1.1]{PW}}]
    Let $X$ be a normal $k$-variety such that $k$ is algebraically closed in $K(X)$. 
    Let $Y$ be the normalisation of the reduced scheme $(X \times_k \overline{k})_{\red}$ together with the natural morphism $f \colon Y \to X$. 
    Then there exists an effective divisor $C \geq 0$ such that 
    $K_Y+(p-1)C=f^*K_X.$
    If $X$ is geometrically integral, then $(p-1)C$ can be chosen to be the ramification divisor of $f$.
\end{theorem}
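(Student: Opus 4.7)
The plan is to reduce Tate's formula to the case of a single purely inseparable extension of degree $p$ and then perform a local computation at each codimension-one point of $Y$. Since $X$ is of finite type and $k$ is algebraically closed in $K(X)$ (in particular $X$ is geometrically irreducible), both the reduction and the normalisation of $X_{\overline{k}}$ descend to some finite purely inseparable extension $k'/k$; that is, $Y$ is already realised as the normalisation of $(X_{k'})_{\mathrm{red}}$. Filtering $k'/k$ as a tower $k = k_0 \subsetneq k_1 \subsetneq \cdots \subsetneq k_n = k'$ of degree-$p$ purely inseparable steps and inducting on $n$, it suffices to treat the case $[k':k] = p$, since pulling an effective divisor back through a further normalisation preserves effectivity and the contributions from successive stages add.

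In this case $f \colon Y \to X$ is a finite purely inseparable morphism of normal integral varieties of generic degree $p$. At each codimension-one point $y \in Y$ with image $x = f(y)$, the DVR extension $\mathcal{O}_{X,x} \hookrightarrow \mathcal{O}_{Y,y}$ satisfies $e_y f_y = p$, where $e_y$ is the ramification index and $f_y$ the residue degree. A local computation using the conormal sequence $f^{*}\Omega^1_{X/k} \to \Omega^1_{Y/k} \to \Omega^1_{Y/X} \to 0$ together with the standard description of the dualizing module of a degree-$p$ purely inseparable DVR extension (via Grothendieck duality, or concretely by expressing a generator of the local dualizing module in terms of a uniformiser and a chosen inseparable generator) produces a nonnegative integer $c_y$, vanishing exactly when $f$ is unramified at $y$, such that $(p-1) c_y \cdot \overline{\{y\}}$ is the local contribution to $f^{*}K_X - K_Y$ at $y$. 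Setting $C := \sum_y c_y \overline{\{y\}}$ then yields the global identity $K_Y + (p-1)C = f^{*}K_X$ with $C$ effective.

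When $X$ is geometrically integral, $(X_{\overline{k}})_{\mathrm{red}}$ is integral, so $Y$ is integral and $f$ is an honest finite purely inseparable morphism of integral normal $k$-varieties. Its ramification divisor, defined at each codimension-one point by the length of $\Omega^1_{Y/X}$ localised there, coincides with $(p-1)C$ by the local calculation above.

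The main obstacle is the local differential/duality computation, specifically extracting the uniform coefficient $(p-1)$ from any degree-$p$ purely inseparable DVR extension irrespective of how $e_y f_y = p$ splits between ramification and residue degree; one must verify that the Frobenius-type nature of the extension forces exactly this exponent in each case. A secondary subtlety is ensuring that effectivity is preserved through the tower argument, which relies on $k$ being algebraically closed in $K(X)$ so that normalisation and purely inseparable base change interact in the controlled way required for the inductive step.
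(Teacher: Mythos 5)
This theorem is quoted verbatim from \cite[Theorem 1.1]{PW}; the paper supplies no proof of it, so there is no internal argument to compare against.

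Judged on its own terms, your outline follows the standard route for Tate's formula — descend $\overline{k}/k$ to a finite purely inseparable $k'/k$, filter $k'/k$ into degree-$p$ steps, and localise at height-one primes — and the bookkeeping parts (pullbacks of effective divisors stay effective, discrepancies add along the tower) are fine. However, the proposal never actually establishes the single nontrivial fact that makes the theorem true: for a degree-$p$ purely inseparable extension of DVRs $A \subset B$ arising here, the coefficient of $f^*K_X - K_Y$ at the corresponding height-one prime is a \emph{non-negative multiple of} $p-1$. You flag this as ``the main obstacle,'' but naming the obstacle is not the same as overcoming it; as written, the heart of the theorem remains a black box. I would also push back on the tool you propose for it. The conormal sequence $f^*\Omega^1_{X/k} \to \Omega^1_{Y/k} \to \Omega^1_{Y/X} \to 0$ is ill-suited here: over an imperfect field a normal variety can fail to be smooth along a divisor, so at a height-one point $\Omega^1_{Y/k}$ need not be locally free of the expected rank and $\det\Omega^1$ need not compute $\omega_Y$. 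The correct input is Grothendieck duality for the finite flat degree-$p$ inclusion $A \subset B$, $\omega_{B/A} \cong \operatorname{Hom}_A(B,A)$, combined with a derivation $\partial$ of $B$ annihilating $A$; the restricted ($p$-Lie) structure of $\partial$ is what forces the exponent $p-1$ uniformly across the two cases $e=p,\,f=1$ and $e=1,\,f=p$. Finally, the inductive step silently assumes that ``$k_i$ is algebraically closed in $K(Y_i)$'' propagates up the tower; this needs to be checked, not taken for granted.
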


We start with the behaviour of geometric reducedeness under birational equivalence.

\begin{lemma}[{\cite[Lemma 2.2]{BT22}}] \label{l-geom-red}
Let $X$ and $Y$ be two $k$-birational varieties. Then $X$ is geometrically reduced over $k$ if
and only if $Y$ is geometrically reduced over $k$.
\end{lemma}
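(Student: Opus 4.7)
The plan is to reduce geometric reducedness to a property of the function field: for an integral $k$-variety $X$, $X$ is geometrically reduced over $k$ if and only if $K(X) \otimes_k \overline{k}$ is reduced, equivalently, if and only if $K(X)$ is a separable extension of $k$ in the sense of MacLane. Once this characterisation is in place, the lemma follows immediately from the fact that $k$-birational varieties share the same function field.

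First, I would establish the function-field criterion. The ``only if'' direction is straightforward: if $X_{\overline{k}}$ is reduced, then so is its localisation at every point; in particular, localising at the (unique) point of $X_{\overline{k}}$ lying above the generic point of $X$ produces $K(X) \otimes_k \overline{k}$, which is therefore reduced. For the ``if'' direction, I would use that reducedness is an open condition on the Noetherian scheme $X_{\overline{k}}$. Since the base change morphism $X_{\overline{k}} \to X$ is integral, every generic point of $X_{\overline{k}}$ maps to the generic point of $X$; the hypothesis that $K(X) \otimes_k \overline{k}$ is reduced then gives reducedness of $X_{\overline{k}}$ at all of its generic points, hence on a dense open subset that meets every irreducible component, hence everywhere.

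Finally, a $k$-birational equivalence between $X$ and $Y$ amounts to a $k$-algebra isomorphism $K(X) \cong K(Y)$. The function-field criterion applied to both sides then yields that $X$ is geometrically reduced iff $K(X) \otimes_k \overline{k}$ is reduced iff $K(Y) \otimes_k \overline{k}$ is reduced iff $Y$ is geometrically reduced. The only real content of the argument is the spreading-out step in the second paragraph, which is a standard feature of finite-type schemes; beyond that, there is no significant obstacle.
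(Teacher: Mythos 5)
Your overall strategy---reduce geometric reducedness to a property of the function field and then observe that birational varieties share function fields---is the standard one and is presumably what \cite{BT22} does; the ``only if'' direction of the function-field criterion and the final step are fine. However, the ``if'' direction as you have written it has a genuine gap. From openness of the reduced locus and reducedness at every generic point of $X_{\overline{k}}$ you conclude that $X_{\overline{k}}$ is reduced ``on a dense open subset \ldots\ hence everywhere.'' This last implication is false: a Noetherian scheme can be reduced on a dense open and still fail to be reduced. For example $\Spec\bigl(k[x,y]/(x^2,xy)\bigr)$ is reduced on the dense open $\{y\neq 0\}$, yet it is not reduced because of the embedded point at the origin. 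Your argument never invokes flatness of $X_{\overline{k}}\to X$ at the crucial step, and without flatness the implication ``reduced at generic points $\Rightarrow$ reduced'' simply does not hold.

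The cleanest way to close the gap avoids the openness argument entirely. Take any affine open $\Spec(A)\subseteq X$. Since $X$ is integral, $A\hookrightarrow K(X)$. Because $\overline{k}$ is flat (indeed free) over $k$, tensoring preserves this injection, so $A\otimes_k\overline{k}\hookrightarrow K(X)\otimes_k\overline{k}$. A subring of a reduced ring is reduced, so $A\otimes_k\overline{k}$ is reduced; as this holds for every affine open, $X_{\overline{k}}$ is reduced. (Alternatively, one can salvage your approach by arguing that flatness of $X_{\overline{k}}\to X$ over the integral scheme $X$ forces every associated point of $X_{\overline{k}}$ to lie over the generic point of $X$, so reducedness of $K(X)\otimes_k\overline{k}$ rules out embedded points and gives $(R_0)+(S_1)$; but the injectivity argument is shorter and does not require this machinery.)
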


Next, we note that geometric normality descends under birational contractions.
For the definition of the $(S_n)$-property we refer to \cite[\href{https://stacks.math.columbia.edu/tag/033Q}{Tag 033Q}]{stacks-project}.

\begin{proposition}\label{p-descend-geom-normal}
Let $\pi \colon X \to Y$ be a projective birational morphism of normal $k$-varieties. If $X$ is geometrically normal, so is $Y$.
\end{proposition}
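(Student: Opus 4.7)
The plan is to show that the normalization map of $Y_{\overline{k}}$ is an isomorphism, by sandwiching its pushforward between $\mathcal{O}_{Y_{\overline{k}}}$ and $(\pi_{\overline{k}})_*\mathcal{O}_{X_{\overline{k}}}$ and using that these two sheaves coincide.

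The first step is to note that $\pi_*\mathcal{O}_X = \mathcal{O}_Y$: since $\pi$ is projective and birational, $\pi_*\mathcal{O}_X$ is a coherent $\mathcal{O}_Y$-subalgebra of the function field $K(Y)$, and normality of $Y$ forces equality. By flat base change along $\Spec \overline{k} \to \Spec k$, this gives $(\pi_{\overline{k}})_*\mathcal{O}_{X_{\overline{k}}} = \mathcal{O}_{Y_{\overline{k}}}$ for the base change $\pi_{\overline{k}} \colon X_{\overline{k}} \to Y_{\overline{k}}$. Since $X_{\overline{k}}$ is normal, hence reduced, and pushforward preserves reducedness, $Y_{\overline{k}}$ is a reduced scheme of finite type over $\overline{k}$.

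I would then consider the normalization $\nu \colon Z \to Y_{\overline{k}}$, which is finite because $Y_{\overline{k}}$ is reduced and excellent. The isomorphism locus of $\pi$ is a dense open $U \subset Y$, and by flatness $U_{\overline{k}}$ remains dense in $Y_{\overline{k}}$ with $\pi_{\overline{k}}$ still an isomorphism over it. In particular, the (disjoint) irreducible components of the normal scheme $X_{\overline{k}}$ are in natural birational bijection with the components of $Y_{\overline{k}}$, so the universal property of normalization produces a factorization $\pi_{\overline{k}} = \nu \circ \tilde\pi$ for some morphism $\tilde\pi \colon X_{\overline{k}} \to Z$. Pushing forward yields the chain
\[
\mathcal{O}_{Y_{\overline{k}}} \subseteq \nu_*\mathcal{O}_Z \subseteq \nu_*\tilde\pi_*\mathcal{O}_{X_{\overline{k}}} = (\pi_{\overline{k}})_*\mathcal{O}_{X_{\overline{k}}} = \mathcal{O}_{Y_{\overline{k}}},
\]
forcing $\nu_*\mathcal{O}_Z = \mathcal{O}_{Y_{\overline{k}}}$. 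Since $\nu$ is finite and surjective, this will force $\nu$ to be an isomorphism, and therefore $Y_{\overline{k}} \cong Z$ is normal.

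The most delicate point will be the application of the universal property of normalization when $Y_{\overline{k}}$ may be reducible: one needs to verify that each normal integral component of $X_{\overline{k}}$ dominates a distinct irreducible component of $Y_{\overline{k}}$, which follows from the density of $U_{\overline{k}}$ in $Y_{\overline{k}}$ and the fact that base change preserves the isomorphism locus of $\pi$.
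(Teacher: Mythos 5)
Your proof is correct, and it is a genuinely different argument from the one in the paper. The paper's proof is purely local: it uses Serre's criterion (normality $=(R_1)+(S_2)$), notes that the $(S_2)$ property for $Y$ is equivalent to $(S_2)$ for $Y_{\overline{k}}$ by faithfully flat descent, and then observes that since $Y$ is normal, the birational morphism $\pi$ is an isomorphism over codimension-one points of $Y$; thus any failure of geometric $(R_1)$ at a height-one point of $Y$ would immediately propagate to $X$. Your approach is instead global: you identify $\mathcal{O}_{Y_{\overline{k}}}$ with $(\pi_{\overline{k}})_*\mathcal{O}_{X_{\overline{k}}}$ via $\pi_*\mathcal{O}_X=\mathcal{O}_Y$ and flat base change, then sandwich the normalization algebra $\nu_*\mathcal{O}_Z$ between $\mathcal{O}_{Y_{\overline{k}}}$ and this pushforward. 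The paper's argument is shorter and avoids invoking the universal property of normalization, the finiteness of normalization, and the bookkeeping of irreducible components of $X_{\overline{k}}$ and $Y_{\overline{k}}$. What your route buys is a direct exhibition of the normalization map as an isomorphism, and it makes visible that the argument really only needs $\pi$ to be a contraction (\emph{i.e.}~proper with $\pi_*\mathcal{O}_X = \mathcal{O}_Y$); the paper's local route is also characteristic-free but is specifically tailored to the fact that $(R_1)$ is a codimension-one condition.

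One point worth spelling out more carefully in your "delicate point": the reason each component of $X_{\overline{k}}$ dominates a distinct component of $Y_{\overline{k}}$ is not merely density of $U_{\overline{k}}$, but that the generic points of both $X_{\overline{k}}$ and $Y_{\overline{k}}$ correspond to the minimal primes of $K(X)\otimes_k\overline{k} = K(Y)\otimes_k\overline{k}$, and $\pi_{\overline{k}}$ is the identity on this ring; this gives the bijection on generic points outright and also shows $\tilde\pi$ is dominant, which you need for the inclusion $\mathcal{O}_Z\subseteq\tilde\pi_*\mathcal{O}_{X_{\overline{k}}}$.
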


\begin{proof}
Recall that a variety $X$ over $k$ has the property $(S_n)$ if and only if $X_{\overline{k}}$ also has, by faithfully flat descent. 
As $Y$ is $(S_2)$, by Serre's criterion \cite[\href{https://stacks.math.columbia.edu/tag/031S}{Tag 031S}]{stacks-project} $Y$ is geometrically normal if and only if it is geometrically $(R_1)$.
Suppose by contradiction that there exists a codimension 1 point $\eta \in Y$ such that the localisation $\mathcal{O}_{Y, \eta}$ is not geometrically regular. 
As $Y$ is normal, $\pi$ is an isomorphism over codimension 1 points of $Y$ and thus $X$ is not geometrically $(R_1)$, reaching the contradiction.
\end{proof}

We discuss singularities of the MMP over imperfect fields.

\begin{definition} \label{def: geometrically-canonical}
 Let $(X,\Delta)$ be a pair over $k$ such that $k$ is algebraically closed in $K(X)$.
 We say it is \emph{geometrically canonical} (resp.~ klt, log canonical), if the base change $(X_{\overline{k}}, \Delta_{\overline{k}})$ is so.
\end{definition}

In particular, note that geometrically log canonical implies geometrically normal. If $X$ is geometrically canonical (resp. klt, lc), then $X$ is also canonical (resp. klt, lc) by \cite[Proposition 2.3]{BT22}.
We now specialise to the case of surfaces. Recall that the existence of resolution of singularities for excellent surfaces has been proven in \cite{Lip78}.

\begin{proposition}\label{p-klt-rational}
 Let $X$ be the spectrum of a local excellent ring $(R,\mathfrak{m})$ with closed point $x$.
If $(X, \Delta)$ is a klt surface pair for some $\Delta \geq 0$, then $X$ has rational and $\mathbb{Q}$-factorial singularities.
Therefore, if two projective $k$-surfaces $X$ and $Y$ with klt singularies are $k$-birational, then $H^i(X, \mathcal{O}_X) \simeq H^i(Y, \mathcal{O}_Y)$ for every $i \geq 0$.
\end{proposition}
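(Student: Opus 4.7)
The proposition splits into a local statement on the singularities of excellent klt surface pairs and a global consequence about birational invariance of coherent cohomology.

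For the local part, the plan is to pick a log resolution $f\colon Y \to X$ of $(X, \Delta)$, which exists by Lipman's theorem \cite{Lip78}, and to write the usual discrepancy equation
\[
K_Y + \widetilde{\Delta} \sim_{\mathbb{Q}} f^*(K_X + \Delta) + \sum_i a_i E_i,
\]
where $\widetilde{\Delta}$ is the strict transform and the $E_i$ are the $f$-exceptional prime divisors. The klt hypothesis gives $a_i > -1$ for every $i$, and the negativity lemma applied to the exceptional combination on the right-hand side yields the required positivity. Rationality of $X$ amounts to the vanishing $R^1f_*\mathcal{O}_Y = 0$, which via formal functions reduces, on a surface, to bounding the arithmetic genera of effective combinations of the exceptional divisors. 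Combined with adjunction on each exceptional component, the klt bound forces every exceptional curve to be smooth rational over its residue field and the dual graph to be a tree; this is the standard classification of klt surface singularities, see \cite{kk-singbook}. $\mathbb{Q}$-factoriality is then a classical theorem of Lipman: the local divisor class group of a $2$-dimensional rational singularity is finite.

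For the global statement, I would take a common log resolution $\pi\colon Z \to X$ and $\sigma\colon Z \to Y$, obtained by resolving the closure of the graph of the $k$-birational map via \cite{Lip78}. Because both $X$ and $Y$ have rational singularities by the first part, one has $R\pi_*\mathcal{O}_Z \simeq \mathcal{O}_X$ and $R\sigma_*\mathcal{O}_Z \simeq \mathcal{O}_Y$, and the Leray spectral sequences degenerate to give
\[
H^i(X, \mathcal{O}_X) \simeq H^i(Z, \mathcal{O}_Z) \simeq H^i(Y, \mathcal{O}_Y)
\]
for every $i \geq 0$, as desired.

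The main obstacle is proving $R^1 f_* \mathcal{O}_Y = 0$ in positive characteristic, where Kawamata--Viehweg vanishing is not available. The saving grace is that on surfaces this vanishing is purely a matter of bounding arithmetic genera of exceptional cycles via adjunction and the negativity lemma, so the argument is formal and passes unchanged to the excellent setting; likewise, Lipman's results on rational surface singularities are formulated and proved over arbitrary excellent two-dimensional bases, so no further input is required.
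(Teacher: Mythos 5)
Your proof is correct and follows essentially the same route as the paper: the paper cites \cite[Proposition 2.28]{kk-singbook} for rationality of klt surface singularities (which you unpack via Artin's criterion and the classification of klt exceptional dual graphs), \cite[Proposition 17.1]{Lip69} for $\mathbb{Q}$-factoriality (finiteness of the local class group of a rational surface singularity), and a common resolution with Leray for the cohomological invariance. The extra detail you supply on the rationality argument is accurate but not a different proof — it is what the cited references do.
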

\begin{proof}
Rationality of klt surface singularities follows from \cite[Proposition 2.28]{kk-singbook} and $\mathbb{Q}$-factoriality of rational singularities is proven in \cite[Proposition 17.1]{Lip69}. 
The last statement is obvious by considering a common resolution of $X$ and $Y$.
\end{proof}

\begin{corollary}\label{cor: surface_canonical_rational}
    Let $(x \in X)$ be a Gorenstein normal surface singularity.
    Then $X$ is canonical if and only if it is rational.
\end{corollary}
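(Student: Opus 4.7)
The plan is to handle the two implications separately.

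The forward implication is immediate from \autoref{p-klt-rational}: canonical singularities are klt (all discrepancies are $\geq 0 > -1$), and klt surface singularities are rational.

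For the converse, I would pass to the minimal resolution $\pi \colon Y \to X$, which exists by \cite{Lip78} and can be chosen with $K_Y$ being $\pi$-nef (by successive contractions of $(-1)$-curves via Castelnuovo). Gorenstein-ness of $X$ ensures that $K_{Y/X} := K_Y - \pi^* K_X$ is an \emph{integral} $\pi$-exceptional divisor, and since $\pi^* K_X \cdot E_i = 0$ for every exceptional curve $E_i$, the divisor $K_{Y/X}$ inherits $\pi$-nefness from $K_Y$. The negativity lemma (i.e.\ Mumford's negative-definiteness of the intersection form on the exceptional locus combined with the non-negativity of its off-diagonal entries) then forces $-K_{Y/X}$ to be effective, so one may write $K_{Y/X} = -D$ with $D \geq 0$ effective and $\pi$-exceptional.

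The goal thus reduces to showing $D = 0$. Supposing for contradiction that $D > 0$, a Riemann--Roch computation on the smooth surface $Y$, using $D + K_Y = \pi^* K_X$ and the projection formula for the exceptional divisor $D$, gives
\[
\chi(\mathcal{O}_D) \;=\; -\tfrac{1}{2}\, D \cdot (D + K_Y) \;=\; -\tfrac{1}{2}\, D \cdot \pi^* K_X \;=\; 0.
\]
On the other hand, applying $\pi_*$ to the short exact sequence $0 \to \mathcal{O}_Y(-D) \to \mathcal{O}_Y \to \mathcal{O}_D \to 0$ and using $R^1 \pi_* \mathcal{O}_Y = 0$ (rationality of $X$) together with $R^2 \pi_* = 0$ (fiber dimension at most one) yields $H^1(D, \mathcal{O}_D) = 0$ from the associated long exact sequence. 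Since $D$ is a non-empty scheme, $h^0(D, \mathcal{O}_D) > 0$, and therefore $\chi(\mathcal{O}_D) > 0$. This contradicts the Riemann--Roch calculation, so $D = 0$; the minimal resolution is crepant and $X$ is canonical.

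The principal point requiring care is the existence of a minimal resolution with $K_Y$ being $\pi$-nef for excellent normal surface singularities over an arbitrary (not necessarily perfect) base field. This rests on a Castelnuovo-type contractibility result for regular excellent surfaces and is well-known, but should be cited precisely in the final write-up; the remainder of the argument is a brief Riemann--Roch computation combined with the definition of rationality.
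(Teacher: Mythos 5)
Your proof is correct, and the converse direction follows a genuinely different route from the paper's. The paper simply cites two facts from Koll\'ar--Kov\'acs: for rational Gorenstein $X$ and any resolution $f\colon Y\to X$, one has $f_*\omega_Y = \omega_X$ [kk-singbook, Prop.\ 2.77], and $f_*\omega_Y = \omega_X$ with $K_X$ Cartier forces $X$ to be canonical [kk-singbook, Claim 2.3.1]. Your argument instead works on the \emph{minimal} resolution $\pi\colon Y\to X$: Gorenstein-ness makes the relative canonical $K_{Y/X}$ an integral exceptional divisor, minimality plus the negativity lemma give $K_{Y/X} = -D$ with $D\ge 0$, and the contradiction for $D>0$ is a Riemann--Roch computation $\chi(\mathcal{O}_D) = -\tfrac12 D\cdot\pi^*K_X = 0$ pitted against the vanishing $H^1(D,\mathcal{O}_D)=0$ coming from $R^1\pi_*\mathcal{O}_Y = 0$ and $R^2\pi_* = 0$ — this is essentially Artin's numerical criterion for rational singularities applied to the anti-discrepancy cycle. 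What the two approaches buy: yours is more elementary and self-contained, needing only intersection theory on the exceptional fiber; the paper's is shorter and works with an arbitrary resolution, so it sidesteps the need to invoke the existence of a minimal resolution with $\pi$-nef canonical over an arbitrary excellent base (a real but standard subtlety that you correctly flag as the point needing a precise reference to Lipman).
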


\begin{proof}
If $X$ is canonical, then it is rational by \autoref{p-klt-rational}.
Suppose now that $X$ is rational and let $f \colon Y \to X$ be a resolution of singularities. 
As $X$ is Gorenstein and $X$ has rational singularities, we have that $f_*\omega_Y = \omega_X$ by \cite[Proposition 2.77]{kk-singbook}, which in turn implies that $X$ has canonical singularities by \cite[Claim 2.3.1]{kk-singbook}.
\end{proof}

\subsection{Del Pezzo surfaces}

In this subsection, we collect some terminology on del Pezzo surfaces and recall previously known results. 

\begin{definition}
We say $X$ is a \emph{Gorenstein} (resp. \emph{canonical}, \emph{regular}) \emph{del Pezzo surface} over $k$ if $X$ is a reduced $k$-projective Gorenstein (resp.~ canonical, regular) surface with $H^0(X, \mathcal{O}_X)=k$ and $\omega_X^{-1}$ is ample.
We say $X$ is a \emph{weak} del Pezzo if $\omega_X^{-1}$ is big and nef.
\end{definition}

We recall the classification of Gorenstein normal del Pezzo surfaces over algebraically closed fields:

\begin{proposition}[{\cite[Theorem 2.2]{HW81}}] \label{l-HW}
Let $X$ be a normal Gorenstein del Pezzo surface over an algebraically closed field $k$. 
Then one of the following holds:
\begin{enumerate}
    \item $X$ is a canonical del Pezzo surface and the explicit list is described in \cite[Section 8]{Dol12}, or %\cite[Theorem 3.4]{HW81}
    \item the minimal resolution $Z \to X$ is a ruled surface of the form $\mathbb{P}_E(\mathcal{O}_E \oplus \mathcal{L})$, where $E$ is an elliptic curve and $\deg \mathcal{L}<0$. The surface $X$ is obtained by contracting the negative section of $Z$.
\end{enumerate}
\end{proposition}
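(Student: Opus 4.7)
My plan is to analyse $X$ via its minimal resolution of singularities $\pi \colon Z \to X$. Write $K_Z = \pi^* K_X - D$, where $D \geq 0$ is $\pi$-exceptional and $D = 0$ precisely when $X$ has only canonical singularities. Since $-K_X$ is ample, $-\pi^* K_X$ is nef and big; in particular $-K_Z = -\pi^* K_X + D$ is big, so $Z$ has Kodaira dimension $-\infty$ and $K_Z$ is not pseudo-effective.

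If $D = 0$, then $-K_Z$ is itself nef and big, so $Z$ is a smooth weak del Pezzo surface. The classical birational classification of such surfaces shows that $Z$ is a blow-up of either $\mathbb{P}^2$ or a Hirzebruch surface $\mathbb{F}_n$ at points in almost general position, and contracting the resulting configurations of $(-2)$-curves produces precisely the canonical del Pezzo surfaces listed in \cite[Section~8]{Dol12}, giving case~(1).

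If $D \neq 0$, then by \autoref{cor: surface_canonical_rational} the surface $X$ has a non-rational Gorenstein singularity. One first establishes $\chi(X, \mathcal{O}_X) = 1$, using the ampleness of $-K_X$ and vanishing on a suitable resolution, together with Serre duality to kill $h^2(\mathcal{O}_X)$. The Leray spectral sequence applied to $\pi$ yields $\chi(Z, \mathcal{O}_Z) = 1 - \ell$, where $\ell := \mathrm{length}(R^1\pi_* \mathcal{O}_Z) \geq 1$. Combined with $\kappa(Z) = -\infty$, the Enriques--Kodaira classification of smooth projective surfaces forces $Z$ to be birationally ruled over a smooth curve $B$ with $q(Z) = g(B) \geq 1$, and a numerical comparison using $\chi(\mathcal{O}_Z) \leq 0$ then pins down $\ell = 1$, so $X$ has a unique non-rational singularity whose exceptional divisor has arithmetic genus $1$.

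To conclude, I would argue $g(B) = 1$ by observing that an elliptic component of the exceptional fibre of $\pi$ must dominate $B$ via the ruling, whence $B =: E$ is an elliptic curve. After a standard Atiyah-type normalisation, the minimal ruled model of $Z$ has the form $\mathbb{P}_E(\mathcal{O}_E \oplus \mathcal{L})$. The main obstacle is the last step: I must rule out extra $(-1)$-curves on $Z$ to show that $Z$ itself (and not a proper blow-up) already equals this $\mathbb{P}^1$-bundle, and identify the $\pi$-exceptional locus with a single negative section. Minimality of $\pi$ forbids $(-1)$-curves inside the exceptional divisor, and a numerical analysis using the fact that $-\pi^* K_X \cdot C \geq 0$ with equality if and only if $C$ is $\pi$-exceptional, combined with the adjunction formula on the fibres of the ruling, should forbid non-exceptional $(-1)$-curves as well. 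Ampleness of $-K_X$ on the contraction then pins down $\deg \mathcal{L} < 0$, completing case~(2).
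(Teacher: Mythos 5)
The paper cites \cite[Theorem 2.2]{HW81} for this proposition and does not reprove it, so there is no internal proof to compare against; I will assess your sketch on its own terms. Your overall strategy — split on whether the discrepancy divisor $D$ vanishes, reduce the canonical case to weak del Pezzo surfaces, and in the non-canonical case use the Leray spectral sequence for the minimal resolution together with the Enriques--Bombieri--Mumford classification to land on a ruled surface — is indeed the natural one and is essentially what Hidaka and Watanabe do. Two remarks on intermediate steps: you can avoid appealing to any Kodaira-type vanishing (which is delicate in positive characteristic) for $\chi(\mathcal O_X)=1$, since $h^0(\mathcal O_X)=1$ and $h^2(\mathcal O_X)=h^0(\omega_X)=0$ already give $\chi(\mathcal O_X)\leq 1$, which is all you use; and $\chi(\mathcal O_Z)\leq 0$ then forces $q(Z)=g(B)\geq 1$ as you say.

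The genuine gap is in the step ``an elliptic component of the exceptional fibre of $\pi$ must dominate $B$.'' A non-rational Gorenstein normal surface singularity with $p_g=1$ is a minimally elliptic singularity in the sense of Laufer, and its minimal resolution need not contain a smooth elliptic curve: the exceptional divisor can be a single rational curve with a node or cusp, or a cycle of smooth rational curves (the cusp and triangle singularities). In those cases every irreducible exceptional component is rational, so none of them dominates $B$; instead they sit inside fibres of the ruling, and your argument for $g(B)\leq 1$ breaks down. To close this you must \emph{exclude} the cusp-type configurations using the del Pezzo hypothesis, not assume them away. Concretely: if every $E_i$ were contained in a fibre, the whole fundamental cycle $D$ of the minimally elliptic singularity would be a sub-curve of a degenerate fibre of the ruling, hence would be a tree of rational curves, contradicting $p_a(D)=1$ (a tree has arithmetic genus $0$). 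This forces some $E_i$ to dominate $B$, and since that $E_i$ is a component of a cycle of arithmetic genus $1$ it has $p_a(E_i)\leq 1$, whence $g(B)\leq 1$ and then $g(B)=1$; from there one also deduces $\ell=1$ rather than reading $\ell=1$ off a numerical comparison as you propose, since $\chi(\mathcal O_Z)\leq 0$ alone only gives $g(B)\geq \ell\geq 1$, not $\ell=1$.

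Your final paragraph correctly identifies the remaining work (ruling out extra $(-1)$-curves and establishing that the bundle is decomposable of the form $\mathbb P_E(\mathcal O_E\oplus\mathcal L)$ with $\deg\mathcal L<0$), but the sketch stops short of actually carrying it out. These steps are non-trivial: a $(-1)$-curve $C$ on $Z$ is never $\pi$-exceptional, and the relation $-K_Z=\pi^*(-K_X)+D$ together with $(-K_Z)\cdot C=1$ forces $D\cdot C=0$ and $\pi_*C\cdot(-K_X)=1$, which is consistent with their existence; you must use that contracting such $C$ would produce a smaller normal Gorenstein del Pezzo dominated by $X$, and that ampleness of $-K_X$ on $\mathbb P_E(\mathcal E)$ with $E$ elliptic forces $\mathcal E$ split. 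As written the proposal names the obstacle but does not resolve it.
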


In \cite[Theorem 3.3]{BT22}, it is shown that canonical del Pezzo surfaces which are geometrically normal are geometrically canonical. 
We present a different proof of this result relying on \autoref{l-HW} and the following observation: 

\begin{lemma} \label{l-lc-rational}
Let $(y \in Y)$ be a geometrically log canonical surface singularity over $k$.
Suppose that $Y$ has rational singularities.
Then $Y_{\overline{k}}$ has rational singularities.
\end{lemma}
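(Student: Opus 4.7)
The plan is to push the rationality of $Y$ across the base change $k \subset \overline{k}$ via flat base change applied to a resolution. Choose a resolution $f\colon Z \to Y$, which exists by Lipman's work on excellent $2$-dimensional schemes; rationality of $Y$ gives $R^i f_* \mathcal{O}_Z = 0$ for all $i>0$. Since $Y$ is geometrically log canonical it is, in particular, geometrically normal, so $Y_{\overline{k}}$ is normal; integrality of $Y$ and $Y_{\overline{k}}$ follows from $k$ being algebraically closed in $K(Y)$, as is always assumed for a pair. Consequently $f_{\overline{k}}\colon Z_{\overline{k}} \to Y_{\overline{k}}$ is a proper birational morphism, and flat base change along $\Spec \overline{k} \to \Spec k$ yields
\[
R^i (f_{\overline{k}})_* \mathcal{O}_{Z_{\overline{k}}} \;\cong\; R^i f_* \mathcal{O}_Z \otimes_k \overline{k} \;=\; 0 \qquad \text{for all } i > 0.
\]

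If $Z_{\overline{k}}$ is itself regular, this already identifies $f_{\overline{k}}$ as a resolution of $Y_{\overline{k}}$ whose higher direct images of the structure sheaf vanish, so $Y_{\overline{k}}$ has rational singularities and we are done. Otherwise one further resolves by $h\colon W \to Z_{\overline{k}}$, sets $g := f_{\overline{k}} \circ h$, and feeds the above vanishing into the Leray spectral sequence for $g$, obtaining the identification $R^1 g_* \mathcal{O}_W \cong (f_{\overline{k}})_* R^1 h_* \mathcal{O}_W$. Rationality of $Y_{\overline{k}}$ is then reduced to rationality of whatever singularities $Z_{\overline{k}}$ may have acquired from the base change.

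The main obstacle is this last reduction: in positive characteristic a regular $k$-scheme need not be geometrically regular, so inseparable base change can introduce new singularities on $Z_{\overline{k}}$. To handle it I would exploit that $K(Z) = K(Y)$ is separable over $k$ (by geometric normality of $Y$), so $Z$ is smooth on a dense open and the non-smooth locus is proper and closed. Combined with the geometric log canonicity of $Y$, which forces the new singularities on $Z_{\overline{k}}$ that lie over the exceptional locus of $f_{\overline{k}}$ to be at worst log canonical, the classification of lc surface singularities (where simple elliptic and cusp are the only non-rational possibilities) should allow me to rule out non-rational behaviour and conclude that $R^1 h_* \mathcal{O}_W$ vanishes, closing the argument.
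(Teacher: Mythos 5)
Your approach is genuinely different from the paper's, which uses a Picard-group argument due to Lipman: rationality of $Y$ forces $\Pic$ of the punctured spectrum to be finite, $\mathbb{Q}$-factoriality and the lc hypothesis force $\Pic(U_{\bar k})$ to be torsion, and then the surjectivity of $\Pic^0(W) \to \Pic^0(E)$ for the minimal resolution $W \to Y_{\bar k}$ contradicts the non-torsion group $\Pic^0(E) \cong k^*$ or $E(k)$ that arises when $E$ is elliptic, nodal, or a cycle of $\mathbb{P}^1$'s. Your strategy instead tries to transport the vanishing $R^1 f_* \mathcal{O}_Z = 0$ across the base change via flat base change and Leray.

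The flat-base-change step and the Leray reduction are fine in themselves, and you correctly identify that the whole difficulty is concentrated in the failure of geometric regularity of $Z$. But the last paragraph, which is where all the work has to happen, is a gap rather than a proof. You claim that geometric log canonicity of $Y$ "forces the new singularities on $Z_{\bar k}$ \ldots to be at worst log canonical" and then that the classification of lc surface singularities lets you "rule out non-rational behaviour". Neither of these steps is justified, and the second is essentially the statement you are trying to prove. Concretely: $Y_{\bar k}$ being lc does not formally imply $Z_{\bar k}$ is lc, since the discrepancy of $Z_{\bar k}$ along an exceptional divisor $F$ of a further resolution is $b_F = a(F, Y_{\bar k}) - \sum_k c_k\, m_{kF}$ where $c_k$ are the discrepancies of $Y_{\bar k}$ along the exceptional divisors of $f_{\bar k}$ and $m_{kF} \geq 0$; if any $c_k > 0$ this can push $b_F$ below $-1$. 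And even granting lc, there is no argument here excluding simple elliptic or cusp singularities on $Z_{\bar k}$ — that is exactly the kind of non-rational lc singularity the lemma is designed to rule out on $Y_{\bar k}$, and you have not explained what feature of $Z_{\bar k}$ (as a base change of a regular $Z$ with separable function field) prevents it. You have relocated the problem from $Y_{\bar k}$ to the blow-up $Z_{\bar k}$ without decreasing its difficulty. There is also an unaddressed secondary issue: your Leray identification $R^1 g_* \mathcal{O}_W \cong (f_{\bar k})_* R^1 h_* \mathcal{O}_W$ requires $h_* \mathcal{O}_W = \mathcal{O}_{Z_{\bar k}}$, i.e.\ normality of $Z_{\bar k}$, which is a genuine hypothesis on $Z$ (geometric normality, not just regularity) that you have not established.
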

\begin{proof}
We can suppose $k$ is separably closed and $Y$ is the spectrum of a local henselian ring $(R, \mathfrak{m})$  by the existence of resolution of singularities \cite{Lip78}.
Let $U \coloneqq \Spec(R) \setminus \left\{\mathfrak{m} \right\}$ be the punctured spectrum. 
Since $Y$ is rational, the group $\Pic(U)$ is finite by \cite[Proposition 17.1]{Lip69}.
Therefore also $X:=Y_{\overline{k}}$ is $\mathbb{Q}$-factorial by \cite[Lemma 2.5]{Tan18a} and thus $\Pic(U_{\overline{k}})$ is a torsion group. 
Let $f \colon W \to X$ be the minimal resolution with exceptional divisor $E=\sum_{i=1}^n E_i$.
As defined in \cite{Lip69}, $\Pic^0(W)$ is the group of line bundles $L$ on $W$ such that $L \cdot E_i=0$ for every $i$ and there is an exact sequence of groups $0 \to \Pic^0(W) \to \Pic(W) \to \bigoplus \mathbb{Z}[E_i] \to 0$. 
By \cite[Proposition 14.4]{Lip69}, $\Pic^0(W)$ embeds into $\Pic(U_{\overline{k}})$ and thus we deduce it is a torsion group.
%we have the following commutative diagram
%\begin{equation}
%\begin{tikzcd}
%    0 \ar[r] & \Pic^0(W) \ar[r] \ar[d] & \Pic(W) \ar[r] \ar[d] & \mathbb{Z}^n \ar[r] \ar[d] & 0 \\
%    0 \ar[r] & \Pic^0(E) \ar[r] & \Pic(E) \ar[r] & \mathbb{Z}^n  \ar[r] & 0, \\ 
%\end{tikzcd}

%\end{equation}

Suppose now by contradiction that $X$ is not rational. By the classification of log canonical singularities \cite[Corollary 3.39]{kk-singbook}, the exceptional divisor $E$ is either an elliptic curve, a nodal curve or a circle of smooth rational curves. 
In the first case $\Pic^0(E)\simeq E(k)$, while in the latter cases $\Pic^0(E) \simeq k^*$ by \cite[Chapter 9.3, Corollary 11 and 12]{BLR} and since $h^1(E,\mathcal{O}_E) = 1$. By \cite[Lemma 14.3]{Lip69}, the restriction map $\Pic(W) \to \Pic(E)$ is surjective. 
Considering the exact sequence $0 \to \Pic^0(E) \to \Pic(E) \to \mathbb{Z}^{n} \to 0$, we can deduce that the map $\Pic^0(W) \to \Pic^0(E)
$ is surjective.
This is a contradiction as $\Pic^0(W)$ is torsion while $k^*$ and $E(k)$ are not.
\end{proof}

\begin{proposition} \label{prop: geom-normal-canonical-dP}
Let $X$ be a canonical del Pezzo surface. 
If $X$ is geometrically normal, then it is geometrically canonical.
\end{proposition}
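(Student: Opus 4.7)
The plan is to combine the Hidaka--Watanabe classification (\autoref{l-HW}) of normal Gorenstein del Pezzo surfaces over $\overline{k}$ with the descent-of-rationality statement \autoref{l-lc-rational}, using the latter to rule out the only non-canonical case of the former.

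First I would verify that $X_{\overline{k}}$ is a normal Gorenstein del Pezzo surface over $\overline{k}$. Normality holds by hypothesis; the Gorenstein property, the ampleness of $\omega_{X_{\overline{k}}}^{-1}$, and the equality $H^0(X_{\overline{k}}, \mathcal{O}_{X_{\overline{k}}}) = \overline{k}$ (which, combined with normality, also yields irreducibility) all follow by flat base change from the corresponding properties of $X$. By \autoref{l-HW}, either $X_{\overline{k}}$ has canonical singularities, in which case we are done, or its minimal resolution is of the form $Z = \mathbb{P}_E(\mathcal{O}_E \oplus \mathcal{L})$ with $E$ an elliptic curve and $\deg \mathcal{L} < 0$, and $X_{\overline{k}}$ is obtained by contracting the negative section.

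Assume we are in the second case. The exceptional divisor of the resolution is an elliptic curve, and a standard adjunction computation on $Z$ shows that the discrepancy of this divisor equals $-1$, so $X_{\overline{k}}$ has a simple elliptic singularity, which is log canonical but not rational. In particular, in both alternatives of \autoref{l-HW} the base change $X_{\overline{k}}$ is log canonical, i.e.\ $X$ is geometrically log canonical in the sense of \autoref{def: geometrically-canonical}. On the other hand, $X$ being canonical, it has rational singularities by \autoref{cor: surface_canonical_rational}. Then \autoref{l-lc-rational} applies to $X$ and forces $X_{\overline{k}}$ to have rational singularities, contradicting the elliptic alternative. Hence only case (1) of \autoref{l-HW} can occur, and $X_{\overline{k}}$ is canonical.

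The crucial step is the verification that case (2) of Hidaka--Watanabe still produces a log canonical (though not canonical, not even klt) singularity; this is what makes \autoref{l-lc-rational} applicable and turns the hypothesis that $X$ is canonical into a rationality statement for $X_{\overline{k}}$. Everything else is formal base-change bookkeeping combined with the two cited results.
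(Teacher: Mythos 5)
Your proof is correct and takes essentially the same route as the paper: reduce via Hidaka--Watanabe (\autoref{l-HW}) to the observation that $X_{\overline{k}}$ is log canonical, invoke \autoref{l-lc-rational} to transfer rationality to $X_{\overline{k}}$, and conclude via the Gorenstein-plus-rational characterization of canonical surface singularities (\autoref{cor: surface_canonical_rational}). The only difference is organizational: you frame the last step as a contradiction against the elliptic-cone alternative, whereas the paper applies \autoref{cor: surface_canonical_rational} directly to $X_{\overline{k}}$ without inspecting which alternative of \autoref{l-HW} occurs.
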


\begin{proof}
By \autoref{l-HW}, $X$ is geometrically log canonical. 
As $X$ has rational singularities, $X$ is geometrically rational by \autoref{l-lc-rational}. As $X$ is Gorenstein, we conclude that $X$ is geometrically canonical by \autoref{cor: surface_canonical_rational}.
\end{proof}

We now recall the results of Reid on the classification of non-normal Gorenstein del Pezzo surfaces \cite{Rei94}. 
We fix some notations we will use throughout the article (cf. \autoref{sec: terminology} for the terminology used).

\begin{definition}\label{not-non-norm-dp}
Let $X$ be a non-normal integral Gorenstein del Pezzo surface with normalisation $\nu \colon Y \to X$.
We say $X$ is \emph{tame} if $H^1(X, \mathcal{O}_X)=0$.
\end{definition}

One can characterise tame del Pezzo surfaces in terms of the conductor. 

\begin{theorem} \label{t-recap-reid}
Let $X$ be a non-normal integral Gorenstein del Pezzo surface over an algebraically closed field. Then the conductor $D \subset X$ is integral. Moreover, : 
\begin{enumerate}
    \item \label{item: tame} $X$ is tame if and only if $D \simeq \mathbb{P}^1$;
    \item \label{item: divisibility} $(p-1)$ divides $h^1( \mathcal{O}_X)$.
\end{enumerate}
\end{theorem}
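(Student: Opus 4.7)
The plan is to analyse everything through the normalisation $\nu\colon Y \to X$ and the conductor square. Write $C \subset Y$ for the ramification divisor and $D \subset X$ for the conductor subscheme. Since $X$ is Gorenstein, the adjunction formula $K_Y + C = \nu^*K_X$ holds, so $-(K_Y+C) = -\nu^*K_X$ is ample on $Y$. This forces $Y$ to lie on a short list of normal surfaces (roughly: smooth rational surfaces, cones over rational curves, or ruled surfaces over an elliptic curve), obtained by analysing the anti-log-canonical model of the pair $(Y,C)$ in the spirit of \autoref{l-HW}. The workhorse of the whole argument is the conductor exact sequence
\[
0 \to \mathcal{O}_X \to \nu_*\mathcal{O}_Y \oplus \mathcal{O}_D \to \nu_*\mathcal{O}_C \to 0,
\]
which, together with $H^2(\mathcal{O}_X)=0$ (Serre duality on the Gorenstein surface $X$, since $-K_X$ is ample), expresses $h^1(\mathcal{O}_X)$ in terms of $h^1(\mathcal{O}_Y)$, $h^1(\mathcal{O}_D)$, $h^1(\mathcal{O}_C)$, and the cokernel term in degree $0$.

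For the integrality of $D$, I would argue locally first: a non-normal Gorenstein surface singularity is a hypersurface $\{f=0\}\subset\mathbb{A}^3$, and inspecting the possible normal forms (the ``double curve'' type $uv=0$ and the ``pinch point'' type $u^2 - v^2 w = 0$, together with their wild characteristic-$p$ analogues) shows that the non-normal locus is locally irreducible. One then upgrades this to global integrality using the classification of $(Y,C)$ above together with ampleness of $-K_X|_D$: a splitting $D = D_1 + D_2$ would force a non-trivial splitting of $C$, contradicting the ampleness of $-(K_Y+C)$ on the irreducible surface $Y$.

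With $D$ integral, part (\ref{item: tame}) follows directly from the cohomological conductor sequence: if $h^1(\mathcal{O}_X) = 0$, then $h^1(\mathcal{O}_D)$ must vanish, so the integral projective curve $D$ is $\mathbb{P}^1$; conversely, $D \cong \mathbb{P}^1$ together with the short list of allowed $(Y,C)$ makes the right-hand side of the conductor sequence collapse and yields the vanishing. For part (\ref{item: divisibility}), the key observation is that the finite birational morphism $\nu|_C\colon C \to D$ is, up to normalisation, purely inseparable of degree a power of $p$, reflecting how the two ``branches'' of $X$ along $D$ are identified. Consequently, the cokernel $\nu_*\mathcal{O}_C/\mathcal{O}_D$ carries a filtration whose graded pieces are Frobenius pullbacks of line bundles on $D$, and Riemann--Roch on the integral curve $D$ shows that each piece contributes a multiple of $(p-1)$ to the relevant Euler characteristic. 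Feeding this back into the conductor sequence gives $(p-1) \mid h^1(\mathcal{O}_X)$.

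The main obstacle is the classification of the singularities of $X$ along $D$ in small characteristic: wild pinch-type singularities in characteristics $2$ and $3$ create subtle case distinctions absent from the characteristic-zero picture, and pinning down the exact structure of the purely inseparable cover $\nu|_C\colon C \to D$ in these cases is the technical heart of Reid's argument in \cite{Rei94}.
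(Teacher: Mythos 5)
The paper does not reprove this theorem: the proof is a pure citation to Reid's paper (the Lemma on page 718 for integrality of $D$, the proof of Corollary 4.10 for the tameness characterisation, and Section 4.11 for the divisibility). Your sketch is therefore an attempt to reconstruct Reid's argument, and it has two genuine gaps.

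First, the cohomological argument for part~(\ref{item: tame}) does not work with the conductor sequence you wrote. Taking cohomology of $0 \to \mathcal{O}_X \to \nu_*\mathcal{O}_Y \oplus \mathcal{O}_D \to \nu_*\mathcal{O}_C \to 0$ and using $H^2(\mathcal{O}_X)=0$, the hypothesis $h^1(\mathcal{O}_X)=0$ only gives an \emph{isomorphism} $H^1(\mathcal{O}_Y)\oplus H^1(\mathcal{O}_D) \cong H^1(\mathcal{O}_C)$, hence $h^1(\mathcal{O}_D) \leq h^1(\mathcal{O}_C)$; it does not force $h^1(\mathcal{O}_D)=0$. What Reid actually uses is the residue sequence $0 \to \omega_X \to \nu_*\nu^*\omega_X \to \omega_D \to 0$ (the same sequence displayed in the paper right before Lemma~\ref{p-global-sections-KX}). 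Since $\omega_X$ and $\nu^*\omega_X$ are anti-ample, both $H^0(\omega_X)$ and $H^0(Y,\nu^*\omega_X)$ vanish, and the long exact sequence produces an injection $H^0(\omega_D) \hookrightarrow H^1(\omega_X) \cong H^1(\mathcal{O}_X)^\vee$. Thus $h^1(\mathcal{O}_X)=0$ forces $p_a(D)=h^0(\omega_D)=0$, so $D \cong \mathbb{P}^1$ once you know $D$ is integral. (The converse direction also requires additional input about $Y$.) Second, your argument for integrality of $D$ fails: the claim that a non-normal Gorenstein surface singularity has locally irreducible non-normal locus is simply false -- the hypersurface $\{xyz=0\}$ is Gorenstein with conductor a union of three lines -- and in any case local irreducibility of the branches would not yield global irreducibility. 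The closing sentence that a split $D=D_1+D_2$ "contradicts ampleness of $-(K_Y+C)$" has no force, since an ample divisor class on a surface can perfectly well be represented by a divisor with several components. Reid's argument is genuinely global and uses the adjunction $K_Y+C=\nu^*K_X$ together with a classification of the pairs $(Y,C)$; you gestured at this but did not supply the actual mechanism. Your heuristic for part~(\ref{item: divisibility}) about the purely inseparable map $\nu|_C \colon C \to D$ and Frobenius-twisted filtrations is the right intuition, but you haven't carried out the Riemann--Roch computation that turns it into an argument.
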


\begin{proof}
    The integrality of the conductor follows from \cite[Lemma, page 718]{Rei94} for integral del Pezzo surfaces.
    Then, \eqref{item: tame} follows from  the proof of \cite[Corollary 4.10]{Rei94}, as $D$ is irreducible.
    \eqref{item: divisibility} is proved in \cite[4.11]{Rei94}
\end{proof}

We will repeatedly use the following:

\begin{lemma}\label{l-image}
Let $\pi \colon X \to Y$ be a proper birational morphism of $k$-surfaces.
If $X$ is a regular (resp. canonical) del Pezzo surface, then so is $Y$.
If $X$ is a regular (or canonical) weak del Pezzo surface, then also $Y$ is a canonical weak del Pezzo surface.
\end{lemma}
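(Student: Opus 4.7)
My plan is to reduce to the case where $\pi$ is a birational contraction of normal projective surfaces, identify the singularities of $Y$ by analysing the $\pi$-exceptional locus through the minimal resolution of $X$, and conclude positivity of $-K_Y$ from that of $-K_X$ using the negativity lemma and Nakai--Moishezon. Replacing $Y$ by the Stein factorisation of $\pi$, I may assume $\pi_*\mathcal{O}_X = \mathcal{O}_Y$, so that $Y$ is a normal proper $k$-surface with $H^0(Y, \mathcal{O}_Y) = H^0(X, \mathcal{O}_X) = k$, and the intersection matrix of the irreducible $\pi$-exceptional curves $\{E_i\}$ is negative definite.

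To pin down the singularities of $Y$, I pass to the minimal resolution $f \colon Z \to X$. Since $X$ is canonical (or regular), $K_Z = f^*K_X$ and $-K_Z$ is big and nef. For any curve $C \subset Z$ contracted by $g \coloneqq \pi \circ f$, the projection formula gives $-K_Z \cdot C = -K_X \cdot f_*C \geq 0$, while $C^2 < 0$ since $C$ is $g$-exceptional; adjunction on the smooth surface $Z$ then forces $p_a(C) = 0$ and $C^2 \in \{-1,-2\}$. Successive application of Castelnuovo's criterion (valid over arbitrary fields) contracts the $(-1)$-curves in each connected $g$-exceptional component, reducing it to a pure $(-2)$-configuration which contracts to a rational double point; such points are Gorenstein with canonical singularities in every characteristic by Artin's classification. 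Combined with \autoref{cor: surface_canonical_rational}, this yields that $Y$ is Gorenstein canonical. When $X$ is a regular del Pezzo, $-K_X$ is ample so the analysis above forces every $g$-exceptional $C$ with $f_*C \neq 0$ to be a $(-1)$-curve; thus $\pi$ contracts only $(-1)$-curves on $X$ and $Y$ is even regular.

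Finally, because $Y$ is Gorenstein, $\pi^*K_Y$ is Cartier and the $\pi$-exceptional $\mathbb{Q}$-divisor $D \coloneqq K_X - \pi^*K_Y$ satisfies $D \cdot E_i = K_X \cdot E_i \leq 0$; the negativity lemma then gives $D \geq 0$. For a curve $C \subset Y$ with strict transform $\tilde C$ on $X$, the projection formula yields
\[
-K_Y \cdot C \;=\; -K_X \cdot \tilde C + D \cdot \tilde C \;\geq\; -K_X \cdot \tilde C,
\]
while the identity $\pi^*K_Y \cdot D = 0$ (equivalently $K_X \cdot D = D^2$) produces $(-K_Y)^2 = K_X^2 - D^2 \geq K_X^2 > 0$; both inequalities are strict whenever $-K_X$ is ample. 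By Nakai--Moishezon, $-K_Y$ is therefore ample in the del Pezzo case and big and nef in the weak case, so $Y$ is projective and is a (weak) del Pezzo surface of the claimed type. The main obstacle I anticipate is the singularity identification in Step 2: over imperfect fields of positive characteristic one has to combine Castelnuovo's criterion with Artin's classification of rational double points to ensure that the mixed $(-1)$/$(-2)$-configurations actually contract to Gorenstein canonical points rather than to wilder rational singularities.
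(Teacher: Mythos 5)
Your argument is correct, but it takes a noticeably longer route than the one in the paper, and you in fact already have the shortcut in hand. In your final step you write $D := K_X - \pi^*K_Y$ and observe, via the negativity lemma, that $D \geq 0$. That single inequality, combined with $X$ being canonical (equivalently $K_Z = f^*K_X$ on the minimal resolution $Z \to X$), immediately gives that every discrepancy of $Y$ is nonnegative: pulling everything back to $Z$, one has $K_Z = (\pi \circ f)^*K_Y + f^*D + (\text{effective})$, and both $f^*D$ and the $f$-exceptional part are effective. So $Y$ is canonical, with no need for the curve-by-curve analysis through Castelnuovo's criterion and Artin's classification. This is exactly what the paper does: it applies the negativity lemma directly to the relatively big and nef divisor $-K_X$ and reads off canonicity of $Y$, then deduces bigness and nefness of $-K_Y = \pi_*(-K_X)$ by projection formula.

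A secondary, more technical, point is that your singularity identification in Step~2 is stated as if $k$ were algebraically closed. Over a general (in particular imperfect) base field, an irreducible $g$-exceptional curve $C$ with $p_a(C) = 0$ on $Z$ satisfies $K_Z \cdot C + C^2 = -2d_C$ where $d_C = [H^0(C,\mathcal{O}_C):k]$, so the numerical dichotomy is $C^2 \in \{-d_C, -2d_C\}$ rather than $\{-1,-2\}$; Castelnuovo's contraction criterion in this generality contracts such a curve to a regular (not merely smooth) point with residue field $H^0(C,\mathcal{O}_C)$. One also cannot blithely pass to $\overline{k}$ to invoke Artin's list, since $X_{\overline{k}}$ may fail to be normal over an imperfect field. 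None of this is fatal --- the computation can be carried out carefully over $k$ --- but it is additional bookkeeping that the negativity-lemma argument avoids entirely, which is precisely why the paper's proof is so short.
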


\begin{proof}
We only prove the case where $X$ is a regular weak del Pezzo surface, as the others are similar.
As $-K_X$ is $\pi$-big and $\pi$-nef, we conclude that $Y$ has canonical singularities by the negativity lemma \cite[Lemma 2.11]{Tan18b}. As $-K_Y=\pi_*(-K_X)$ we conclude by projection formula that $-K_Y$ is big and nef.
\end{proof}

From the point of view of the MMP, it is natural to consider surfaces of del Pezzo type. For their basic properties we refer to \cite[Section 2.3]{BT22}.

\begin{definition} \label{def: dPtype}
We say $X$ is a \emph{surface of del Pezzo type} over $k$ if $X$ is a projective $k$-variety with $H^0(X, \mathcal{O}_X)=k$ and there exists $\Delta \geq 0$ such that $(X,\Delta)$ is a log del Pezzo pair (i.e. $(X,\Delta)$ klt and $-(K_X+\Delta)$ is big and nef).
\end{definition}

The following describes the Picard scheme of del Pezzo surfaces.

\begin{proposition} \label{prop: picard-dP}
    Let $X$ be a surface of del Pezzo type. 
    Then $\Pic^0_{X/k}$ is a unipotent smooth commutative $k$-group scheme of finite type over $k$ of dimension $h^1(X, \mathcal{O}_X)$.
\end{proposition}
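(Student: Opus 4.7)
The plan is to verify each of the four assertions in turn: commutativity, smoothness, unipotence, and the identification of the dimension with $h^1(X,\mathcal{O}_X)$.

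First, since $X$ is projective and $H^0(X,\mathcal{O}_X)=k$, Grothendieck's representability theorem gives $\Pic_{X/k}$ as a commutative $k$-group scheme locally of finite type, with identity component $\Pic^0_{X/k}$ of finite type. The standard deformation-theoretic identification
\[
T_e\Pic^0_{X/k} \;\simeq\; H^1(X,\mathcal{O}_X)
\]
yields $\dim\Pic^0_{X/k} \leq h^1(X,\mathcal{O}_X)$, with equality precisely when $\Pic^0_{X/k}$ is smooth. So it suffices to prove smoothness, and then unipotence.

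For smoothness, I would pass to $k^{\mathrm{sep}}$ (these properties descend along separable extensions) and argue that $(\Pic^0_{X/k^{\mathrm{sep}}})_{\mathrm{red}}$, which is automatically a smooth commutative algebraic group, has dimension equal to $h^1(X,\mathcal{O}_X)$. To exhibit enough deformations of the trivial line bundle, I would realize each class in $H^1(X,\mathcal{O}_X)$ as an honest family of line bundles via $\alpha_p$-torsors (the Artin--Milne interpretation of $H^1(X,\mathcal{O}_X)$ as $\mathrm{Hom}(\alpha_p,\Pic^0_{X/k})$ applied to a suitable pullback from a $k$-algebra), which together span a smooth subgroup of dimension $h^1(X,\mathcal{O}_X)$.

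For unipotence, I would invoke Chevalley--Rosenlicht over $k^{\mathrm{sep}}$: $\Pic^0_{X/k^{\mathrm{sep}}}$ is an extension of an abelian variety by a linear group, the latter an extension of a unipotent part by a torus; I must rule out both the abelian and toric parts. The abelian part is excluded because $X$ being of del Pezzo type is covered by rational curves (after passing to $\overline{k}$ and working with a suitable birational model, using \autoref{p-klt-rational} and \autoref{l-HW} to control singularities), and any morphism from a rational curve to an abelian variety is constant, forcing $\mathrm{Alb}(X)=0$ and hence no abelian quotient of the dual $\Pic^0_{X/k}$. The toric part is excluded because, via Kummer theory, characters of the maximal torus inject into the prime-to-$p$ torsion of $\Pic_{X/k^{\mathrm{sep}}}$, which in turn is controlled by the étale fundamental group of $X_{k^{\mathrm{sep}}}$; finiteness of $\pi_1^{\mathrm{\acute et}}$ for klt surfaces of del Pezzo type forces this contribution to vanish, so no torus appears.

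The main obstacle is smoothness of $\Pic^0_{X/k}$ in positive characteristic over an imperfect base: classical Kodaira-type vanishings, which would force $H^1(X,\mathcal{O}_X)=0$ in characteristic zero, genuinely fail, and $X_{\overline{k}}$ need not be normal. The whole point is to work over $k$ itself, using MMP-level information about $(X,\Delta)$ together with the explicit $\alpha_p$-torsor construction, rather than relying on a smooth model of $X_{\overline{k}}$.
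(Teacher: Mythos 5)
Your argument for smoothness has a genuine gap. The paper uses the simple, standard criterion: since $\omega_X^{-1}$ is (big and) ample, Serre duality gives $H^2(X,\mathcal{O}_X) \cong H^0(X,\omega_X) = 0$, and the vanishing of the obstruction space $H^2(X,\mathcal{O}_X)$ immediately forces $\Pic^0_{X/k}$ to be smooth of dimension $h^1(X,\mathcal{O}_X)$ (this is exactly the cited result in FGA-Explained). Your proposed alternative via $\alpha_p$-torsors does not work: $H^1_{\mathrm{fppf}}(X,\alpha_p)$ is only the Frobenius kernel of $H^1(X,\mathcal{O}_X)$, not all of $H^1(X,\mathcal{O}_X)$, so you cannot exhibit a full $h^1(X,\mathcal{O}_X)$-dimensional family of line bundles this way. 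Worse, the argument is internally inconsistent: $\mathrm{Hom}(\alpha_p,G)=0$ for any smooth group scheme $G$, so the $\alpha_p$-torsor viewpoint can only detect a non-smooth Picard scheme; it cannot certify smoothness. You should replace this entire paragraph by the $H^2=0$ observation.

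Your route to unipotence is a legitimately different strategy from the paper's, but it is substantially more involved and leaves nontrivial work undone. The paper instead cites the result (from earlier work of Bernasconi--Tanaka) that over a separably closed field every torsion line bundle on a surface of del Pezzo type is $p^n$-torsion for a uniform $n$; combined with smoothness (hence reducedness and density of rational points) this shows $p^n$ annihilates $\Pic^0_{X/k}$, and a smooth connected commutative group killed by a power of $p$ is unipotent. Your Chevalley--Rosenlicht approach would require establishing, over imperfect fields, both that the Albanese quotient vanishes (which you would presumably get from rational chain connectedness of a suitable model of $X_{\bar k}$, but note $X_{\bar k}$ may fail to be normal so this needs care) and that the prime-to-$p$ torsion in $\Pic_{X/k^{\mathrm{sep}}}$ is bounded (which you attribute to finiteness of $\pi_1^{\mathrm{\acute et}}$, a fact that is not readily available for surfaces of del Pezzo type over general imperfect fields). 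Each of these is a project in its own right, whereas the $p$-power-torsion input the paper uses has already been established in the references the paper is building on.
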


\begin{proof}
    By Serre duality, we have $H^2(X, \mathcal{O}_X)=H^0(X, \omega_X)=0$ and therefore by \cite[Corollary 9.4.18.3, Corollary 9.5.13 and Remark 9.5.15]{FAG} the group scheme $\Pic^0_{X/k}$ is smooth of dimension $h^1(\mathcal{O}_X)$.
    We are left to show that $\Pic^0_{X/k}$ is unipotent.
    For this, we can suppose $k$ is separably closed.
    By \cite[Theorem 1.3]{BT22}, there exists $n>0$ such that for every $L \in \Pic^0(X)$ we have $L^{\otimes p^n} \simeq \mathcal{O}_X$.
    In other words, multiplication by $p^n$ on $\Pic^0_{X/k}$ coincides with the zero homomorphism on $k$-rational points.
    By density of rational points \cite[Proposition 3.5.70]{Poo17} and since $\Pic^0_{X/k}$ is reduced, we conclude that taking $p^n$-powers on $\Pic^0_{X/k}$ coincides with the zero homomorphism as a morphism of schemes and thus $\Pic^0_{X/k}$ is unipotent.
\end{proof}

\section{Bounds on the anticanonical volume and effective very ampleness}\label{s-boundedness}

In this section we prove bounds on the anticanonical volume and very ampleness statements for canonical del Pezzo surfaces over imperfect fields.

\subsection{Bounding volumes}
We start by bounding the volume of canonical del Pezzo surfaces in terms of their thickening exponent $\epsilon(X/k)$ (see \cite[Definition 7.4]{Tan19} and \autoref{def: thickeningexponent} below).
First, we need an explicit bound on the Cartier index of a klt surface singularity.
For a $\mathbb{Q}$-factorial variety $X$, we define its \emph{Cartier index} to be the smallest integer $n>0$ such that for every Weil divisor $D$ in $X$, the Weil divisor $nD$ is Cartier.

\begin{lemma}\label{l-Cartier-index}
Let $X$ be the spectrum of a local $k$-algebra $(R,\mathfrak{m})$, and let $x$ be the closed point corresponding to $\mathfrak{m}$.
Suppose $(X, \Delta)$ is a klt surface pair for some $\Delta \geq 0$.
Let $f \colon Y \to X$ be the minimal resolution of singularities, with exceptional divisor $E=\sum_{i=1}^n E_i$.
Let $M=(E_i \cdot_k E_j)_{i,j=1}^n$ be the intersection matrix and let $d= \det(M)$. Then there exists $d_x$ such that $d=d_x[k(x):k]$ and the Cartier index of $X$ divides $d_x$.
\end{lemma}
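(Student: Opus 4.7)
The plan is to combine the Mumford pullback construction with the observation that all intersection numbers involving the exceptional curves of $f$ are divisible by $[k(x):k]$, because the whole exceptional configuration is concentrated over $x$.

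By \autoref{p-klt-rational}, $X$ has a rational and $\mathbb{Q}$-factorial singularity at $x$; in particular $M$ is invertible (negative definite) and every Weil divisor $D$ on $X$ is $\mathbb{Q}$-Cartier. The minimal resolution $f\colon Y \to X$ is projective by \cite{Lip78}, so for every Weil divisor $D$ one has a Mumford pullback $f^*D = \widetilde D + \sum_{i=1}^n a_i E_i$, where $\widetilde D$ is the strict transform and $(a_i)_i \in \mathbb{Q}^n$ is uniquely determined by the linear system $M(a_i)_i^T = -(\widetilde D \cdot_k E_j)_j^T$. Since $Y$ is regular and $X$ has a rational singularity, $ND$ is Cartier on $X$ if and only if $Na_i \in \mathbb{Z}$ for every $i$, so the Cartier index of $X$ equals the least common multiple of the denominators of the $a_i$ as $D$ varies over all Weil divisors.

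The crucial step is to show that every entry of $M$ and every coordinate of $(\widetilde D \cdot_k E_j)_j$ is divisible by $[k(x):k]$. For off-diagonal entries of $M$ and for $\widetilde D \cdot_k E_j$, I would write the intersection as a sum $\sum_p m_p [k(p):k]$ over closed intersection points $p$; each such $p$ lies in $f^{-1}(x)$, so the inclusion $k(x) \hookrightarrow k(p)$ gives $[k(x):k] \mid [k(p):k]$. The self-intersection $E_i^2 = \deg_k \mathcal O_Y(E_i)|_{E_i}$ requires slightly more care: since $E_i$ is a reduced, integral, proper $k$-scheme supported set-theoretically over $x$, reducedness forces its structure morphism to factor through $\Spec k(x)$. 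Hence every coherent sheaf on $E_i$ is naturally a $k(x)$-sheaf and $\chi_k = [k(x):k]\,\chi_{k(x)}$ on such sheaves, yielding $[k(x):k] \mid E_i^2$.

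To finish, I would write $M = [k(x):k]\,M'$ and $(\widetilde D \cdot_k E_j)_j = [k(x):k]\,(c_j)_j$ with $M'$ an integer matrix and $c_j \in \mathbb{Z}$. Then the Mumford equations rescale to $M'(a_i)_i^T = -(c_j)_j^T$, so the denominators of the $a_i$ divide $\det M'$. This shows that $d = [k(x):k]^n \det M'$, so $[k(x):k] \mid d$ and $d_x := d/[k(x):k] = [k(x):k]^{n-1} \det M'$ is an integer; moreover the Cartier index of $X$ divides $\det M'$, hence divides $d_x$. The main obstacle I expect is the self-intersection case: extracting the factor $[k(x):k]$ from $E_i^2$ relies on the scheme-theoretic fact that each reduced $E_i$ is canonically a $k(x)$-scheme, which inflates $k$-Euler characteristics by $[k(x):k]$; once this divisibility is secured, the rest is linear-algebraic bookkeeping.
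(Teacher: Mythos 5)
Your proof is correct and follows the same basic strategy as the paper's — Mumford pullback, negative-definiteness of $M$, and a divisibility argument reducing everything to linear algebra — but it does differ in one genuinely useful respect. The paper extracts the factor $[k(x):k]$ only from the right-hand-side vector $(\widetilde D \cdot E_j)_j$ (using that the intersection points $y_j$ lie over $x$) and then inverts $M$ via its adjugate over $\mathbb{Z}$, which directly gives $d_x a_i \in \mathbb{Z}$. You instead observe that $[k(x):k]$ divides \emph{every} entry of $M$ as well, using the scheme-theoretic remark that each reduced exceptional component $E_i$ is canonically a $k(x)$-scheme (so $k$-Euler characteristics on $E_i$ are inflated by $[k(x):k]$); this rescaling $M = [k(x):k]M'$, $(\widetilde D\cdot E_j) = [k(x):k](c_j)$ yields the stronger conclusion that the Cartier index divides $\det M' = d/[k(x):k]^n$, which in turn divides $d_x$. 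A pleasant side effect is that your proof makes explicit that $d_x = [k(x):k]^{n-1}\det M'$ is an integer, a point the paper's proof leaves implicit. The one step you state without justification is that for a rational singularity, $d_x D$ is Cartier on $X$ whenever $f^*(d_xD)$ is an integral ($f$-numerically trivial) divisor on $Y$; the paper proves this carefully via the base point free theorem for excellent surfaces, and you should either cite the Picard-group description of rational singularities (Lipman) or reproduce that argument, since this descent is not completely formal.
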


\begin{proof}
Recall that $X$ is rational and $\mathbb{Q}$-factorial by \autoref{p-klt-rational}.
Let $D$ be a Weil integral divisor on $X$ and write $f^{*} D = f_*^{-1} D + \sum_{i=1}^n a_i E_i$ for some $a_i \in \mathbb{Q}$. 

We claim it is sufficient to show $d_xa_i$ is integral. Indeed, then $f^{*} (d_xD)$ is an integral divisor on a regular surface and thus $f^{*} (d_xD)$ is Cartier. 
If we write $K_Y+\Delta_Y=f^*(K_X+\Delta)$, then $\Delta_Y$ is effective by the negativity lemma and $(Y,\Delta_Y)$ is klt.
As $f^*(d_xD)-(K_Y+\Delta_Y)$ is $f$-nef and big,
and $f^*(d_xD)$ is $f$-trivial, there exists $b_0>0$ such that for all $b \geq b_0$ we have that $bf^*(d_xD)=f^*A_{b}$ for a Cartier divisor $A_b$ on $X$ by the base point free theorem for excellent surfaces \cite[Theorem 4.4]{Tan18b}.
Then $f^*(d_xD)=(b_0+1)f^*d_xD-b_0f^*d_xD=f^*(A_{b_0+1}-A_{b_0})$ and thus $d_xD$ is Cartier.

We denote by $(a_i)$ (resp. $f_*^{-1} D \cdot E_j$) the vector $(a_1, \dots, a_n)$ (resp. $(f_*^{-1} D \cdot E_1, \dots, f_*^{-1} D \cdot E_n)$). Given a closed point $x \in X$, we denote by $k(x)$ the residue field of $X$ at $x$.
By the projection formula, $$(a_i)=M^{-1} (-f_*^{-1} D \cdot E_j)=\frac{1}{d_x [k(x):k]} A (f_*^{-1} D \cdot E_j),$$ where $A$ is a matrix with integer coefficients. 
We have $(-f_*^{-1} D \cdot E_j) = \sum_j m_j  [k(y_j):k]$ for some $m_j \in \mathbb{Z}$, where the $y_j$ are the intersection points of $f_*^{-1}D$ with $E_j$. As $k(x) \subset k(y_j)$, we conclude that $[k(x):k]$ divides $(f_*^{-1} D \cdot E_j)$, thus showing $d_x a_i$ is an integer.
\end{proof} 

We bound the volume of canonical del Pezzo surfaces, generalising the regular case proven in \cite[Theorem 4.7]{Tan19}. 

\begin{definition}[{\cite[Definition 5.1, Definition 7.4]{Tan21}}] \label{def: thickeningexponent}
Let $X$ be a
normal variety over $k$ such that $k$ is algebraically closed in $K(X)$. We
define the \emph{Frobenius length of geometric non-normality} $\ell_F(X/k)$ as 
$$ \ell_F(X/k):= \text{min} \left\{ e \geq 0 \mid (X \times_k k^{1/p^e})^{\norm}_{\text{red}} \text{ is geometrically normal over } k^{1/p^e} \right\}.
$$
Set $R$ to be the
local ring of $X \times_k k^{1/p^{\infty}}$
at the generic point. 
We define the \emph{thickening exponent} $\epsilon(X/k)$ as the non-negative integer such that
$\length_R R = p^{\epsilon(X/k)}$ 
\end{definition}

For a discussion of the properties of $\ell_F(X/k)$ and $\epsilon(X/k)$, we refer the reader to \cite[Section 5, Section 7]{Tan21}.

We fix some notation.
For $d \geq 1$, we denote the Hirzebruch surface $\mathbb{P}_{\mathbb{P}^1}(\mathcal{O}_{\mathbb{P}^1} \oplus \mathcal{O}_{\mathbb{P}^1}(-d))$ by $\mathbb{F}_d$, a closed rational fibre by $F$ and the negative section by $C_d$.
The contraction of $C_d$ is the morphism $p \colon \mathbb{F}_d \to \mathbb{P}(1,1,d)$ and we denote by $L \coloneqq p_*F$ the generator of its class group. Recall that $L \in |\mathcal{O}_{\mathbb{P}(1,1,d)}(1)|$, and that $L^2 =\frac{1}{d}$. 

\begin{lemma}\label{lem: Cartier_index_weigthed}
    The divisor class $nK_{\mathbb{P}(1,1,d)}$ is Cartier if and only if $d \mid n(d+2).$
\end{lemma}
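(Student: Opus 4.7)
The plan is to work in the class group $\mathrm{Cl}(\mathbb{P}(1,1,d)) = \mathbb{Z} \cdot L$, first identifying the canonical class as $K_{\mathbb{P}(1,1,d)} \sim -(d+2)L$, and then characterising when an integer multiple $aL$ is Cartier by pulling back along the resolution $p \colon \mathbb{F}_d \to \mathbb{P}(1,1,d)$ introduced just before the lemma.

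First I would compute $p^{*}L$ by intersection theory on $\mathbb{F}_d$. Since $L = p_{*}F$ and $C_d$ is $p$-exceptional, one must have $p^{*}L = F + \alpha C_d$ with $p^{*}L \cdot C_d = 0$; using $F \cdot C_d = 1$ and $C_d^2 = -d$ this forces $\alpha = 1/d$ (and confirms the stated $L^2 = 1/d$). The same technique applied to the known Hirzebruch expression $K_{\mathbb{F}_d} = -2 C_d - (d+2) F$, together with the discrepancy identity $p^{*} K_{\mathbb{P}(1,1,d)} = K_{\mathbb{F}_d} - \beta C_d$ and the condition $p^{*} K_{\mathbb{P}(1,1,d)} \cdot C_d = 0$, produces $p^{*} K_{\mathbb{P}(1,1,d)} = -(d+2)\, p^{*}L$, and hence $K_{\mathbb{P}(1,1,d)} \sim -(d+2) L$ in the class group.

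It then remains to check that, for $a \in \mathbb{Z}$, the Weil divisor $aL$ is Cartier on $\mathbb{P}(1,1,d)$ if and only if $d \mid a$. The forward direction is immediate: if $aL$ is Cartier, then $p^{*}(aL) = a F + \tfrac{a}{d} C_d$ must be an integer Weil divisor on the regular surface $\mathbb{F}_d$, so $d \mid a$. The converse is immediate from the fact that $dL = \mathcal{O}_{\mathbb{P}(1,1,d)}(d)$ is a genuine line bundle (equivalently, the unique singular point $[0\!:\!0\!:\!1]$ is the cyclic quotient $\tfrac{1}{d}(1,1)$ with local class group $\mathbb{Z}/d\mathbb{Z}$, so the Cartier subgroup of $\mathbb{Z} \cdot L$ has index exactly $d$). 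Applying this criterion to $a = n(d+2)$ and combining with the description of $K_{\mathbb{P}(1,1,d)}$ from the previous step gives precisely the condition $d \mid n(d+2)$. There is no real obstacle beyond keeping the signs and coefficients on $\mathbb{F}_d$ straight in the computation of $p^{*}L$ and $K_{\mathbb{F}_d}$.
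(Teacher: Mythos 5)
Your proof is correct and follows the same logical skeleton as the paper's one-line argument: both reduce the claim to the two facts that $K_{\mathbb{P}(1,1,d)} \sim -(d+2)L$ and that the Cartier index of $L$ is exactly $d$. The only difference is that you verify these facts explicitly via intersection theory on the resolution $p\colon \mathbb{F}_d \to \mathbb{P}(1,1,d)$, whereas the paper simply cites them as known.
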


\begin{proof}
    As $K_{\mathbb{P}(1,1,d)} \sim (-d-2)L$ and the Cartier index of $L$ is $d$, the lemma is immediate.
\end{proof}

\begin{proposition}\label{p-bound-volume-canonical-dP}
Let $X$ be a canonical del Pezzo surface. Then
\begin{enumerate}
    \item if $X$ is geometrically normal, then it is geometrically canonical and $K_X^2 \leq 9$;
    \item if $X$ is not geometrically normal, then $p\in \left\{2,3 \right\}$ and 
    \begin{enumerate}
        \item if $p=3$, $\ell_F(X/k)=1$ and $K_X^2 \leq 12 \cdot 3^{\epsilon(X/k)}$.
        \item if $p=2$, $\ell_F(X/k)\leq 2$ and $K_X^2 \leq 16 \cdot 2^{\epsilon(X/k)}$.
    \end{enumerate}
\end{enumerate}
\end{proposition}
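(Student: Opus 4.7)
For part (1), Proposition~\ref{prop: geom-normal-canonical-dP} upgrades ``geometrically normal canonical'' to ``geometrically canonical,'' so $X_{\overline{k}}$ is a canonical del Pezzo surface over $\overline{k}$. Since $K_X^2$ is preserved under flat base change, the classical bound $K_{X_{\overline{k}}}^2 \le 9$ coming from Dolgachev's list (Proposition~\ref{l-HW}(1)) yields part (1) immediately.

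For part (2), the starting point is Tate's base change formula applied to $f\colon Y \to X$ where $Y = (X_{\overline{k}})_{\mathrm{red}}^{\mathrm{norm}}$: it produces an effective divisor $C$ with
\[
K_Y + (p-1)C \;=\; f^*K_X,
\]
and $C \neq 0$ precisely because $X$ is not geometrically normal. Ampleness of $-K_X$ makes $-f^*K_X$ big and nef on $Y$, so $-K_Y$ is big. I would then place $Y$ (after running a suitable MMP) into the Hidaka--Watanabe trichotomy of Proposition~\ref{l-HW}: either $Y$ is a canonical del Pezzo over $\overline{k}$ with $K_Y^2 \le 9$, or $Y$ arises from a ruled surface $\mathbb{P}_E(\mathcal{O}_E \oplus \mathcal{L})$ over an elliptic curve, in which case a similar numerical bound on $K_Y^2$ holds.

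To descend from $Y$ back to $X$, I would use the degree formula
\[
(f^*K_X)^2 \;=\; p^{\epsilon(X/k)}\,K_X^2,
\]
which is a direct consequence of Definition~\ref{def: thickeningexponent}: the generic-point multiplicity $p^{\epsilon(X/k)}$ of $(X_{\overline{k}})_{\mathrm{red}}$ inside $X_{\overline{k}}$ is exactly the factor by which intersection numbers are rescaled when moving between $Y$ and $X$. Expanding $(f^*K_X)^2 = (K_Y + (p-1)C)^2$ and controlling the cross term via $f^*K_X \cdot C \le 0$ (by nefness of $-f^*K_X$ and effectivity of $C$) reduces the bound on $K_X^2$ to the bound on $K_Y^2$ coming from Proposition~\ref{l-HW}. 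The restriction $p\in\{2,3\}$ should then fall out from the observation that for $p \ge 5$ the coefficient $(p-1) \ge 4$ makes $-K_Y = (p-1)C - f^*K_X$ too positive to be compatible with any item of Proposition~\ref{l-HW}, yielding a contradiction.

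Finally, the precise values $\ell_F(X/k) = 1$ for $p=3$ and $\ell_F(X/k) \le 2$ for $p=2$ would come from iterating Tate's construction on the successive base changes $X_{k^{1/p^e}}$ and showing that the argument above forces the process to terminate in the indicated number of steps. The main obstacle I anticipate is the careful case analysis needed to produce the \emph{sharp} constants $12$ and $16$: one must combine the case-by-case bound on $K_Y^2$ through Proposition~\ref{l-HW}, the exact intersection number $f^*K_X \cdot C$ (requiring an explicit description of the conductor $C$ via Reid-type results as in Theorem~\ref{t-recap-reid}), and the interplay between $\epsilon(X/k)$ and $\ell_F(X/k)$, so that together they conspire to deliver the stated constants without slack.
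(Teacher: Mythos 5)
Your treatment of part (1) matches the paper's proof. For part (2), however, your plan has both a concrete error and a structural gap.

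The degree formula is stated backwards. Following \cite[Example 1]{Kle66} (as used later in the proof of \autoref{p-wildp=2}), the correct relation is $K_X^2 = p^{\epsilon(X/k)} (f^*K_X)^2$, not $(f^*K_X)^2 = p^{\epsilon(X/k)} K_X^2$: the nonreduced base change $X_{\overline{k}}$ has cycle $p^{\epsilon(X/k)}[(X_{\overline{k}})_{\red}]$, so intersection numbers are \emph{divided}, not multiplied, when passing to $Y$. With your version of the formula, the bound you would extract is off by a factor of $p^{2\epsilon(X/k)}$ from the stated one, and the argument as written does not recover the statement.

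More importantly, the paper's proof does not proceed by placing $Y = (X_{\overline{k}})_{\red}^{\norm}$ into the Hidaka--Watanabe trichotomy. The surface $Y$ is only known to have $-(K_Y + (p-1)C)$ nef; $-K_Y$ is big but generally not nef, so \autoref{l-HW} does not directly apply and the relevant classification is the one from \cite[Theorem 4.1]{PW}, which describes $(Y,C)$ pairs. The paper's actual strategy is to first run a $K$-MMP on the minimal resolution of $X$ \emph{over $k$} (not over $\overline{k}$), reducing to a Mori fibre space $Y' \to B$; when $Y'$ is not already a regular del Pezzo surface, one contracts to a canonical del Pezzo surface $T$ of Picard rank $1$ with a single singular point, and then uses \autoref{l-Cartier-index} to bound the Cartier index of $T$ by $2$. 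Combined with the Frobenius-length bound from \cite[Theorem 3.7]{BT22} and \cite[Theorem 5.12]{Tan21}, this bounds the Cartier index of $K_V$ on the normalized base change $V$, and \autoref{lem: Cartier_index_weigthed} then pins down $V \cong \mathbb{P}(1,1,d)$ for finitely many $d$, which is how the explicit constants $12$ and $16$ arise. Your proposal identifies the sharp constants as the ``main obstacle'' but does not supply this Cartier-index mechanism, so the gap you anticipate is real: without it, running a bend-and-break or MMP argument on $Y$ over $\overline{k}$ will only yield a qualitative bound, not the stated numerics. The restriction $p \in \{2,3\}$ and the values of $\ell_F(X/k)$ are also cited from \cite[Theorem 3.7]{BT22} rather than derived by iterating Tate's formula, though that part of your plan is at least in the right spirit.
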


\begin{proof}
We can assume $k$ to be separably closed and we will repeatedly use the fact that $\epsilon(X/k)$ is a $k$-birational invariant \cite[Proposition 7.10]{Tan21}.
If $X$ is geometrically normal, then we conclude by \autoref{prop: geom-normal-canonical-dP}. 
So we suppose that $X$ is not geometrically normal and $p =2,3$ by \cite[Theorem 3.7.(1)]{BT22} .
The bounds on $\ell_F(X/k)$ are proven in \cite[Theorem 3.7.(2)-(3)]{BT22}.

Let $Z \to X$ be the minimal resolution of $X$. 
As $Z$ is a regular weak del Pezzo surface, by \autoref{l-image} a $K_Z$-MMP will end with a regular weak del Pezzo surface $Y$ admitting a Mori fibre space $f \colon Y \to B$, i.e. $f$ is a contraction where $-K_Y$ is $f$-ample and $\dim(B) \leq 1$.
Note that $K_Y^2 \geq K_Z^2=K_X^2$.

If $B=\Spec(k)$, then $Y$ is a regular del Pezzo surface and we conclude by \cite[Theorem 4.7]{Tan19}.
If $B$ is a curve, as $Y$ is weak del Pezzo, the cone theorem \cite[Theorem 2.14]{Tan18b} implies that the Mori cone of $Y$ is $$\NE(Y)=\mathbb{R}_{+}[F]+\mathbb{R}_{+}[\Gamma],$$ 
where $F$ the class of a closed fibre of $f$ and $\Gamma$ is the class of an integral curve with self-intersection $\Gamma^2 \leq 0$.
If $K_Y \cdot_k \Gamma<0$, then $Y$ is a regular del Pezzo surface by Kleiman's criterion and we conclude again by \cite[Theorem 4.7]{Tan19}.

If $K_Y \cdot_k \Gamma=0$, by the Hodge index theorem $\Gamma^2<0$ and, if we denote $k_{\Gamma}=H^0(\Gamma, \mathcal{O}_{\Gamma})$, by adjunction the equality $\Gamma^2=\deg_k \omega_{\Gamma/k}=-2[k_{\Gamma}:k]$ holds. Then there exists a birational contraction $Y \to T$ where $T$ is a canonical del Pezzo surface of Picard rank 1 with a unique singular point $x$ and $K_T^2=K_Y^2$. 
As $k_\Gamma=k(x)$ by \cite[Corollary 10.10]{kk-singbook}, we have $\Gamma^2=-2[k(x):k]$, which implies that the Cartier index of $T$ divides $2$ by \autoref{l-Cartier-index}.
If $T$ is geometrically normal, it is geometrically canonical by \cite[Theorem 3.7]{BT22}. Moreover, 
as $T_{\overline{k}}$ has Picard rank 1 and a singular point, we conclude $K_X^{2} \leq 8$.
If $T$ is not geometrically normal and $g \colon V=(T \times_k \overline{k})_{\red}^{\norm} \to T$ is the normalised base change where $K_V+(p-1)C=g^*K_T$ we deduce that $2p^{\ell_F(T/k)} K_V$ is Cartier by \cite[Theorem 5.12]{Tan21}.
By the classification of the normalised base changes of canonical del Pezzo surfaces with Picard rank 1 \cite[Theorem 4.1]{PW}, the bounds on the Frobenius length \cite[Theorem 3.7]{BT22} and \autoref{lem: Cartier_index_weigthed} we deduce the following:
\begin{itemize}
    \item\label{Case: p=3-volume} if $p=3$, then $6K_V$ is Cartier and thus $V \simeq \mathbb{P}(1,1,d)$ for $d \in \left\{1, 2, 3, 4, 6, 12\right\}$ and $C =L$;
    \item\label{Case: p=2-volume} if $p=2$, then $8K_V$ is Cartier and thus $V \simeq \mathbb{P}(1,1,d)$ for $d \in \left\{1, 2, 4, 8, 16\right\}$ and $C=L$ or $2L$ by \cite[Proposition 4.1]{Tan19}.
\end{itemize}
Using \cite[Lemma 4.5]{Tan19} we have 
$$p^{\epsilon(X/k)}(g^*K_T)^2=p^{\epsilon(X/k)} (K_V+(p-1)C)^2 = K_T^2.$$
If $p=3$, we have $V=\mathbb{P}(1,1,d)$, $C=L$ and thus 
$K_T^2 \leq (dL)^2 \cdot 3^{\epsilon(X/k)}= d \cdot 3^{\epsilon(X/k)} \leq 12 \cdot 3^{\epsilon(X/k)} $.
Similarly, in the case where $p=2$ we obtain that $K_T^2 \leq 16 \cdot 2^{\epsilon(X/k)}$.
\end{proof}

Using the bounds on the anticanonical volume, we can restrict the possibilities for the normalised base changes of non-normal canonical del Pezzo surfaces obtained in \cite[Theorem 4.1]{PW}. 
For the analogous result in the regular case, see \cite[Theorem 4.6]{Tan19}.

\begin{theorem}\label{t-recap}
Let $X$ be a canonical del Pezzo surface.
Let $\nu \colon Y \to (X \times_k \overline k)_{\red}$ be the normalisation morphism and let $f \colon Y \to X \times_k \overline k $
be the composite morphism. 
\begin{enumerate}
\item If $X$ is geometrically normal, then it is geometrically canonical. 
\item If $p \geq 5$, then $X$ is geometrically normal.
\item If $p=3$ and $X$ is not geometrically normal, then $\ell_{F}(X/k) = 1$  and $(Y, C)$ is isomorphic to $(\mathbb{P}(1,1,d), L)$ for some $d \leq 12$.
\item 
If $p=2$ and $X$ is not geometrically normal, then $\ell_{F}(X/k) \in \left\{1,2\right\}$ and $(Y,C)$ is isomorphic to one of the following:
\begin{enumerate}
    \item $(\mathbb{P}^2,L)$ and $\ell_F(X/k)=1$;
    \item $(\mathbb{P}^2, C \in |2L|)$;
    \item \label{item:weighted} $(\mathbb{P}(1,1,d),2L)$ for $2 \leq d \leq 16$.
    \item $(\mathbb{P}^1 \times \mathbb{P}^1, C \in |F_1+F_2|)$ and $\ell_F(X/k)=1$;
    \item $(\mathbb{P}^1 \times \mathbb{P}^1, F_i)$ and $\ell_F(X/k)=1$;
    \item \label{item: 1st_hirzebruch} $(\mathbb{F}_d, D \in |C_d+F|)$, where $C_d$ is the negative section and $\ell_F(X/k)=1$ for $1 \leq d \leq 14$;
    \item \label{item: 2nd_hirzebruch} $(\mathbb{F}_d, C_d)$ and $\ell_F(X/k)=1$ for $1 \leq d \leq 12$;
    \end{enumerate}
\end{enumerate}
\end{theorem}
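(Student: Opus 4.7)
The plan is to assemble the theorem from the preceding material in this section together with the classification of normalised base changes established in \cite[Theorem 4.1]{PW}. Part (1) is exactly \autoref{prop: geom-normal-canonical-dP}, and part (2) is \cite[Theorem 3.7.(1)]{BT22}. The assertions about $\ell_F(X/k)$ in (3) and (4), including the subcases that force $\ell_F(X/k) = 1$ in characteristic $2$, are read off directly from \cite[Theorem 3.7.(2)-(3)]{BT22}.

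For the classification of the pairs $(Y,C)$, my approach is to start from the raw list of candidates provided by \cite[Theorem 4.1]{PW} (whose input is that $K_Y + (p-1)C = f^* K_X$ with $f^*K_X$ anti-nef and big, pinning $Y$ down to a weighted projective plane or a Hirzebruch surface) and then trim the range of $d$ by using the volume bounds of \autoref{p-bound-volume-canonical-dP}. The central computational tool is the identity
\[
K_X^2 = p^{\epsilon(X/k)}\bigl(K_Y + (p-1)C\bigr)^2
\]
of \cite[Lemma 4.5]{Tan19}: for each candidate one computes $(K_Y + (p-1)C)^2$ explicitly as a function of $d$ and compares it against $12 \cdot 3^{\epsilon(X/k)}$ in characteristic $3$, respectively $16 \cdot 2^{\epsilon(X/k)}$ in characteristic $2$, to cut off the allowable range.

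Concretely, on $\mathbb{P}(1,1,d)$ one has $K_Y \sim -(d+2)L$ and $L^2 = 1/d$, so $(K_Y + (p-1)C)^2 = d$ whenever $C = L$ in characteristic $3$ or $C = 2L$ in characteristic $2$, yielding $d \leq 12$ in case (3) and $d \leq 16$ in case (4)(iii). On a Hirzebruch surface $\mathbb{F}_d$ with $K_Y \sim -2C_d - (d+2)F$, using $C_d^2 = -d$, $C_d \cdot F = 1$, $F^2 = 0$, one computes $(K_Y + C_d + F)^2 = d+2$ and $(K_Y + C_d)^2 = d+4$; combined with $K_X^2 \leq 16 \cdot 2^{\epsilon(X/k)}$ this produces $d \leq 14$ in case (4)(f) and $d \leq 12$ in case (4)(g). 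The remaining items in (4), where $Y = \mathbb{P}^2$ or $\mathbb{P}^1 \times \mathbb{P}^1$, carry no parameter $d$ and need no truncation; the lower bounds $d \geq 2$ in (4)(iii) and $d \geq 1$ in (4)(f)--(g) simply record that $\mathbb{P}(1,1,1) = \mathbb{P}^2$ and $\mathbb{F}_0 = \mathbb{P}^1 \times \mathbb{P}^1$ are covered in the earlier items.

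The main work is the per-case bookkeeping, especially matching the correct value of $(K_Y+(p-1)C)^2$ against the correct volume bound in each of the seven cases of (4), and confirming compatibility with the forced values of $\ell_F(X/k)$ dictated by \cite[Theorem 3.7]{BT22}. The one potentially subtle point is that the volume bound from \autoref{p-bound-volume-canonical-dP} must be applied to the actual surface $X$ (not an MMP endpoint produced in its proof), which is legitimate since both sides of the displayed identity are computed on $X$ itself, and $\epsilon(X/k)$ is a $k$-birational invariant by \cite[Proposition 7.10]{Tan21}.
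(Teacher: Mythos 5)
Your proof follows the same route as the paper: reduce parts (1)--(2) and the $\ell_F$ bounds to \autoref{prop: geom-normal-canonical-dP}, \cite[Theorem~3.7]{BT22} and \autoref{p-bound-volume-canonical-dP}, take the raw list of normalised base changes from \cite[Theorem~4.1]{PW} (equivalently \cite[Proposition~4.1]{Tan19}), and truncate $d$ via the identity $K_X^2 = p^{\epsilon(X/k)}(K_Y+(p-1)C)^2$ together with the volume bounds, and your per-case intersection computations on $\mathbb{P}(1,1,d)$ and $\mathbb{F}_d$ match the paper's. The only small imprecision is bibliographic: the case-by-case assertions that $\ell_F(X/k)=1$ in subitems (4)(a), (d)--(g) come from the classification in \cite[Proposition~4.1]{Tan19} rather than from \cite[Theorem~3.7]{BT22}, which only gives the global bound $\ell_F(X/k)\le 2$.
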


\begin{proof}
By \autoref{p-bound-volume-canonical-dP}, we are only left to prove the classification in (3) and (4).
Suppose $p=3$. 
The only possible normalised base change is $\mathbb{P}(1,1,d)$ by \cite[Proposition 4.1 and Remark 4.3]{Tan19}.
However, by \autoref{p-bound-volume-canonical-dP}, we have $K_X^2=p^{\epsilon(X/k)} d \leq 12 \cdot p^{\epsilon(X/k)} $.

Suppose $p=2$. The list of possibilities without the bounds on $d$ is proved in \cite[Proposition 4.1]{Tan19}.
It is now sufficient to note that in Case \eqref{item: 1st_hirzebruch} $K_X^2=p^{\epsilon(X/k)}(d+2)$, in Case \eqref{item: 2nd_hirzebruch} $K_X^2=p^{\epsilon(X/k)}(d+4)$, and in Case \eqref{item:weighted} $K_X^2=p^{\epsilon(X/k)}d$.
Using \autoref{p-bound-volume-canonical-dP} we deduce the desired bounds on $d$.
\end{proof}

\subsection{Effective Kodaira vanishing and very ampleness on del Pezzo surfaces}

In this section we prove an effective version of the Kawamata--Viehweg vanishing theorem on canonical del Pezzo surfaces.
From this we deduce bounds on the effective global generation and very ampleness for the anti-pluricanonical linear systems. 

We start by giving an effective version of \cite[Theorem 1.9]{PW} in the 2-dimensional case.

\begin{proposition}\label{p-bound-b&b}
Let $X$ be a canonical del Pezzo surface and let $A$ be a big and nef Cartier divisor on $X$. Then
\begin{enumerate}
    \item if $p>3$, then $H^1(X, \mathcal{O}_X(-A))=0$;
    \item if $p=3$, then $H^1(X, \mathcal{O}_X(-dA))=0$ if $d \geq 2$;
    \item if $p=2$, then $H^1(X, \mathcal{O}_X(-dA))=0$ if $d \geq 4$.
\end{enumerate}
If $X$ is a normal Gorenstein del Pezzo surface, the same results hold if $A$ is ample.
\end{proposition}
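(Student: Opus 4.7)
The plan is to argue by contradiction, combining Ekedahl's construction of $\alpha$-torsors \cite{Eke88} with the classification of non-geometrically-normal canonical del Pezzo surfaces from \autoref{t-recap} and the volume bounds of \autoref{p-bound-volume-canonical-dP}. I first split by the characteristic: when $p > 3$, \autoref{t-recap}(2) shows that $X$ is geometrically normal, hence geometrically canonical by \autoref{prop: geom-normal-canonical-dP}; flat base change to $\overline{k}$ then reduces the vanishing of $H^1(X, \mathcal{O}_X(-A))$ to Kawamata--Viehweg vanishing on canonical del Pezzo surfaces over algebraically closed fields of characteristic at least $5$, which is classical. The substance of the proposition is therefore the case $p \in \{2, 3\}$.

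For $p \in \{2, 3\}$, assume toward contradiction that $H^1(X, \mathcal{O}_X(-dA)) \neq 0$ for some $d$ strictly below the claimed threshold. Since $A$ is big and nef Cartier (ample, in the Gorenstein case), Serre vanishing yields a maximal integer $e \geq 0$ with $H^1(X, \mathcal{O}_X(-dp^e A)) \neq 0$. Writing $M := \mathcal{O}_X(-dp^e A)$, the Frobenius pullback
\[
F^{*} \colon H^1(X, M) \longrightarrow H^1(X, M^{\otimes p}) = 0
\]
has non-trivial kernel, and a non-zero kernel class classifies a non-trivial $\alpha_M$-torsor $\pi \colon \widetilde X \to X$. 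This is a finite purely inseparable degree $p$ cover whose dualising sheaf satisfies
\[
\omega_{\widetilde X} \;\cong\; \pi^{*}\bigl(\omega_X \otimes M^{\otimes -(p-1)}\bigr) \;\cong\; \pi^{*}\mathcal{O}_X\bigl(K_X + d p^e (p-1)\, A\bigr).
\]

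To obstruct the existence of $\widetilde X$, I pass to the normalisation $\widetilde X^{\nu} \to \widetilde X$: the induced map $\pi^{\nu} \colon \widetilde X^{\nu} \to X$ is still a finite universal homeomorphism, so $\widetilde X^{\nu}$ is $k$-birational to the normalised base change of $X$ along $k \hookrightarrow k^{1/p^{\infty}}$. By \autoref{t-recap}, this normalised base change is one of $\mathbb{P}^{2}$, $\mathbb{P}(1,1,d')$, $\mathbb{P}^{1} \times \mathbb{P}^{1}$, or $\mathbb{F}_{d'}$ with $d'$ explicitly bounded. Cross-referencing the formula for $\omega_{\widetilde X}$ above with Tate's conductor relation $K_Y + (p-1) C = f^{*} K_X$ on the normalised base change, and with the volume bound from \autoref{p-bound-volume-canonical-dP}, pins the data $(d, p^e, K_X^2, \epsilon(X/k))$ down to a short list of possibilities for the model $(Y, C)$, and a direct numerical check on each of these rules out the existence of $\widetilde X$ whenever $d$ is below the stated threshold.

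I expect the main obstacle to lie in this final numerical step: the $(p-1)$-factor from the $\alpha$-torsor formula interacts delicately with the $(p-1)$-factor from Tate's conductor formula, and the exact thresholds $d \geq 2$ (for $p = 3$) and $d \geq 4$ (for $p = 2$) are precisely the smallest values for which the two can be reconciled for every admissible thickening exponent $\epsilon(X/k)$ and every entry of the classification list in \autoref{t-recap}. Finally, the Gorenstein (non-canonical) extension is handled by \autoref{l-HW}, which reduces to an elliptic-cone surface; here ampleness of $A$ (rather than just big and nef) is used to guarantee strict positivity of $A$ on the negative section of the ruled resolution, after which the same Frobenius-kernel argument applies verbatim.
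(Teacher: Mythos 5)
The key step in your argument fails at the identification of the $\alpha$-torsor with the normalised base change of $X$. You write that since $\pi^{\nu}\colon \widetilde X^{\nu}\to X$ is a finite universal homeomorphism, $\widetilde X^{\nu}$ must be $k$-birational to the normalised base change of $X$ along $k\hookrightarrow k^{1/p^{\infty}}$. This does not follow: the function field $K(\widetilde X)$ is a degree-$p$ purely inseparable extension of $K(X)$ inside $K(X)^{1/p}$, but $K(X)^{1/p}$ is much larger than $K(X)\cdot k^{1/p}$ (already for $X$ a curve), and there are many such subextensions. The torsor depends on the class $\zeta\in H^{1}_{\mathrm{fppf}}(X,\alpha_{M})$ and is a covering of $X$, not a descent of $X$; it is not intrinsic to $X$ in the way the normalised base change is. Concretely, when $X$ is already geometrically normal (which is allowed in this proposition), the normalised base change along $k^{1/p^{\infty}}$ is just the ordinary base change, yet $\widetilde X\to X$ can still be non-trivial. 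So \autoref{t-recap} cannot be invoked to classify $\widetilde X^{\nu}$, and the planned ``numerical check'' has no list to check against.

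The paper avoids this entirely. After producing the purely inseparable degree-$p$ cover $Z\to X$ with $\omega_{Z}=\pi^{*}\bigl(\omega_{X}(-(p-1)dA)\bigr)$, it does \emph{not} try to identify $Z$ with anything classified; instead it normalises $Z$, passes to the normalised base change $Y$ of $Z^{\mathrm{norm}}$ to $\overline k$, collects the ramification into an effective divisor so that
$K_{Y}+(p-1)D+\mu^{*}\Gamma = f^{*}\bigl(K_{X}-(p-1)dA\bigr)$,
and applies Mori's bend-and-break to a general curve on $Y$. Since $-(K_{Y}+(p-1)D+\mu^{*}\Gamma)$ is big and nef and $-K_{X}$ is ample, bend-and-break yields a rational curve $L_{x}$ with $f^{*}((p-1)dA)\cdot L_{x}<4$, forcing $(p-1)d\leq 3$. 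This single inequality gives all three cases at once (including $p>3$, which you treat separately by reduction to Kawamata--Viehweg over $\overline k$ — that reduction is fine, but unnecessary here). Your instinct to use the torsor and the thresholds $(p-1)d\leq 3$ is exactly right; what is missing is the bend-and-break mechanism that converts the positivity data into that numerical bound without needing to classify the cover.
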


\begin{proof}
We let $\mathcal{A}_m=\mathcal{O}_X(mA)$ for $m \in \mathbb{Z}$. 
We fix $d>0$. 
We show that  $H^1(X, \mathcal{A}_{-dn})=0$ for $n$ sufficiently large. If $A$ is ample, we conclude by Serre duality and Serre vanishing. 
If $A$ is only big and nef and $X$ is a canonical del Pezzo surface, by the base point free theorem \cite[Proposition 2.1]{Ber21} there is a birational contraction $\pi \colon X \to Y$ such that $A=\pi^*H$, where $H$ is an ample Cartier divisor and by \autoref{l-image} $Y$ is a del Pezzo surface with canonical singularities. Thus the singularities of $Y$ are rational by \autoref{p-klt-rational} and the projection formula implies 
$H^1(X, \mathcal{A}_{-dn})=H^1(Y, \mathcal{H}_{-dn})$.
As $Y$ is a normal, we can apply Serre duality to deduce $H^1(Y, \mathcal{H}_{-dn}) \simeq H^1(Y, \omega_X \otimes \mathcal{H}_{dn})$, which vanishes for $n$ large enough by Serre vanishing. 

Suppose $H^1(X,\mathcal{A}_{-d}) \neq 0$.
 Without loss of generality by the previous paragraph, we can assume $F^*\colon H^1(X,\mathcal{A}_{-d}) \to H^1(X,\mathcal{A}_{-pd})$ has a non-trivial element $\zeta$ in the kernel $H^1_{\fppf}(X, \alpha_{\mathcal{A}_{-d}})$.  
By \cite[Theorem 2.11]{PW}, associated to $\zeta$ there exists a degree $p$ purely inseparable morphism $\pi \colon Z \to X$ such that $Z$ is an integral Gorenstein surface with $\omega_Z=\pi^*(\omega_X(-(p-1)dA))$.
Let $\nu \colon Z^{\norm} \to Z$ be the normalisation and let $\mu \colon Y:=(Z^{\norm} \times_k \overline{k})_{\red}^{\norm} \to Z^{\norm}$ be the normalised base change to the algebraic closure. 
We denote by $\Gamma$ the divisorial part of the ramification locus. We have $\mathcal{O}_{Z^{\norm}}(K_{Z^{\norm}}+\Gamma)=\nu^*(\omega_Z)$ and there exists an effective Weil divisor $D \geq 0$ such that $K_Y+(p-1)D = \mu^*K_{Z^{\norm}}$ by \cite[Theorem 1.1]{PW}, we conclude
 $$K_Y+(p-1)D+\mu^*\Gamma= f^*(K_X-(p-1)dA),$$
 where $f =\pi \circ \nu \circ \mu$.
 Consider a general curve $C$ on $Y$ of genus $g \geq 1$ so that $C$ is contained in the smooth locus of $Y$ and $C \cdot ((p-1)D+\mu^*\Gamma) \geq 0$.
 Therefore $K_Y \cdot C<0$, and the bend and break lemma \cite[Chapter II, Theorem 5.8]{Kol96} shows that for every point $x \in C$ there exists a rational curve $L_x$ such that
 $$-(K_Y+(p-1)D+\mu^*\Gamma) \cdot L_x \leq 4 \frac{-(K_Y+(p-1)D+\mu^*\Gamma)\cdot C}{-K_Y \cdot C} \leq 4, $$
 as $-(K_Y+(p-1)D+\mu^*\Gamma)$ is big and nef. 
 Since $-K_X$ is ample, we infer the inequality:
 $$ f^*((p-1)dA) \cdot L_x < f^*(-K_X+(p-1)dA) \cdot L_x \leq 4. $$
As $A=\pi^*H$ where $H$ is an ample Cartier divisor and $x$ is a general point on $C$, we have $f^*A \cdot L_x \geq 1$ and thus we have $(p-1)d \leq 3$, which concludes the proof.
\end{proof}

\begin{lemma}\label{l-gg-dp}
Let $X$ be a canonical del Pezzo surface such that $X$ is not geometrically normal. 
Let $A$ be a big and nef Cartier divisor on $X$. Then
\begin{enumerate}
    \item if $p=3$, then $\mathcal{O}_X(3A)$ is globally generated;
    \item if $p=2$ and $\ell_F(X/k)=1$, then $\mathcal{O}_X(2A)$ is globally generated;
    \item if $p=2$ and $\ell_F(X/k)=2$, then $\mathcal{O}_X(4A)$ is globally generated.
\end{enumerate}
\end{lemma}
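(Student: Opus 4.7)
The plan is to combine the effective Kawamata--Viehweg vanishing of \autoref{p-bound-b&b} with a restriction to an anticanonical curve. After replacing $k$ by its separable closure, fix a closed point $x \in X$; we must exhibit a section of $\mathcal{O}_X(mA)$ non-vanishing at $x$, where $m = p^{\ell_F(X/k)}$ matches each of the three cases.

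First I would try to find a member $C \in |-K_X|$ passing through $x$. Using Riemann--Roch one has $\chi(X,-K_X) = K_X^2 + 1 - h^1(X,\mathcal{O}_X)$, and together with the Serre dual vanishing $H^2(X,-K_X) = H^0(X,2K_X) = 0$ this gives $h^0(X,-K_X) \geq 2$, allowing $C$ to be chosen through $x$ (a separate check handles possible base points in the small-volume cases, using \autoref{p-bound-volume-canonical-dP} and the classification of \autoref{t-recap}). By adjunction $C$ is a Gorenstein curve of arithmetic genus $1$, and $\deg(mA|_C) = m(-K_X \cdot A) \geq m \geq 2$, so $\mathcal{O}_C(mA|_C)$ is globally generated (a line bundle of degree at least $2p_a$ on a Gorenstein genus-$1$ curve is globally generated). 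From the restriction sequence
\[
0 \to \mathcal{O}_X(mA + K_X) \to \mathcal{O}_X(mA) \to \mathcal{O}_C(mA|_C) \to 0
\]
a generating section on $C$ lifts to a section on $X$ provided $H^1(X, \mathcal{O}_X(mA + K_X)) = 0$. By Serre duality (valid since $X$ is Gorenstein) this equals $H^1(X, \mathcal{O}_X(-mA))^{\vee}$, which vanishes by \autoref{p-bound-b&b} whenever $m \geq 2$ (if $p=3$) or $m \geq 4$ (if $p=2$). This settles cases (1) and (3) at once.

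Case (2) is the main obstacle, because here $m = 2 < 4$ and the vanishing above is unavailable. The plan is to bring in the explicit classification of \autoref{t-recap}(4): the normalized base change $f \colon Y \to X_{\bar k}$ has $Y \in \{\mathbb{P}^2, \mathbb{P}(1,1,d), \mathbb{P}^1\times\mathbb{P}^1, \mathbb{F}_d\}$, and the hypothesis $\ell_F(X/k) = 1$ means that $f$ is a height-one purely inseparable morphism of degree $p=2$. On each such $Y$, the pullback $f^*\mathcal{O}_X(2A)$ is a big and nef Cartier divisor whose global generation can be read off directly from the very explicit Picard group of $Y$. The height-one assumption then allows one to factor the relative Frobenius through $f$ (up to a Frobenius twist of $Y$), so that $\mathcal{O}_X(2A) = F^{*}\mathcal{O}_X(A)$ fits into a diagram involving $f_*f^*\mathcal{O}_X(A)$ where the global generation on $Y$ forces global generation downstairs.

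The principal difficulty will be making this descent in case (2) precise: converting the explicit global generation on the normalized base change $Y$ into global generation on $X_{\bar k}$ under the degree-$p$ purely inseparable morphism, while keeping track of the conductor/ramification divisor (especially in the weighted and Hirzebruch cases). A secondary but more routine point is ensuring that $|-K_X|$ has a member through each closed point $x$ in the smallest-volume cases, which can be handled separately using the explicit bounds of \autoref{p-bound-volume-canonical-dP} and the classification of \autoref{t-recap}.
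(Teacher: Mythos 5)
There is a genuine gap in your approach to cases (1) and (3). You want an anticanonical curve $C \in |-K_X|$ through an arbitrary closed point $x$, and you try to secure $h^0(X,-K_X) \geq 2$ from Riemann--Roch. But Riemann--Roch only gives $\chi(X,-K_X) = K_X^2 + 1 - h^1(X,\mathcal{O}_X)$, so $h^0(X,-K_X) = K_X^2 + 1 - h^1(X,\mathcal{O}_X) + h^1(X,-K_X)$; without any control on $h^1(X,\mathcal{O}_X)$ you cannot conclude this is $\geq 2$. The present lemma carries no geometric integrality hypothesis and feeds (via \autoref{c-gg-dp-int}) into \autoref{p-bound-irregularity-normal}, which is where the bounds on $h^1(X,\mathcal{O}_X)$ are eventually established, so invoking such a bound here would be circular. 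There are further wrinkles down the same road ($C$ may be disconnected or have $h^0(\mathcal{O}_C) > 1$ when $h^1(X,\mathcal{O}_X) \neq 0$, and base points of $|-K_X|$ may be non-rational), but the nonexistence of the pencil is already a blocking issue.

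Your sketch for case (2) is, modulo precision, what the paper actually does — and it does it uniformly for all three cases, which removes the need for the anticanonical-curve argument entirely. By \cite[Theorem 5.9]{Tan21} the $\ell_F(X/k)$-fold absolute Frobenius of $X \times_k \overline{k}$ factors as $X_{\overline{k}} \to (X_{\overline{k}})^{\mathrm{norm}}_{\mathrm{red}} \xrightarrow{\mu} X_{\overline{k}}$; by \autoref{t-recap} the middle term is a (simplicial) toric surface, so the big and nef Cartier divisor $\mu^*A$ is globally generated there; pulling back a globally generated line bundle stays globally generated, so $p^{\ell_F(X/k)}A$ is globally generated on $X_{\overline{k}}$, and one descends to $X$. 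Note $p^{\ell_F(X/k)}$ gives exactly $3,2,4$ in cases (1),(2),(3) respectively, so the same two-line argument proves all three cases at once. No effective vanishing from \autoref{p-bound-b&b}, no anticanonical pencil, and no tracking of the conductor/ramification divisor is required.
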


\begin{proof}
The proof is the same as \cite[Theorem 3.5]{Tan19}.
There is a factorisation of the iterated Frobenius morphism by \cite[Theorem 5.9]{Tan21}:
$$F^{\ell_F(X/k)}_{X \times_k \overline{k}} \colon X \times_k \overline{k} \to (X \times_k \overline{k})_{\red}^{\norm} \xrightarrow{\mu} X \times_k \overline{k},$$
where $(X \times_k \overline{k})_{\red}^{\norm}$ is a toric variety by \autoref{t-recap}. Thus $\mu^*A$ is globally generated and also $(F^\ell_F(X/k)_{X \times_k \overline{k}})^*A=A^{p^{\ell_F(X/k)}}.$
\end{proof}

We recall the following very ampleness criterion for line bundles.
For the notion of Castelnuovo--Mumford regularity and its basic properties we refer to \cite[Section 1.8]{Laz04}.

\begin{proposition}[{\cite[Lemma 11.2]{Tan21}}]\label{p-very-ampleness-0}
Let $X$ be a geometrically irreducible $k$-projective variety of dimension $n$.
Let $\mathcal{A}$ be a globally generated ample line bundle and suppose $\mathcal{L}$ is an ample line bundle which is $0$-regular with respect to $\mathcal{A}$.
Then $\mathcal{A} \otimes \mathcal{L}$ is very ample.
\end{proposition}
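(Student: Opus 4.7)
The plan is to combine Mumford's theorem on Castelnuovo--Mumford regularity (\cite[Theorem 1.8.3]{Laz04}) with a factorisation of $\varphi_{\mathcal{A}\otimes\mathcal{L}}$ through a Segre embedding of a product of projective spaces, reducing very ampleness to the closed immersion property of the product morphism $(\varphi_{\mathcal{A}},\varphi_{\mathcal{L}})$.

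First, the $0$-regularity of $\mathcal{L}$ with respect to the globally generated ample line bundle $\mathcal{A}$ yields, via Mumford's theorem, that $\mathcal{L}$ is globally generated and that the multiplication map
\[
\mu\colon H^0(X,\mathcal{A}) \otimes_k H^0(X,\mathcal{L}) \twoheadrightarrow H^0(X,\mathcal{A} \otimes \mathcal{L})
\]
is surjective. In particular, $\mathcal{A}\otimes\mathcal{L}$ is globally generated and defines a morphism $\varphi_{\mathcal{A}\otimes\mathcal{L}} \colon X \to \mathbb{P}_k(H^0(X,\mathcal{A}\otimes\mathcal{L}))$.

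Second, I exploit $\mu$. The product morphism $\varphi := (\varphi_{\mathcal{A}},\varphi_{\mathcal{L}}) \colon X \to \mathbb{P}_k(H^0(\mathcal{A})) \times_k \mathbb{P}_k(H^0(\mathcal{L}))$, composed with the Segre embedding into $\mathbb{P}_k(H^0(\mathcal{A}) \otimes H^0(\mathcal{L}))$, lands in the linear closed subscheme $\mathbb{P}_k(H^0(\mathcal{A}\otimes\mathcal{L})) \hookrightarrow \mathbb{P}_k(H^0(\mathcal{A}) \otimes H^0(\mathcal{L}))$ cut out by the surjection $\mu$, and the induced morphism to $\mathbb{P}_k(H^0(\mathcal{A}\otimes\mathcal{L}))$ coincides with $\varphi_{\mathcal{A}\otimes\mathcal{L}}$. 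Since both the Segre embedding and the inclusion of a linear subscheme are closed immersions, it is enough to prove that the product morphism $\varphi$ is itself a closed immersion.

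Third, I would check that $\varphi$ is a closed immersion. The morphism $\varphi$ is proper (as $X$ is projective) and has finite fibres, the first projection being $\varphi_{\mathcal{A}}$, which is finite since $\mathcal{A}$ is globally generated and ample; hence $\varphi$ is finite. Upgrading this to a closed immersion means verifying that $\varphi$ separates length-two closed subschemes. If $\varphi_{\mathcal{A}}$ (or $\varphi_{\mathcal{L}}$) already separates such a subscheme, one combines a suitable section in $H^0(\mathcal{A})$ with a generic section in $H^0(\mathcal{L})$ and descends via $\mu$ to produce a section of $\mathcal{A}\otimes\mathcal{L}$ witnessing the separation; the tangent vector analysis is analogous.

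The main obstacle is the delicate case where neither $\varphi_{\mathcal{A}}$ nor $\varphi_{\mathcal{L}}$ individually separates a length-two subscheme $Z \subset X$. Ruling this out is precisely where the $0$-regularity hypothesis $H^1(X,\mathcal{L} \otimes \mathcal{A}^{-1}) = 0$ and the ampleness of $\mathcal{L}$ must both intervene: combining the short exact sequence associated to $I_Z$ with the iterated Mumford regularity (so that $\mathcal{L}\otimes\mathcal{A}^{\otimes m}$ is $0$-regular for every $m \geq 0$) should furnish enough sections of $\mathcal{A}\otimes\mathcal{L}$ to separate $Z$, at which point the proof is complete.
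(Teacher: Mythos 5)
Your reduction is sound and geometrically clean: Mumford's theorem gives the surjection $\mu$, and composing $(\varphi_{\mathcal{A}},\varphi_{\mathcal{L}})$ with the Segre embedding does land in the linear subspace $\mathbb{P}(H^0(\mathcal{A}\otimes\mathcal{L}))$, so it genuinely suffices to show that $\varphi=(\varphi_{\mathcal{A}},\varphi_{\mathcal{L}})$ is a closed immersion. Your treatment of the easy case is also essentially right: if $\varphi_{\mathcal{L}}$ separates a length-two subscheme $Z$, then choosing $s_0 \in H^0(\mathcal{A})$ nowhere vanishing on $Z$ (possible since $\mathcal{A}$ is globally generated and the support of $Z$ is finite), the products $s_0 \otimes t$ for $t \in H^0(\mathcal{L})$ already surject onto $H^0((\mathcal{A}\otimes\mathcal{L})|_Z)$; there is no need to invoke $\mu$ here.

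The genuine gap, which you acknowledge but do not close, is precisely the ``delicate case'' where neither $\varphi_{\mathcal{A}}$ nor $\varphi_{\mathcal{L}}$ separates $Z$. Without further input this case can really occur for two globally generated ample line bundles in general, so the $0$-regularity hypothesis must enter \emph{before} you split into cases, not after. The missing idea is to exploit the finiteness of $\varphi_{\mathcal{A}}\colon X\to\mathbb{P}^N$ and push $\mathcal{L}$ forward: since $\varphi_{\mathcal{A}}$ is finite and affine, the projection formula and the vanishing $H^i(X,\mathcal{L}\otimes\mathcal{A}^{-i})=0$ translate into the sheaf $\mathcal{G}:=(\varphi_{\mathcal{A}})_*\mathcal{L}$ being $0$-regular on $\mathbb{P}^N$, hence globally generated by Mumford's theorem. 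Unravelling what global generation of $\mathcal{G}$ means at a point $p\in\mathbb{P}^N$ gives exactly that $H^0(X,\mathcal{L})\to H^0(\varphi_{\mathcal{A}}^{-1}(p),\mathcal{L})$ is surjective on the \emph{schematic} fibre. Now, if $\varphi_{\mathcal{A}}$ fails to separate a length-two $Z$, the scheme-theoretic image of $Z$ in $\mathbb{P}^N$ is a single reduced point $p$, so $Z$ is a closed subscheme of the fibre $\varphi_{\mathcal{A}}^{-1}(p)$, and surjectivity onto the fibre forces $H^0(X,\mathcal{L})\to H^0(Z,\mathcal{L}|_Z)$ to be surjective. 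In other words, $\mathcal{L}$ always separates whatever $\varphi_{\mathcal{A}}$ fails to separate; your ``case (ii)'' is empty, and you are always in the easy case. Your alternative suggestion -- using $0$-regularity of $\mathcal{L}\otimes\mathcal{A}^{\otimes m}$ and the ideal sheaf sequence for $I_Z$ -- does not obviously lead anywhere for $m=1$ when $\mathcal{A}$ is not very ample, and it is the push-forward trick that makes the argument go through.
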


The following is a generalisation of \cite[Theorem 3.5]{Tan19} including the case of canonical del Pezzo surfaces.

\begin{proposition} \label{t-very-ample}
Let $X$ be a canonical del Pezzo surface such that $X$ is not geometrically normal. Then
\begin{enumerate}
    \item if $p = 3$, then $\omega_X^{ -9}$ is very ample;
    \item if $p=2$ and $\ell_F(X/k)=1$, then $\omega_X^{ -7}$ is very ample;
    \item if $p=2$ and $\ell_F(X/k)=2$, then $\omega_X^{ -12}$ is very ample.
\end{enumerate}
\end{proposition}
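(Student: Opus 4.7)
The plan is to apply the very ampleness criterion \autoref{p-very-ampleness-0} with $\mathcal{A} = \omega_X^{-b}$ and $\mathcal{L} = \omega_X^{-a}$ for suitable positive integers $a,b$, where $b$ is chosen so that $\omega_X^{-b}$ is globally generated by \autoref{l-gg-dp} applied to the ample Cartier divisor $-K_X$: namely $b=3$ in case (1), $b=2$ in case (2), and $b=4$ in case (3). Both $\mathcal{A}$ and $\mathcal{L}$ are ample, and once $\mathcal{L}$ is verified to be $0$-regular with respect to $\mathcal{A}$, the criterion produces the very ampleness of $\mathcal{A} \otimes \mathcal{L} = \omega_X^{-(a+b)}$. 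The problem thus reduces to minimising $a+b$ subject to the two surface vanishings $H^1(X, \omega_X^{b-a}) = 0$ and $H^2(X, \omega_X^{2b-a}) = 0$.

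The $H^2$ vanishing is handled by Serre duality on the Gorenstein surface $X$: one has $H^2(X, \omega_X^{2b-a}) \simeq H^0(X, \omega_X^{a-2b+1})^{\vee}$, and since $-K_X$ is ample, $H^0(X, \omega_X^m) = 0$ for every $m \geq 1$. Hence this vanishing holds as soon as $a \geq 2b$.

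For the $H^1$ vanishing, I would invoke \autoref{p-bound-b&b} with $A = -K_X$, which gives $H^1(X, \omega_X^d) = 0$ for $d \geq 2$ when $p=3$ and for $d \geq 4$ when $p=2$. A direct application in case (2) would only yield the weaker exponent $\omega_X^{-8}$; to reach the sharp $\omega_X^{-7}$, one has to pair this with Serre duality $H^1(X, \omega_X^d) \simeq H^1(X, \omega_X^{1-d})^{\vee}$, which extends the vanishing to $H^1(X, \omega_X^n) = 0$ for all $n \leq -1$ (if $p=3$) and for all $n \leq -3$ (if $p=2$). Taking $n = b - a$, we obtain the required $H^1$ vanishing whenever $a \geq b+1$ ($p=3$) or $a \geq b+3$ ($p=2$).

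Combining the constraint $a \geq 2b$ with the case-dependent inequality on $a-b$, the minimal admissible $a$ equals $6$, $5$ and $8$ in the three respective cases, giving $a + b = 9$, $7$, $12$, which matches the statement. I do not anticipate any serious obstacle beyond carefully tracking these inequalities; the only subtle point is the use of Serre duality to improve upon the direct application of \autoref{p-bound-b&b} in case (2), which is precisely what makes the bound $\omega_X^{-7}$ work instead of $\omega_X^{-8}$.
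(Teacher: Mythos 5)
Your proposal is correct and follows essentially the same route as the paper: choose the globally generated power $\omega_X^{-b}$ ($b = 3,2,4$) via \autoref{l-gg-dp}, verify $0$-regularity of $\omega_X^{-a}$ ($a = 6,5,8$) using \autoref{p-bound-b&b} together with Serre duality for the $H^1$-vanishing, and conclude via \autoref{p-very-ampleness-0}. The identification of the two constraints $a \ge 2b$ and $a-b \ge 1$ (resp.\ $3$) and their minimisation matches the paper's choice of exponents exactly.
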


\begin{proof}
By  \autoref{l-gg-dp} and \autoref{p-very-ampleness-0}  it is sufficient to verify that for $p=3$ (resp. $p=2, \ell_F(X/k)=1$ and $p=2, \ell_F(X/k)=2$) the line bundle $\omega_X^{ -6}$ (resp. $\omega_X^{ -5}$ and $\omega_X^{ -8}$)  is $0$-regular with respect to $\omega_X^{ -3}$ (resp. $\omega_X^{ -2}$ and $\omega_X^{ -4}$) to show the statement. 
We prove only the case $p=2$ and $\ell_F(X/k)=2$  as the others are analogous. 
In this case, $H^1(X, \omega_X^{ -8} \otimes \omega_X^{ 4})=H^1(X, \omega_X^{ -4})=H^1(X, \omega_X^{ 5})=0$ by \autoref{p-bound-b&b} and $H^2(X, \mathcal{O}_X)=H^0(X, \omega_X)=0$. 
\end{proof}

We now show the effective statements on very ampleness for the pluri-anticanonical systems.

\begin{theorem}\label{c-12-v-ample}
Let $X$ be a canonical del Pezzo surface. Then $\omega_X^{ -12}$ is very ample.
\end{theorem}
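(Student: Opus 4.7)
The proof will split according to whether $X$ is geometrically normal. If $X$ is geometrically normal, then by \autoref{prop: geom-normal-canonical-dP} it is geometrically canonical, so $X_{\overline{k}}$ is a Du Val del Pezzo surface over $\overline{k}$. By the classical classification (see e.g.~\cite[Section 8]{Dol12}), $\omega_{X_{\overline{k}}}^{-3}$ is very ample, and hence so is $\omega_{X_{\overline{k}}}^{-12}$. Since very ampleness of a line bundle on a proper $k$-scheme can be checked after the faithfully flat base change $k \to \overline{k}$ (the associated morphism to projective space is a closed immersion iff it is after base change), we deduce that $\omega_X^{-12}$ is very ample on $X$.

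If $X$ is not geometrically normal, then by \autoref{t-recap} we are in one of three subcases. The case $p = 2$, $\ell_F(X/k) = 2$ is already \autoref{t-very-ample}(3) and requires no further argument. For $p = 3$ (so $\ell_F(X/k) = 1$), the plan is to combine \autoref{t-very-ample}(1), giving $\omega_X^{-9}$ very ample, with \autoref{l-gg-dp}(1) applied to the ample Cartier divisor $A = -K_X$, which yields that $\omega_X^{-3}$ is globally generated; since the tensor product of a very ample line bundle with a globally generated one is very ample (via the Segre embedding), $\omega_X^{-9} \otimes \omega_X^{-3} = \omega_X^{-12}$ is very ample.

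The main obstacle is the remaining case $p = 2$ and $\ell_F(X/k) = 1$: here \autoref{t-very-ample}(2) only gives $\omega_X^{-7}$ very ample, while \autoref{l-gg-dp}(2) only provides global generation of $\omega_X^{-2m}$ for $m \geq 1$, so no tensor-product argument can reach the even exponent $12$ starting from the odd exponent $7$. To circumvent this, the plan is to apply the Castelnuovo--Mumford criterion \autoref{p-very-ampleness-0} with $\mathcal{A} = \omega_X^{-4}$ (ample and globally generated by \autoref{l-gg-dp}(2)) and $\mathcal{L} = \omega_X^{-8}$. The $0$-regularity of $\mathcal{L}$ with respect to $\mathcal{A}$ reduces to the two vanishings $H^1(X, \omega_X^{-4}) = 0$ and $H^2(X, \mathcal{O}_X) = 0$. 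The first will follow from \autoref{p-bound-b&b}(3) applied with $A = -K_X$ and $d = 5$ together with Serre duality on the normal Gorenstein surface $X$, namely $H^1(X, \omega_X^{-4}) \cong H^1(X, \omega_X^{5})^{\vee} = 0$; the second is the standard del Pezzo identity $H^2(X, \mathcal{O}_X) \cong H^0(X, \omega_X)^{\vee} = 0$, since $\omega_X$ is anti-ample. The criterion then yields $\omega_X^{-12} = \mathcal{A} \otimes \mathcal{L}$ very ample, completing the proof.
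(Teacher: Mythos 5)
Your proof is correct, and in fact it is more careful than the paper's own argument. The paper disposes of the geometrically non-normal case with the single sentence ``we apply \autoref{t-very-ample}'', but \autoref{t-very-ample} only yields that $\omega_X^{-9}$ (if $p=3$), $\omega_X^{-7}$ (if $p=2$, $\ell_F(X/k)=1$), or $\omega_X^{-12}$ (if $p=2$, $\ell_F(X/k)=2$) is very ample. Very ampleness of $L$ only gives very ampleness of $L^n$ for $n\ge 1$, and neither $9$ nor $7$ divides $12$, so an extra step is genuinely needed in the first two cases. For $p=3$ you correctly supply the standard Segre-type fact that (very ample)$\otimes$(globally generated) is very ample, combined with \autoref{l-gg-dp}(1). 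For $p=2$, $\ell_F=1$, the parity obstruction you identify is real: $\omega_X^{-7}\otimes(\omega_X^{-2})^{\otimes k}$ only produces odd exponents. Your workaround via \autoref{p-very-ampleness-0} with $\mathcal A=\omega_X^{-4}$ and $\mathcal L=\omega_X^{-8}$ is sound: $\omega_X^{-4}$ is globally generated by \autoref{l-gg-dp}(2), $H^1(X,\omega_X^{-4})\cong H^1(X,\omega_X^{5})^\vee=0$ by Serre duality (valid since $X$ is normal Gorenstein) and \autoref{p-bound-b&b}(3) with $d=5$, and $H^2(X,\mathcal O_X)\cong H^0(X,\omega_X)^\vee=0$. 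The one hypothesis of \autoref{p-very-ampleness-0} worth noting is geometric irreducibility, which does hold here since $X$ is normal, proper, and $H^0(X,\mathcal O_X)=k$, so $k$ is algebraically closed (in particular separably closed) in $K(X)$. In summary: same ingredients as the paper, but you make explicit a non-trivial step that the paper leaves implicit and which does not follow immediately from \autoref{t-very-ample} alone.
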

\begin{proof}
If $X$ is geometrically normal, then it is geometrically canonical by \autoref{prop: geom-normal-canonical-dP} and $\omega_X^{\otimes -6}$ is very ample by \cite[Proposition 2.14]{BT22}.
If $X$ is not geometrically normal, we apply \autoref{t-very-ample}.
\end{proof}

\section{Bounds on the irregularity} \label{s-irregularity}

In this section we study geometrically integral geometrically non-normal Gorenstein del Pezzo surfaces $X$.
The additional condition on geometric integrality allows to find additional constraints on the normalised base changes to the algebraic closure and the irregularity of $X$.

\subsection{A bound on $\gamma(X/k)$ for geometrically integral varieties}

Given a geometrically integral normal variety $X$ over $k$, we relate the $\delta$-invariant measuring the singularities in codimension 1 of $X_{\overline{k}}$ with the capacity of denormalising extensions $\gamma(X/k)$ introduced by Tanaka \cite[Section 4]{Tan21}.

\begin{definition}
For an integral $k$-variety $X$ with normalization $\nu \colon Y \to X$ with ramification $C \subseteq Y$ and conductor $D \subseteq X$, we define the \emph{$\delta$-invariant} of $X$ over $k$ as
$$
\delta(X/k) := \max_{\eta \in D} {\rm length}_{\mathcal{O}_{D,\eta}}(\mathcal{O}_{C, \eta} / \mathcal{O}_{D,\eta}),
$$
where $\eta$ runs over all generic points of irreducible components of $D$.
\end{definition}

\begin{proposition} \label{p-delta}
Let $X$ be a geometrically integral normal variety over a field $k$. 
Then
$$
\ell_F(X/k) \leq \gamma(X/k) \leq \delta(X_{\bar{k}}/\bar{k}).
$$
\end{proposition}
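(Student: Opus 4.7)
The plan is to verify the two inequalities separately, in both cases reducing the comparison to data at codimension-one points of $X_{\bar k}$.

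For the first inequality $\ell_F(X/k) \leq \gamma(X/k)$, I would argue directly from Tanaka's framework in \cite[Sections 4--5]{Tan21}. By definition, $\gamma(X/k)$ bounds the exponent of every purely inseparable field extension of $k$ that is capable of ``denormalising'' $X$ in the sense of Tanaka. The extension $k \subset k^{1/p^{\ell_F(X/k)}}$ is by construction the smallest purely inseparable extension for which the successive base changes stop producing new non-normality in the geometric normalisation, so in particular it is a denormalising extension in Tanaka's sense. Thus its exponent is bounded above by the capacity $\gamma(X/k)$, yielding $\ell_F(X/k) \leq \gamma(X/k)$.

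For the second inequality $\gamma(X/k) \leq \delta(X_{\bar k}/\bar k)$, geometric integrality enters crucially. Since $X$ is geometrically integral and normal, the base change $X_{\bar k}$ is integral and satisfies $(S_2)$, so the normalisation morphism $\nu_{\bar k}\colon Y_{\bar k} \to X_{\bar k}$ is an isomorphism outside the (pure codimension one) conductor $D_{\bar k}$, and the $\delta$-invariant is controlled entirely by the local rings $\mathcal{O}_{D_{\bar k}, \eta} \subset \mathcal{O}_{C_{\bar k}, \eta}$ at the finitely many generic points $\eta$ of $D_{\bar k}$. I would then invoke (or verify, from Tanaka's definition) that $\gamma(X/k)$ is similarly codimension-one in nature: the capacity of denormalising extensions can be read off at the generic points of the conductor after base changing to $k^{1/p^\infty}$. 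Localising at such a point $\eta$ gives a one-dimensional local ring situation over a field extension of $\bar k$, in which the successive purely inseparable base changes contributing to $\gamma$ fit inside the conductor tower $\mathcal{O}_{D_{\bar k},\eta} \subset \mathcal{O}_{C_{\bar k},\eta}$. Comparing lengths produces $\gamma(X/k) \leq \mathrm{length}_{\mathcal{O}_{D_{\bar k},\eta}}(\mathcal{O}_{C_{\bar k},\eta}/\mathcal{O}_{D_{\bar k},\eta}) \leq \delta(X_{\bar k}/\bar k)$.

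The hard part will be the second inequality, specifically justifying that the global invariant $\gamma(X/k)$ can be computed from purely local codimension-one data on $X_{\bar k}$, and then correctly accounting for how a purely inseparable extension of $k$ contributes to the length of $\mathcal{O}_{C_{\bar k},\eta}/\mathcal{O}_{D_{\bar k},\eta}$. Geometric integrality is used here to guarantee that $K(X) \otimes_k \bar k$ is a field (namely $\bar k(X_{\bar k})$), so there is a unique generic point after base change and the denormalisation effect is not split among several components — this avoids double-counting and lets the chain of purely inseparable extensions witnessing $\gamma$ be translated faithfully into an increasing chain inside the conductor embedding. Once this translation is set up cleanly, the length comparison is a routine calculation in a one-dimensional local ring.
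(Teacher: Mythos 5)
Your overall strategy matches the paper's: reduce to the generic point $\eta$ of the (irreducible) conductor $D \subseteq X_{\bar k}$, translate the denormalising tower witnessing $\gamma(X/k)$ into an ascending chain of local subrings
$\mathcal{O}_{X_{\bar k},\eta} \subseteq \mathcal{O}_{X_{1,\bar k},\eta} \subseteq \cdots \subseteq \mathcal{O}_{Y,\eta}$
sitting inside the conductor module $\mathcal{O}_{C,\eta}/\mathcal{O}_{D,\eta}$, and compare lengths. For the first inequality the paper likewise cites Tanaka directly.

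The gap is exactly where you flag ``the hard part'' and then declare the remainder a routine length count: you never establish that each inclusion
$\mathcal{O}_{X_{i-1,\bar k},\eta} \subsetneq \mathcal{O}_{X_{i,\bar k},\eta}$
is \emph{strict}, and this is the crux of the proof rather than bookkeeping. The paper's argument for strictness uses two non-obvious inputs that your proposal does not supply: (i) each $X_{i,\bar k}$ is $(S_2)$, being a field base change of the normal variety $X_i$; and (ii) if the local rings at $\eta$ were equal, the map $\nu_i$ would be an isomorphism in codimension one, hence so would $X_i \to X_{i-1,k_i}$ by flat descent, and then normality of $X_i$ plus $(S_2)$ of $X_{i-1,k_i}$ would force $X_{i-1,k_i}$ to be normal — contradicting the defining property that $k_i$ was a denormalising extension. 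Without this, nothing prevents several consecutive steps of the tower from collapsing to the same local ring at $\eta$, and the length bound would fail. You should also make explicit that $\mathcal{O}_{Y,\eta}/\mathcal{O}_{X_{\bar k},\eta}$ is annihilated by the conductor ideal, so that its length as an $\mathcal{O}_{X_{\bar k},\eta}$-module agrees with its length as an $\mathcal{O}_{D,\eta}$-module; this is what lets you identify the chain length with $\delta(X_{\bar k}/\bar k)$.
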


\begin{proof}
The inequality $\ell_F(X/k) \leq \gamma(X/k)$ is shown in \cite[Proposition 8.7]{Tan21}, so we are left to show $\gamma(X/k) \leq \delta(X_{\bar{k}}/\bar{k})$.
As the statement can be checked on an open covering of $X$, we can assume that the conductor $D$ of  $X_{\bar{k}}$ is irreducible, with generic point $\eta$.

By definition of $\gamma(X/k)$ \cite[Definition 4.1]{Tan21}, we can find a sequence of purely inseparable field extensions $k =: k_0 \subseteq k_1 \subseteq \hdots \subseteq k_{\gamma(X/k)}$ such that, if we inductively define $X_0 := X$ and $X_i := (X_{i-1,k_i})^{\norm}$, then $X_{i,{k_{i+1}}}$ is not normal and there is no longer sequence of fields with this property. In particular, $X_{\gamma(X/k)}$ is geometrically normal and the normalization $\nu: Y \to X_{\bar{k}}$ of $X_{\bar{k}}$ factors as
$$
\nu = \nu_1 \circ \cdots \circ \nu_{\gamma(X/k)}: Y = X_{\gamma(X/k),{\bar{k}}} \to \hdots \to X_{0,{\bar{k}}} = X_{\bar{k}}
$$
Note that each $X_{i,{\bar{k}}}$ has the property $(S_2)$, being the base change of a normal variety along a field extension. 

Now, after localizing at $\eta$, the factorization of $\nu$ corresponds to an ascending chain of subrings
$
\mathcal{O}_{X_{\bar{k}},\eta} = \mathcal{O}_{X_{0,{\bar{k}}},\eta} \subseteq \mathcal{O}_{X_{1,{\bar{k}}},\eta} \hdots \subseteq \mathcal{O}_{X_{\gamma(X/k),{\bar{k}}},\eta} = \mathcal{O}_{Y,\eta}.
$
%\FB{Sorry to be pedantic: $\eta$ lives on $X_\overline{k}$, not $Y$. what is the correct formulation?} \GM{I guess it should be $((\nu_1 \circ \hdots \circ \nu_i)_* \mathcal{O}_{X_i,{\bar k}})_{\eta}$, but that is a bit too heavy notation for my taste...}
Each inclusion $\mathcal{O}_{X_{i-1,{\bar{k}}},\eta} \subseteq \mathcal{O}_{X_{i,{\bar{k}}},\eta}$ is strict: otherwise $\nu_i$ would be an isomorphism in codimension $1$, hence so would be $X_i \to X_{i-1,k_i}$. Since $X_i$ is normal and $X_{i-1,k_i}$ has property $(S_2)$, this would imply that $X_{i-1,k_i}$ is normal as well, contradicting our choice of $k_i$.

By definition, % the conductor ideal $\mathcal{C}_{\eta} \subseteq \mathcal{O}_{X_{\bar k},\eta}$ is also an ideal in $\mathcal{O}_{Y,\eta}$. Hence, if we let $C$ be the preimage of $D$ in $Y$, then 
we have isomorphisms of $(\mathcal{O}_{X_{\bar k},\eta})$-modules 
$$
\mathcal{O}_{Y,\eta}/\mathcal{O}_{X_{\bar k},\eta} \cong (\mathcal{O}_{Y,\eta}/\mathcal{C}_{\eta})/(\mathcal{O}_{X_{\bar k},\eta}/\mathcal{C}_{\eta}) \cong \mathcal{O}_{C,\eta}/\mathcal{O}_{D,\eta}
$$
Note that both sides are annihilated by the conductor ideal $\mathcal{C}_{\eta}$, hence this is also an isomorphism of $(\mathcal{O}_{D,\eta})$-modules. Therefore, by strictness of $\mathcal{O}_{X_{i-1,{\bar{k}}},\eta} \subseteq \mathcal{O}_{X_{i,{\bar{k}}},\eta}$ for every $i \leq \gamma(X/k)$, we have
$$
\gamma(X/k) \leq {\rm length}_{\mathcal{O}_{X_{\bar k},\eta}}(\mathcal{O}_{Y,\eta}/\mathcal{O}_{X_{\bar k},\eta})
= {\rm length}_{\mathcal{O}_{D,\eta}}(\mathcal{O}_{C,\eta}/\mathcal{O}_{D,\eta}) = \delta(X_{\bar{k}}/\bar{k}),$$
as claimed. 
\end{proof}

\begin{proposition} \label{c-lleq1}
Let $X$ be a geometrically integral normal Gorenstein variety. Then, $\ell_F(X/k) \leq \gamma(X/k) \leq \delta(X_{\bar{k}}/{\bar k}) = \max_{\eta \in D} {\rm length}_{\mathcal{O}_{D,\eta}}(\mathcal{O}_{D, \eta})$.
In particular, if every component of $D$ is reduced,  then $\ell_F(X/k) \leq 1.$
\end{proposition}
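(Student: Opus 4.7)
The inequalities $\ell_F(X/k) \leq \gamma(X/k) \leq \delta(X_{\bar k}/\bar k)$ are exactly the content of \autoref{p-delta}, so the task reduces to proving the equality
\[
\delta(X_{\bar k}/\bar k) \;=\; \max_{\eta \in D} \mathrm{length}_{\mathcal{O}_{D,\eta}}(\mathcal{O}_{D,\eta})
\]
and then deducing the ``in particular'' statement. Since $X$ is geometrically integral and Gorenstein, $X_{\bar k}$ is integral and Gorenstein (both descend via the flat base change $X_{\bar k} \to X$); and because $X$ is normal, $X_{\bar k}$ is $(S_2)$, so its conductor $D$ (when nonempty) is pure of codimension $1$.

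Fix a generic point $\eta \in D$ and write $R := \mathcal{O}_{X_{\bar k}, \eta}$, $\widetilde R := \mathcal{O}_{Y,\eta}$ (the semilocal ring of the normalization at the fibre over $\eta$), and $\mathfrak{c} := \mathcal{I}_{\eta}$. Then $R$ is a one-dimensional reduced local Gorenstein ring with finite normalization $\widetilde R$ and conductor ideal $\mathfrak{c}$. By construction $R/\mathfrak{c} = \mathcal{O}_{D,\eta}$ and $\widetilde R/\mathfrak{c} = \mathcal{O}_{C,\eta}$, and the short exact sequence $0 \to R/\mathfrak{c} \to \widetilde R/\mathfrak{c} \to \widetilde R/R \to 0$ identifies $\widetilde R/R$ with $\mathcal{O}_{C,\eta}/\mathcal{O}_{D,\eta}$ as $\mathcal{O}_{D,\eta}$-modules.

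The key input is the following classical identity for a one-dimensional reduced local Gorenstein ring $R$ with finite normalization $\widetilde R$ and conductor $\mathfrak{c} = (R : \widetilde R)$:
\[
\mathrm{length}_R(\widetilde R / R) \;=\; \mathrm{length}_R(R/\mathfrak{c}).
\]
This can be seen from the self-duality of fractional ideals in a one-dimensional Gorenstein ring: since $\omega_R \cong R$, the functor $J \mapsto J^\vee := \mathrm{Hom}_R(J,R) = (R:J)$ is an exact, length-preserving involution on the poset of fractional ideals, satisfying $\mathrm{length}_R(J/R) = \mathrm{length}_R(R/J^\vee)$ for every fractional ideal $J \supseteq R$; applied to $J = \widetilde R$, for which $J^\vee = \mathfrak{c}$, this produces the identity. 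Translating through the identifications above and taking the maximum over generic points of $D$ yields the claimed formula for $\delta(X_{\bar k}/\bar k)$.

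For the final assertion, if every component of $D$ is reduced, then $\mathcal{O}_{D,\eta}$ is a field at every generic point $\eta$ of $D$, hence has length $1$ over itself, so the chain of inequalities forces $\ell_F(X/k) \leq 1$. The main delicate point is the length identity for one-dimensional Gorenstein rings, which will require an explicit reference (e.g.\ Matlis' \emph{1-dimensional Cohen--Macaulay rings} or Bass' \emph{On the ubiquity of Gorenstein rings}); once it is in hand the rest is routine bookkeeping with the definitions of $D$, $C$, and the conductor.
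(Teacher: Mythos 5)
Your proof is correct and follows the same structure as the paper's: reduce via \autoref{p-delta} to the equality $\delta(X_{\bar k}/\bar k)=\max_\eta\operatorname{length}(\mathcal{O}_{D,\eta})$, handle it at codimension-one points of the conductor using the Gorenstein hypothesis, and read off the final bound when $D$ is reduced. The one place you diverge is in the key length computation: the paper cites \cite[Proposition A.2]{FS20} for $\operatorname{length}_{\mathcal{O}_{D,\eta}}(\mathcal{O}_{C,\eta})=2\operatorname{length}_{\mathcal{O}_{D,\eta}}(\mathcal{O}_{D,\eta})$ (equivalent, by additivity of length along $R\subseteq\widetilde R$, to your $\operatorname{length}_R(\widetilde R/R)=\operatorname{length}_R(R/\mathfrak{c})$), whereas you sketch the proof directly from the self-duality $J\mapsto (R:J)$ of fractional ideals in a one-dimensional Gorenstein local ring. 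Both rest on the same Gorenstein-duality fact; yours supplies the argument and theirs defers to a reference, so the difference is cosmetic. One small imprecision worth tightening if you write this up: $J/R$ and $R/J^\vee$ are not literally $\operatorname{Hom}_R(-,R)$-dual (the $\operatorname{Hom}$ vanishes on torsion modules); the identification goes through $\operatorname{Ext}^1_R(J/R,R)\cong R/J^\vee$, obtained by applying $\operatorname{Hom}_R(-,R)$ to $0\to R\to J\to J/R\to 0$ and using that $J$ is maximal Cohen--Macaulay, together with the fact that $\operatorname{Ext}^1_R(-,\omega_R)$ is length-preserving on finite-length modules.
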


\begin{proof}
By \autoref{p-delta}, we only have to show the last equality.
Let $\eta$ be the generic point of an irreducible component of the conductor $D \subset X$. The Gorenstein condition implies $\length_{\mathcal{O}_{D,\eta}}{\mathcal{O}_{C, \eta}}=2\length_{\mathcal{O}_{D,\eta}}{\mathcal{O}_{D, \eta}}$ by  \cite[Proposition A.2]{FS20}, which shows that $\delta(X_{\overline{k}}/\overline{k})= \max_{\eta \in D} \length_{\mathcal{O}_{D,\eta}}{\mathcal{O}_{D, \eta}}$ by \cite[\href{https://stacks.math.columbia.edu/tag/00IV}{Tag 00IV}]{stacks-project}, as claimed.

The last statement is immediate as $\length_{\mathcal{O}_{D,\eta}}{\mathcal{O}_{D, \eta}}=1$ if $D$ is reduced.
% Let $\eta$ be the generic point of a component of the conductor $D \subset X$, thus we conclude by \autoref{p-delta}. \GM{I don't understand thi sentence.}\FB{me neither}
% By \cite[Proposition A.2]{FS20} the Gorenstein condition implies $\length_{\mathcal{O}_{D,\eta}}{\mathcal{O}_{C, \eta}}=2\length_{\mathcal{O}_{D,\eta}}{\mathcal{O}_{D, \eta}}$, which shows that $\delta(X_{\overline{k}}/\overline{k})=\length_{\mathcal{O}_{D,\eta}}{\mathcal{O}_{D, \eta}}$ by \cite[\href{https://stacks.math.columbia.edu/tag/00IV}{Tag 00IV}]{stacks-project}. 
% The last statement is immediate as $\length_{\mathcal{O}_{D,\eta}}{\mathcal{O}_{D, \eta}}=1$ if $D$ is reduced.
\end{proof}

We can improve the bounds of \cite{BT22} in the geometrically integral case.

\begin{corollary}\label{c-length-frob}
Let $X$ be a geometrically integral normal Gorenstein del Pezzo surface. 
Then $\ell_F(X/k) \leq 1.$
Moreover, if $L$ is a torsion line bundle, then $L^{\otimes p} \cong \mathcal{O}_X$. 
In particular, $\Pic^0_{X_{\bar{k}}/\bar{k}} \cong \mathbb{G}_{a,\bar{k}}^{h^1(X,\mathcal{O}_X)}$.
\end{corollary}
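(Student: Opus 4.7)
My plan is to bootstrap from \autoref{c-lleq1} to obtain $\ell_F(X/k)\le 1$, use the Frobenius factorization this provides to force the multiplication-by-$p$ map on $\Pic^0$ to vanish, and finally invoke the structure theorem for commutative unipotent group schemes. For the first assertion, if $X$ is geometrically normal then $\ell_F(X/k)=0$ and nothing is to show; otherwise $X_{\overline{k}}$ is a non-normal integral Gorenstein del Pezzo surface over $\overline{k}$ (integrality comes from geometric integrality of $X$, while the Gorenstein and anticanonical-ampleness conditions descend under the flat base change $\overline{k}/k$). Reid's theorem \autoref{t-recap-reid} ensures that the conductor of the normalization of $X_{\overline{k}}$ is integral, so every component is reduced, and the ``In particular'' clause of \autoref{c-lleq1} yields $\ell_F(X/k)\le 1$.

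For the torsion claim, assume $\ell_F(X/k)=1$ (the other case being trivial). By \cite[Theorem~5.9]{Tan21}, the absolute Frobenius of $X_{\overline{k}}$ admits a factorization $F=\mu\circ\pi$ through the normalization $\mu\colon Y\to X_{\overline{k}}$, and for a torsion $L\in\Pic(X)$ we then obtain
\[
L_{\overline{k}}^{\otimes p}\;=\;F^{*}L_{\overline{k}}\;=\;\pi^{*}\mu^{*}L_{\overline{k}}.
\]
Reid's classification shows that $Y$ is a rational (in fact toric) surface, so $\Pic(Y)$ is torsion-free; hence $\mu^{*}L_{\overline{k}}\cong\mathcal{O}_Y$ and $L_{\overline{k}}^{\otimes p}\cong\mathcal{O}_{X_{\overline{k}}}$. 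Since the N\'eron--Severi group of a surface of del Pezzo type is torsion-free (these surfaces being rationally connected), $L$ lies in $\Pic^0(X)$, and by \autoref{prop: picard-dP} the multiplication-by-$p$ map $[p]\colon\Pic^0_{X/k}\to\Pic^0_{X/k}$ is a morphism between smooth $k$-group schemes. We have just shown that $[p]$ vanishes on every $\overline{k}$-point, so density of $\overline{k}$-points forces $[p]\equiv 0$, and in particular $L^{\otimes p}\cong\mathcal{O}_X$.

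The last assertion now follows from the structure theorem for commutative algebraic groups: \autoref{prop: picard-dP} gives $\Pic^0_{X/k}$ as a smooth connected commutative unipotent group scheme of dimension $h^1(X,\mathcal{O}_X)$, and by the previous step it is killed by $[p]$. Over $\overline{k}$, a smooth connected commutative unipotent group scheme of exponent $p$ is a vector group, whence $\Pic^0_{X_{\overline{k}}/\overline{k}}\cong\mathbb{G}_{a,\overline{k}}^{h^1(X,\mathcal{O}_X)}$. The main technical hurdles I foresee are a case-by-case verification of the torsion-freeness of $\Pic(Y)$ for the surfaces $Y$ appearing in Reid's classification, and a careful citation for the structure-theorem step identifying a commutative unipotent group of exponent $p$ over $\overline{k}$ with a vector group.
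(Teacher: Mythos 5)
Your proof follows the paper's strategy step by step: reduced conductor via Reid's theorem feeding into \autoref{c-lleq1}, Frobenius factorization for the $p$-torsion claim, and \autoref{prop: picard-dP} plus the Serre structure theorem for the final identification. The main difference is in the middle step, where the paper simply cites \cite[Theorem 4.1]{BT22} while you reconstruct the argument from scratch via the factorization $F = \mu \circ \pi$ of the absolute Frobenius through the normalization $Y \to X_{\bar{k}}$ furnished by \cite[Theorem 5.9]{Tan21}. This is the right idea and is almost certainly what is going on inside \cite{BT22}, so it is a useful expansion rather than a different route. Two points deserve a sentence more than you give them. First, to go from $L_{\bar{k}}^{\otimes p} \cong \mathcal{O}_{X_{\bar{k}}}$ back to $L^{\otimes p} \cong \mathcal{O}_X$, the cleanest path is injectivity of $\Pic(X) \to \Pic(X_{\bar{k}})$ (which holds since $X$ is proper with $H^0(X,\mathcal{O}_X)=k$ and is geometrically integral); your detour through N\'eron--Severi torsion-freeness and the density of $\bar{k}$-points works but is circuitous, and to set up the density argument you would want to run the factorization for an arbitrary torsion element of $\Pic^0(X_{\bar{k}})$, not just for pullbacks from $X$, which fortunately costs nothing. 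Second, your assertion that the case $\ell_F(X/k)=0$ is ``trivial'' requires a one-line appeal to \autoref{l-HW}: a geometrically normal Gorenstein del Pezzo over $\bar{k}$ is either canonical (rational, torsion-free $\Pic$) or an elliptic cone (with $\Pic \cong \mathbb{Z}$), so torsion line bundles are trivial either way. Finally, note a small infelicity you share with the paper: \autoref{prop: picard-dP} is stated for surfaces of del Pezzo type (klt), which need not include all normal Gorenstein del Pezzo surfaces; the elliptic cone escapes it but has $\Pic^0 = 0$, so the conclusion still holds.
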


\begin{proof}
By \autoref{t-recap-reid}, the conductor $D$ is reduced and thus
we can apply \autoref{c-lleq1} to conclude.
The proof of the assertion on torsion line bundles follows as in \cite[Theorem 4.1]{BT22}.
For the last statement, by \autoref{prop: picard-dP} the Picard scheme $\Pic^0_{X_{\bar{k}}/\bar{k}}$ is a smooth commutative unipotent algebraic group of dimension $h^1(X,\mathcal{O}_X)$.
As it is annihilated by $p$, we conclude by \cite[Proposition VII.11]{Ser88}.
\end{proof}

The previous analysis allows to obtain better estimates for the global generation than \autoref{t-very-ample} in the geometrically integral canonical case.

\begin{corollary}\label{c-gg-dp-int}
Let $X$ be a geometrically integral canonical del Pezzo surface. Let $A$ be a big and nef Cartier divisor on $X$ and suppose $X$ is not geometrically normal. Then $p \in \{2,3\}$ and the following hold:
\begin{enumerate}
    \item If $p=3$, then $\mathcal{O}_X(3A)$ is globally generated and $\omega_X^{- 9}$ is very ample;
    \item If $p=2$, then $\mathcal{O}_X(2A)$ is globally generated and $\omega_X^{- 7}$ is very ample.
\end{enumerate}
\end{corollary}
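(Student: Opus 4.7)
The plan is to refine the arguments of \autoref{l-gg-dp} and \autoref{t-very-ample} by plugging in the sharper bound $\ell_F(X/k)\leq 1$ from \autoref{c-length-frob}, which is the extra input that the geometrically integral hypothesis provides. Since $X$ is not geometrically normal, \autoref{t-recap}(2) immediately forces $p\in\{2,3\}$.

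For global generation I would repeat the Frobenius factorisation argument of \autoref{l-gg-dp}: the iterated geometric Frobenius $F^{\ell_F(X/k)}_{X_{\overline k}}$ factors through the normalised base change $\mu\colon (X_{\overline k})^{\norm}_{\red}\to X_{\overline k}$, which by \autoref{t-recap} is toric, so $\mu^*A$ is globally generated, and therefore so is its Frobenius pullback $A^{\otimes p^{\ell_F(X/k)}}$ on $X_{\overline k}$. By \autoref{c-length-frob} the exponent is at most $p$, and faithfully flat descent then yields the global generation of $\mathcal{O}_X(pA)$ on $X$ itself.

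For very ampleness I would apply \autoref{p-very-ampleness-0} with $\mathcal{A}=\omega_X^{-p}$, which is ample and globally generated by the previous step applied to $A=-K_X$, and with $\mathcal{L}=\omega_X^{-6}$ when $p=3$ or $\mathcal{L}=\omega_X^{-5}$ when $p=2$, so that $\mathcal{A}\otimes\mathcal{L}$ is the line bundle in the statement. The $0$-regularity condition reduces in both cases to the vanishing of $H^1(X,\omega_X^{-3})$ together with $H^2(X,\mathcal{L}\otimes\mathcal{A}^{-2})$: the latter is immediate from Serre duality and the ampleness of $\omega_X^{-1}$, while for the former I would dualise to $H^1(X,\omega_X^{4})^\vee$ and invoke \autoref{p-bound-b&b} with $d=4$, which meets the thresholds $d\geq 2$ for $p=3$ and $d\geq 4$ for $p=2$. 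The argument is essentially routine once \autoref{c-length-frob} is in hand; the main (minor) subtlety is that $d=4$ lies exactly at the characteristic-$2$ threshold of \autoref{p-bound-b&b}, which is also what prevents an a priori stronger conclusion via this route.
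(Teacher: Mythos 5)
Your proposal is correct and follows essentially the same route as the paper: establish $\ell_F(X/k)\leq 1$ via \autoref{c-length-frob}, then read off the conclusion from the $\ell_F=1$ cases. The only difference is that the paper simply cites \autoref{l-gg-dp} and \autoref{t-very-ample} (whose $\ell_F=1$ branches are already stated for exactly these line bundles), whereas you unwind those citations and rerun the Frobenius-factorisation and $0$-regularity arguments directly; the choices $\mathcal{A}=\omega_X^{-p}$, $\mathcal{L}=\omega_X^{-6}$ (resp.\ $\omega_X^{-5}$) and the reduction to $H^1(X,\omega_X^{4})=0$ via \autoref{p-bound-b&b} are precisely the computations in the proof of \autoref{t-very-ample}.
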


\begin{proof}
By \autoref{c-length-frob}, $\ell_F(X/k) = 1$ and we conclude by combining \autoref{l-gg-dp} and  \autoref{t-very-ample}.
\end{proof}

\subsection{Anti-pluricanonical maps of non-normal del Pezzo surfaces}

In this section we assume $k$ is algebraically closed. 
Let $X$ be a non-normal integral Gorenstein del Pezzo surface with normalization $\nu \colon Y \to X$ with ramification $C \subseteq Y$ and conductor $D \subseteq X$.
As Gorenstein del Pezzo surfaces have the property $(S_2)$, by \cite[Theorem, Section 2.6]{Rei94}, there is an exact sequence
$$
0 \to \omega_X \to \nu_* \nu^* \omega_X \overset{\Tr \circ \Res}{\to} \omega_D \to 0,
$$
where $\Res$ is the pushforward of the classical residue map $\omega_Y(C) \to \omega_C$ (where we identify $\omega_Y(C) \cong \nu^* \omega_X$ and $\omega_Y(C)|_{C} \cong \omega_C$ by adjunction). 
The homomorphism $\Tr$ is the trace map which, over the generic point $\eta$ of $D$, is given by the $(\mathcal{O}_{D,\eta})$-dual of the inclusion $\mathcal{O}_{D,\eta} \subseteq \nu_* \mathcal{O}_{C,\eta}$ by \cite[Remark 2.9]{Rei94}.
Tensoring with $\omega_X^{-(n+1)}$ and applying the projection formula we obtain
$$
0 \to \omega_X^{-n} \to \nu_* \nu^* \omega_X^{- n} \to \omega_D \otimes \omega_X^{- (n+1)} \to 0.
$$
As $\nu_* \nu^* \omega_X^{-\otimes n}$ is canonically isomorphic to $\nu_*(\omega_Y^{-\otimes n}(-nC))$, taking global sections, we deduce the following:
\begin{lemma} \label{p-global-sections-KX}
We have the following equality of subspaces of $H^0(Y,\omega_Y^{- n}(-nC))$:
$$
\nu^* H^0(X,\omega_X^{- n}) = \Ker(H^0((\Tr \circ \Res) \otimes \omega_X^{\otimes -{ (n+1)}}))
$$
\end{lemma}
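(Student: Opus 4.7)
The plan is to extract the statement directly from the long exact sequence in cohomology attached to the short exact sequence
$$
0 \to \omega_X^{-n} \to \nu_* \nu^* \omega_X^{- n} \to \omega_D \otimes \omega_X^{- (n+1)} \to 0
$$
displayed just before the lemma. Since this sheaf sequence is already established, the lemma is essentially a bookkeeping statement about what the first two terms and the first connecting arrow look like after applying $H^0$.

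First I would identify the middle term. Because $\nu$ is finite, $H^0(X,\nu_*\nu^*\omega_X^{-n}) = H^0(Y,\nu^*\omega_X^{-n})$, and the adjunction isomorphism $\nu^*\omega_X \cong \omega_Y(C)$ (used implicitly in the construction of the residue map) gives $\nu^*\omega_X^{-n} \cong \omega_Y^{-n}(-nC)$. Thus taking global sections of the displayed short exact sequence yields the left-exact sequence
$$
0 \to H^0(X,\omega_X^{-n}) \xrightarrow{\ \beta\ } H^0(Y,\omega_Y^{-n}(-nC)) \xrightarrow{\ \alpha\ } H^0(X,\omega_D \otimes \omega_X^{-(n+1)}),
$$
where $\alpha = H^0((\Tr\circ\Res)\otimes\omega_X^{-(n+1)})$ by definition of the second map in the sheaf sequence.

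Next I would verify that $\beta$ is the pullback $\nu^*$. By construction, the first arrow in the sheaf sequence is the adjunction unit $\omega_X^{-n} \to \nu_*\nu^*\omega_X^{-n}$ (obtained by tensoring the adjunction unit $\omega_X \to \nu_*\nu^*\omega_X$ from the Reid sequence with $\omega_X^{-(n+1)}$ and applying the projection formula). After the identification above, the induced map on $H^0$ is exactly $\nu^*\colon H^0(X,\omega_X^{-n}) \to H^0(Y,\omega_Y^{-n}(-nC))$.

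Finally, by exactness of the cohomology sequence, the image of $\beta = \nu^*$ coincides with $\Ker(\alpha)$, which is the desired equality of subspaces of $H^0(Y,\omega_Y^{-n}(-nC))$. There is no substantive obstacle here beyond tracking the two canonical identifications (projection formula plus $\nu^*\omega_X \cong \omega_Y(C)$); everything else is formal.
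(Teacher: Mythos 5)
Your proposal is correct and is exactly the argument the paper intends: the lemma follows from taking global sections of the displayed short exact sequence, identifying the middle term via the finiteness of $\nu$ and the adjunction $\nu^*\omega_X \cong \omega_Y(C)$, and using left exactness of $H^0$. The paper presents this as an immediate deduction and you have merely made the bookkeeping explicit.
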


We now prove a useful lower bound on the dimension of the space of anti-pluricanonical sections on del Pezzo surfaces.
It will be the main tool to bound the irregularity of del Pezzo surfaces.

\begin{corollary} \label{c-linsystem}
There is an inclusion of $k$-vector spaces:
$$
V := \{s \in H^0(Y,\mathcal{O}_Y(-n(K_Y + C))) \mid s|_{C} = 0 \} \subseteq \nu^* H^0(X,\omega_X^{-n}).
$$
Thus if $\omega_X^{- n}$ is globally generated, then
$$
h^0(X,\omega_X^{- n}) \geq  \dim V + 2.
$$
\end{corollary}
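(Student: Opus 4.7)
The proof splits into two parts: establishing the inclusion $V \subseteq \nu^* H^0(X,\omega_X^{-n})$, and then bounding the codimension of $V$ inside $\nu^* H^0(X,\omega_X^{-n})$ from below by $2$.

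For the inclusion, I would invoke \autoref{p-global-sections-KX}, which identifies $\nu^* H^0(X,\omega_X^{-n})$ with the kernel of $H^0((\Tr \circ \Res) \otimes \omega_X^{-(n+1)})$. After the adjunction isomorphism $\omega_Y(C)|_C \cong \omega_C$, the residue map $\Res$ is just the restriction-to-$C$ morphism on the corresponding twist of $\omega_Y(C)$. Consequently, any section $s \in H^0(Y, \omega_Y^{-n}(-nC))$ with $s|_C = 0$ lies in $\ker \Res$, hence also in $\ker(\Tr \circ \Res)$. This yields $V \subseteq \nu^* H^0(X,\omega_X^{-n})$.

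For the dimension estimate, note first that $\nu^*\colon H^0(X,\omega_X^{-n}) \to H^0(Y,\omega_Y^{-n}(-nC))$ is injective (as $X$ is integral and $\omega_X^{-n}$ is a line bundle), so $h^0(X,\omega_X^{-n}) = \dim V + \dim Q$, where $Q := \nu^*H^0(X,\omega_X^{-n})/V$. I would then identify $Q$ with the image of the restriction map $H^0(X,\omega_X^{-n}) \to H^0(D,\omega_X^{-n}|_D)$: for $t \in H^0(X,\omega_X^{-n})$, the section $\Res(\nu^* t)$ equals the pullback of $t|_D$ along $\nu|_C \colon C \to D$, and this pullback is injective on sections because $\nu|_C$ is a finite surjection onto the integral scheme $D$ (so $\mathcal{O}_D \hookrightarrow (\nu|_C)_* \mathcal{O}_C$).

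The main step is then to show that this image has dimension at least $2$. The hypothesis that $\omega_X^{-n}$ is globally generated ensures the image generates $\omega_X^{-n}|_D$ as an $\mathcal{O}_D$-module, and since $\omega_X^{-n}$ is ample while $D$ is a projective integral curve (integrality coming from \autoref{t-recap-reid}), the line bundle $\omega_X^{-n}|_D$ has positive degree and is therefore non-trivial. A subspace of sections generating a non-trivial line bundle on an integral projective curve necessarily has dimension at least $2$, for otherwise it would be spanned by a single nowhere-vanishing section, forcing triviality. The subtlest ingredient is the factorization of $\Res$ through restriction to $D$: this is what transfers the problem from the potentially pathological scheme $C$ to the conductor $D$, on which the ampleness of $\omega_X^{-n}$ gives the positivity needed to conclude.
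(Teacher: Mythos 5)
Your proposal is correct and follows essentially the same route as the paper: the inclusion $V \subseteq \nu^* H^0(X,\omega_X^{-n})$ is obtained by identifying $V$ with the kernel of $H^0(\Res\otimes\omega_X^{-(n+1)})$, which sits inside the kernel of $H^0((\Tr\circ\Res)\otimes\omega_X^{-(n+1)})$, and then invoking \autoref{p-global-sections-KX}; the lower bound comes from global generation forcing base-point-freeness along the ramification/conductor, combined with ampleness of $\omega_X^{-n}$ to rule out the quotient being $0$- or $1$-dimensional. The only cosmetic difference is that you route the quotient $\nu^*H^0(X,\omega_X^{-n})/V$ explicitly through the conductor $D$ (using injectivity of $\mathcal{O}_D\hookrightarrow(\nu|_C)_*\mathcal{O}_C$ and integrality of $D$), whereas the paper argues directly on $C$; both versions are valid since a line bundle on a projective curve generated by a single section is trivial, contradicting positivity of degree.
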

\begin{proof}
By the natural identifiation $\omega_Y(C)|_C \cong \omega_C$ given by adjunction, the space $V$ is equal to the kernel of the homomorphism $H^0(\Res \otimes \omega_X^{-n-1})$, hence it is contained in the kernel of $H^0((\Tr \circ \Res) \otimes \omega_X^{-n-1}) = \nu^* H^0(X,\omega_X^{-n})$ by \autoref{p-global-sections-KX}. 

If $\omega_{X}^{-n}$ is globally generated, then the linear system $|\nu^* H^0(X,\omega_X^{-n})|$ has no base points on $C$. 
Since all sections in $V$ vanish on $C$ and $\omega_X^{-1}$ is ample there are at least two more linearly independent sections of $\nu^* H^0(X,\omega_X^{-n})$ that are non-zero when restricted to $C$, thus concluding the inequality.
\end{proof}

\subsection{Irregularity of geometrically integral l.c.i. del Pezzo surfaces}

We prove effective bounds on the values of the irregularity of locally complete intersection (lci) del Pezzo surfaces.

\begin{proposition} \label{p-bound-irregularity-normal}
Let $X$ be a geometrically integral normal locally complete intersection del Pezzo surface over a field $k$ of characteristic $p>0$. 
Let $\nu \colon Y \to X_{\bar{k}}$ be the normalization of $X_{\bar{k}}$ and let $C \subseteq Y$ be the ramification of $\nu$. Then, one of the following holds:
\begin{enumerate}
    \item $h^1(X,\omega_X^{ n}) = 0$ for all $n \in \mathbb{Z}$.
    \item $p = 3$, $(Y,C) = (\mathbb{P}^2,2L)$, $h^1(X,\mathcal{O}_X) = 2$, and $h^1(X,\omega_X^{ n}) = 0$ for all $n \geq 2$.
    % \item $p = 2$, $(Y,C) = (\mathbb{P}^2,2L)$, $h^1(X,\mathcal{O}_X) = 3$, and $h^1(X,\omega_X^{\otimes n}) = 0$ for all $n \geq 4$.
    \item $p = 2$, $(Y,C) = (\mathbb{P}^2,2L)$, $h^1(X,\mathcal{O}_X) = 1$, and $h^1(X,\omega_X^{ n}) = 0$ for all $n \geq 2$.
    \item $p = 2$, $(Y,C) = (\mathbb{P}(1,1,2),2L)$, $h^1(X,\mathcal{O}_X) = 1$, and $h^1(X,\omega_X^{ n}) = 0$ for all $n \geq 2$.
\end{enumerate}
\end{proposition}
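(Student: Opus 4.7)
The plan is to split the argument according to whether $X$ is geometrically normal, and in the geometrically non-normal case to work through the short list of possible pairs $(Y,C)$ from \autoref{t-recap}. Throughout, the geometric integrality hypothesis enters via \autoref{c-length-frob}, which gives $\ell_F(X/k)\leq 1$ and $\Pic^0_{X_{\bar k}/\bar k} \cong \mathbb{G}_{a,\bar k}^{h^1(X, \mathcal{O}_X)}$, so torsion on $\Pic$ is $p$-torsion and the irregularity controls the whole Picard scheme.

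If $X$ is geometrically normal, I would invoke \autoref{l-HW} on $X_{\bar k}$. Either $X_{\bar k}$ is a canonical del Pezzo surface, in which case \autoref{prop: geom-normal-canonical-dP} gives geometric canonicity and all twist-vanishings in conclusion (1) follow from Kawamata--Viehweg vanishing applied over $\bar k$; or $X_{\bar k}$ is the contraction of the negative section of an elliptic ruled surface. In the latter scenario the lci hypothesis severely restricts the admissible degree of the contracted section (simple elliptic singularities are lci only in low embedding dimension), and the surviving instances can be checked directly via the Leray spectral sequence of the minimal resolution $Z\to X_{\bar k}$ to satisfy $h^1(X,\omega_X^n)=0$ for all $n$, hence again land in case (1).

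Now assume $X$ is not geometrically normal. Then $p\in\{2,3\}$, $\ell_F(X/k)=1$, and $(Y,C)$ is constrained by \autoref{t-recap}. The main tool is the twisted Reid exact sequence
\[
0 \to \omega_X^{-n} \to \nu_*\bigl(\omega_Y^{-n}(-nC)\bigr) \to \omega_D \otimes \omega_X^{-(n+1)} \to 0
\]
coming from \autoref{p-global-sections-KX}. From this, \autoref{c-linsystem} yields a lower bound $h^0(X,\omega_X^{-n}) \geq \dim V + 2$ -- where $V$ is the subspace of $H^0(Y,\mathcal{O}_Y(-n(K_Y+C)))$ of sections vanishing on $C$ -- as soon as $\omega_X^{-n}$ is globally generated, which holds by \autoref{c-gg-dp-int}. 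Comparing with Riemann--Roch on $X$ (killing $h^2$ by Serre duality, which is available since $X$ is Gorenstein) and substituting the Tate identity $K_X^2 = p^{\epsilon(X/k)}(K_Y+(p-1)C)^2$ from \cite[Lemma 4.5]{Tan19}, each candidate $(Y,C)$ is converted into a concrete numerical inequality tying $h^1(X,\mathcal{O}_X)$, $h^1(X,\omega_X^{-1})$, and the combinatorial data of $(Y,C)$. Cross-checking against Maddock's bound \cite[Corollary 1.2.6]{Mad16} on $h^1(X,\mathcal{O}_X)$, and using the lci hypothesis to further constrain the conductor scheme $D$, eliminates every entry of \autoref{t-recap} save $(Y,C)=(\mathbb{P}^2,2L)$ for $p\in\{2,3\}$ and $(Y,C)=(\mathbb{P}(1,1,2),2L)$ for $p=2$, producing exactly the values of $h^1(X,\mathcal{O}_X)$ asserted in cases (2), (3), (4).

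For the higher-twist vanishing $h^1(X,\omega_X^n)=0$ with $n\geq 2$, Serre duality reduces the claim to $h^1(X,\omega_X^{-m})=0$ for $m\geq 1$. The ranges $m\geq 2$ in characteristic $3$ and $m\geq 4$ in characteristic $2$ follow immediately from \autoref{p-bound-b&b}; the remaining small values of $m$ are handled by the long exact sequence attached to the twisted Reid sequence, whose flanking terms can now be computed explicitly on $Y=\mathbb{P}^2$ or $\mathbb{P}(1,1,2)$ using standard cohomology of line bundles on (weighted) projective spaces. The main obstacle is the exhaustive bookkeeping in the third step: for $p=2$, \autoref{t-recap}(4) contains seven families of $(Y,C)$ that must each be individually ruled out, and doing so cleanly requires the lci condition to interact nontrivially with the \autoref{c-linsystem}--Riemann--Roch inequalities and with Maddock's bound in order to isolate precisely $(\mathbb{P}^2,2L)$ and $(\mathbb{P}(1,1,2),2L)$ as the only surviving cases.
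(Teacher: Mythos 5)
Your overall architecture (geometrically normal case handled separately; Reid exact sequence plus \autoref{c-linsystem} plus Riemann--Roch plus Maddock's bound in the non-normal case) matches the paper's, but the plan as written skips three ideas that actually make the argument close, and one of them cannot be recovered by the tools you list.

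First, you propose to run the seven families in \autoref{t-recap}(4) ``each individually.'' The paper avoids this entirely: it first observes that if any component of the ramification divisor $C$ is reduced then $X_{\bar k}$ is tame by \cite[Corollary 4.10]{Rei94}, hence $h^1(X,\omega_X^n)=0$ for all $n$ and one lands in conclusion (1). This kills every entry of \autoref{t-recap} except those of the form $(\mathbb{P}(1,1,d),2L)$ in one stroke, and only \emph{then} does the coordinate computation of $\dim V_{n,d}$ on $\mathbb{P}(1,1,d)$ make sense. Without this reduction you would have to redo the $V$-dimension count case by case on $\mathbb{P}^1\times\mathbb{P}^1$, the Hirzebruch surfaces, etc., which is considerably more than ``exhaustive bookkeeping.''

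Second, the way Maddock's bound is wired into Riemann--Roch is not just a cross-check: the paper introduces the largest integer $N$ with $h^1(X,\omega_X^{-N})\neq 0$ (so $h^1(X,\omega_X^{-pN})=0$), applies Maddock's inequality at level $n=N+1$, and compares with the Riemann--Roch lower bound for $h^1(\mathcal{O}_X)$ at level $n=N+1$ where $h^1(X,\omega_X^{-(N+1)})=0$. Your write-up does not set up this $N$-device, and without it the two inequalities face in the same direction and do not squeeze $N$ and $d$.

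Third, and most importantly, the numerical game alone does \emph{not} eliminate all spurious cases in characteristic $2$: it leaves open $N=2$, $d=1$, $h^1(X,\mathcal{O}_X)=3$. The paper disposes of this via an $\alpha_{\omega_X^3}$-torsor $Z\to X$: Maddock's formulas force $[H^0(Z,\mathcal{O}_Z):k]=2$, $h^1(Z,\mathcal{O}_Z)=0$, $K_Z^2=16$, and then a base-change diagram identifies the normalisation of $Z_{\bar k}$ with that of $X_{\bar k}$, namely $\mathbb{P}^2$, so $K_Z^2\leq 9$, a contradiction. This geometric step is not derivable from \autoref{c-linsystem}, Riemann--Roch, or \autoref{p-bound-b&b}, and it is also exactly where the lci hypothesis earns its keep (it guarantees $Z$ is again an lci del Pezzo surface by \cite[Theorem 1.2.3]{Mad16}, which is needed to apply Maddock's numerology to $Z$). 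Your sentence that ``the lci condition interacts nontrivially with the \autoref{c-linsystem}--Riemann--Roch inequalities'' gestures at something, but the actual use is the torsor construction; without it your outline cannot conclude $h^1(X,\mathcal{O}_X)\leq 2$ for $p=2$.

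A more minor point: in the geometrically normal case you suggest using the lci hypothesis to exclude the elliptic-cone case of \autoref{l-HW}. The paper instead invokes \autoref{prop: geom-normal-canonical-dP} and then an explicit Kawamata--Viehweg-type vanishing on $X_{\bar k}$; the elliptic cone is excluded because $X$ is assumed canonical (equivalently rational by \autoref{cor: surface_canonical_rational}), not because an elliptic cone singularity fails to be lci (a cone over a plane cubic is a hypersurface singularity).
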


\begin{proof}
If $X$ is geometrically normal, then $X$ is geometrically canonical by \autoref{prop: geom-normal-canonical-dP}. By Serre duality, it is sufficient to show that $h^1(X_{\overline{k}}, \omega_{X_{\overline{k}}}^{\otimes n})=0$ for $n > 0$. This follows from \cite[Theorem 5.6.a]{Berkvvfails}.
%(1) holds by \cite[Proposition 4.2]{HW81}.
If $X$ is not geometrically normal and the ramification divisor contains a reduced component, then $X_{\overline{k}}$ is tame and $h^1(X,\omega_X^{\otimes n}) = 0$ for all $n \in \mathbb{Z}$ by \cite[Corollary 4.10]{Rei94}. 
Therefore, by \cite[Theorem 4.1]{PW}, we may assume that $p \in \{2,3\}$, $h^1(X,\mathcal{O}_X) > 0$ and $(Y,C) = (\mathbb{P}(1,1,d),2L)$ for some $d \geq 1$ where $L$ is a line through the vertex of the cone.

Choose weighted coordinates $x,y,z$ of degree $1,1,d$ on $Y$ such that $L=\left\{x = 0 \right\}$, hence $2L=\left\{x^2=0 \right\}$ in weighted coordinates. 
Let $n \geq 1$ and $V_{n,d} \subseteq H^0(\mathbb{P}(1,1,d),\mathcal{O}(nd))$ be the subspace of sections vanishing along $2L$. 
Then, $V_{n,d}$ consists of weighted homogeneous polynomials of the form $x^2f_{nd-2}(x,y,z)$, hence 
$\dim V_{n,d} = \sum_{j=1}^{n} (jd - 1) = \frac{n^2 + n}{2}d - n.$
As $\nu^* \omega_{X_{\bar{k}}} \cong \mathcal{O}(-dL)$, we have $\nu^* \omega_{X_{\bar{k}}}^{-n} \cong \mathcal{O}(ndL)$.  
By \autoref{c-linsystem} we have 
\begin{equation} \label{e-VDNbound1}
h^0(X,\omega_X^{- n}) \geq 
\begin{cases}
\frac{n^2 + n}{2}d - n. \\
\frac{n^2 + n}{2}d - n+2 \text{ if, additionally, $\omega_X^{-n}$ is globally generated }
\end{cases}
\end{equation}
By the Riemann--Roch formula \cite[Theorem 2.10]{Tan18b} we have
$$ h^0(X,\omega_X^{- n})-h^1(X, \omega_X^{-n})=1-h^1(X,\mathcal{O}_X)+\frac{n^2 + n}{2}K_X^2=1-h^1(X,\mathcal{O}_X)+ \frac{n^2 + n}{2}d.$$
Thus if we assume $h^1(X, \omega_X^{- n})=0$ we deduce from Equation \eqref{e-VDNbound1} that
\begin{equation} \label{e-VDNbound2}
h^1(X,\mathcal{O}_X) = 1 - h^0(X,\omega_X^{- n}) + \frac{n^2 + n}{2}d \leq
\begin{cases}
n + 1 \\
n - 1 \text{ if, additionally, $\omega_X^{- n}$ is g.g. }
\end{cases}
\end{equation}

We also recall Maddock's bound \cite[Corollary 1.2.6]{Mad16}:
if $h^1(X,\omega_X^{ n}) \neq 0$ but $h^1(X,\omega_X^{pn}) = 0$, then
\begin{equation} \label{e-madbound}
h^1(X,\mathcal{O}_X) \geq \frac{nd(p-1)(3 + n(2p-1))}{12}.
\end{equation}

Now assume $p = 3$. By Serre vanishing and $h^1(\mathcal{O}_X) \neq 0$, there exists a largest $N \geq 0$ such that $h^1(X,\omega_X^{- N})  = h^1(X,\omega_X^{ (N + 1)}) \neq 0$.  
By \eqref{e-VDNbound2} and \eqref{e-madbound}, we have the following chain
$$
N + 2 \geq h^1(X,\mathcal{O}_X) \geq \frac{(N+1)d(p-1)(3 + (N+1)(2p-1))}{12} = \frac{(N+1)d(8 + 5N)}{6},
$$
hence $N = 0$, $d = 1$, showing $h^1(X, \mathcal{O}_X) \leq 2$. Finally $h^1(X,\mathcal{O}_X) = 2$ by \autoref{t-recap-reid}.

Now assume $p = 2$. Then, the argument of the previous paragraph yields
\begin{equation}\label{e-bigN}
N + 2 \geq h^1(X,\mathcal{O}_X) \geq \frac{(N+1)d(p-1)(3 + (N+1)(2p-1))}{12} = \frac{(N+1)d(N+2)}{4},
\end{equation}
hence $N \leq 3$.
% $(N,d) \in \{(3,1),(2,1),(1,2),(1,1),(0,4),(0,3),(0,2),(0,1)\}$. 
Therefore, $h^1(X,\omega_X^{- 4}) = 0$ and, by \autoref{c-gg-dp-int}, $\omega_X^{-  4}$ is globally generated, so $h^1(X,\mathcal{O}_X) \leq 3$ by \eqref{e-VDNbound2}.
If $h^1(X,\mathcal{O}_X) = 1$, then $N = 0$ and $d \in \{1,2\}$ by \eqref{e-bigN} and we get Cases (3) and (4).

So, it remains to exclude the possibility $h^1(X,\mathcal{O}_X) \geq 2$ in characteristic $p = 2$.
By \autoref{c-gg-dp-int}, $\omega_X^{- 2}$ is globally generated, so by \eqref{e-bigN} the inequality $h^1(X,\mathcal{O}_X) \geq 2$
implies $N = 2$, $d = 1$, and $h^1(X,\mathcal{O}_X) = 3$.

Seeking a contradiction, assume that there exists an $X$ with these invariants. Since $N = 2$, we have $H^1(X, \omega_X^{3}) \neq 0$ and $H^1(X, \omega_X^6)=0$. Let $Z \to X$ be a non-trivial $\alpha_{\omega_X^3}$-torsor. and let $k_Z:=H^0(Z, \mathcal{O}_Z)$.
    Note that $Z$ is an l.c.i. del Pezzo surface by \cite[Theorem 1.2.3]{Mad16}
    and by \cite[Equation (1.2.4)]{Mad16}, we have 
    $[k_Z :k] (1-h^1(Z, \mathcal{O}_Z)) = 2$ so that we have $[k_Z :k]=2$ and $H^1(Z,\mathcal{O}_Z) = 0$.
    By \cite[Equation (1.2.5)]{Mad16} we then conclude that $K_Z^2=16$. 
    Now, we consider the following diagram:
$$
\xymatrix{
(Z_{\bar{k}})^{\norm} \ar[r] \ar^{(\pi_{\bar{k}})^{\norm}}[d] & Z_{\bar{k}} \ar[r] \ar^{\pi_{\bar{k}}}[d] & Z \ar^{\pi}[d] \ar^{f}[dr] & \\
(X_{\bar{k}})^{\norm} \ar[r]  & X_{\bar{k}} \ar[r] \ar[d] & X_{k_Z} \ar[r] \ar[d] & X \ar[d] \\
& \Spec \bar{k} \ar[r] & \Spec k_Z \ar[r] & \Spec k,
}
$$
where $Z_{\bar{k}} = Z \times_{\Spec k_Z} \Spec \bar{k}$.
Since $f$ and $X_{k_Z} \to X$ are finite of degree $2$, the morphism $\pi$ is finite and birational. In particular, $Z$, considered as a $k_Z$-scheme, is geometrically integral and the induced map $(\pi_{\bar{k}})^{\norm}$ of the normalisations is an isomorphism.
As $(X_{\bar{k}})^{\norm} \cong \mathbb{P}^2$, $16=K_Z^2 \leq K_{(Z_{\bar{k}})^{\norm}}^2=9$, reaching a contradiction.
% If $h^1(X,\mathcal{O}_X) \geq 2$, then, as $\omega_X^{- 2}$ is globally generated by \autoref{c-gg-dp-int}, we have $h^1(X,\omega_X^{- 2}) \neq 0$, hence $N \geq 2$. By \eqref{e-bigN}, this implies $N = 2$, $d = 1$, and $h^1(X,\mathcal{O}_X) = 3$.
%  If $h^1(X,\mathcal{O}_X) = 1$, then $N = 0$ and $d \in \{1,2\}$ by \eqref{e-bigN}.
\end{proof}

\begin{corollary}
Let $X$ be a geometrically integral normal locally complete intersection del Pezzo surface over a field $k$ of characteristic $p$. Let $\nu \colon Y \to X_{\bar{k}}$ be the normalization of $X_{\bar{k}}$ and let $C \subseteq Y$ be the ramification of $\nu$. 
Then, one of the following holds:
\begin{enumerate}
    \item $h^1(X,\mathcal{O}_X) = 0$,  $K_X^2 \geq 3$, and $\omega_X^{-1}$ is very ample.
    \item $h^1(X,\mathcal{O}_X) = 0$, $K_X^2 = 2$, and $\omega_X^{-2}$ is very ample.
    \item $h^1(X,\mathcal{O}_X) = 0$, $K_X^2 = 1$, and $\omega_X^{-3}$ is very ample.
    \item $h^1(X,\mathcal{O}_X) = 1$, $p=2$, $(Y,C, K_X^2) \in \{(\mathbb{P}^2,2L, 1),(\mathbb{P}(1,1,2),2L,2)\}$,  and $\omega_X^{-6}$ is very ample.
    \item $h^1(X,\mathcal{O}_X) = 2$, $p=3$, $(Y,C,K_X^2) = (\mathbb{P}^2,2L,1)$, and $\omega_X^{-7}$ is very ample.
\end{enumerate}

\end{corollary}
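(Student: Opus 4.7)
The proof separates cases by $h^1(X, \mathcal{O}_X)$ via \autoref{p-bound-irregularity-normal}, which restricts the irregularity to $\{0, 1, 2\}$ and identifies $(Y, C)$ in the non-tame cases. To obtain the values of $K_X^2$ in cases (4) and (5), I would use that geometric integrality forces the thickening exponent $\epsilon(X/k) = 0$, because the local ring of $X \times_k k^{1/p^\infty}$ at its generic point is already a field. Tate's formula $K_X^2 = p^{\epsilon(X/k)}(K_Y + (p-1)C)^2$ then reduces to explicit intersection computations on $\mathbb{P}^2$ and $\mathbb{P}(1,1,2)$, yielding $K_X^2 \in \{1, 2\}$ as claimed; the trichotomy $K_X^2 \geq 3$, $=2$, $=1$ in the tame branch is simply a partition of the positive integer $K_X^2$.

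For the tame branch $h^1(X, \mathcal{O}_X) = 0$, \autoref{p-bound-irregularity-normal}(1) gives $H^1(X, \omega_X^n) = 0$ for every $n \in \mathbb{Z}$, and Serre duality gives $H^2(X, \omega_X^{-n}) = H^0(X, \omega_X^{n+1})^\vee = 0$ for $n \geq 0$. This is exactly the vanishing input that the classical proofs of anticanonical embedding for Gorenstein del Pezzo surfaces require: a Riemann--Roch computation of $h^0(X, \omega_X^{-n})$ combined with Castelnuovo--Mumford regularity then produces the claimed very ampleness of $\omega_X^{-1}$, $\omega_X^{-2}$, $\omega_X^{-3}$ in the respective ranges of $K_X^2$.

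For the irregular branch, the key observation is that Serre duality upgrades the vanishing $h^1(X, \omega_X^n) = 0$ for $n \geq 2$ from \autoref{p-bound-irregularity-normal}(2)--(4) to $H^1(X, \omega_X^{-n}) = 0$ for every $n \geq 1$, using $h^1(X, \omega_X^n) = h^1(X, \omega_X^{1-n})$. A Riemann--Roch calculation using $h^1(X, \mathcal{O}_X) \in \{1, 2\}$ and $K_X^2 \in \{1, 2\}$, together with $h^2(X, \omega_X^{-1}) = h^0(X, \omega_X^{2}) = 0$, then pins down $h^0(X, \omega_X^{-1}) = 0$, and another application of Serre duality yields $H^2(X, \omega_X^{2}) = 0$. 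Combining these vanishings with the global generation of $\omega_X^{-2}$ ($p=2$) or $\omega_X^{-3}$ ($p=3$) from \autoref{c-gg-dp-int}, I apply \autoref{p-very-ampleness-0} with $(\mathcal{A}, \mathcal{L}) = (\omega_X^{-2}, \omega_X^{-4})$ in characteristic $2$, respectively $(\omega_X^{-3}, \omega_X^{-4})$ in characteristic $3$: in each case the required $0$-regularity reduces to the vanishings just established, producing $\omega_X^{-6}$ and $\omega_X^{-7}$ very ample respectively.

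The main obstacle is the characteristic $3$ bound, since the routine application of \autoref{c-gg-dp-int} alone produces only $\omega_X^{-9}$, and the sharpening to $\omega_X^{-7}$ relies crucially on the extra vanishing $H^2(X, \omega_X^{2}) = 0$. This in turn depends on the dimension count $h^0(X, \omega_X^{-1}) = 0$, which is not automatic and requires both the Serre-duality upgrade of \autoref{p-bound-irregularity-normal}'s vanishing and the numerical compatibility forced by $h^1(X, \mathcal{O}_X) = 2$ and $K_X^2 = 1$; were either of these invariants larger, the argument would not close. Once this numerology is verified, the remainder of the proof is a direct assembly of the results proved earlier in the paper.
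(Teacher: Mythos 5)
Your proposal is correct and for cases (4) and (5) follows essentially the same route as the paper: after classifying via \autoref{p-bound-irregularity-normal}, both apply the $0$-regularity criterion \autoref{p-very-ampleness-0} with $(\mathcal{A},\mathcal{L})=(\omega_X^{-2},\omega_X^{-4})$ in characteristic $2$ and $(\omega_X^{-3},\omega_X^{-4})$ in characteristic $3$, with the required cohomology vanishings supplied by Serre duality and Riemann--Roch; your computation of $K_X^2$ via $\epsilon(X/k)=0$ and Tate's formula is also the right one. For the tame cases (1)--(3), the paper simply cites \cite[Proposition 2.14]{BT22} (geometrically normal, hence geometrically canonical) together with \cite[Corollary 4.10]{Rei94} (non-normal but tame) rather than re-deriving the very ampleness bounds; your sketch of a direct Riemann--Roch/regularity argument is a reasonable alternative, though it would need to be split according to whether $X_{\overline{k}}$ is normal.

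One imprecision in your third paragraph: the claim that Riemann--Roch ``pins down $h^0(X,\omega_X^{-1})=0$'' is not true in characteristic $2$, where $\chi(\omega_X^{-1}) = (1-h^1(\mathcal{O}_X)) + K_X^2 = K_X^2 \in \{1,2\}$ and the intermediate vanishings force $h^0(\omega_X^{-1}) = K_X^2 > 0$, hence $H^2(X,\omega_X^2) \neq 0$. This does not damage the argument because in that case the second regularity condition is $H^2(X,\omega_X^{-4}\otimes\omega_X^{4}) = H^2(X,\mathcal{O}_X)=0$, which is automatic; and your final paragraph correctly isolates the genuine need for $h^0(\omega_X^{-1})=0$ to the $p=3$, $h^1(\mathcal{O}_X)=2$, $K_X^2=1$ case, where indeed $\chi(\omega_X^{-1})=0$ gives the vanishing. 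So the idea is sound, but the middle paragraph should restrict that dimension count to characteristic $3$.
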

\begin{proof}
Claims (1), (2), and (3) are a consequence of \cite[Proposition 2.14]{BT22}
%\cite[Corollary 4.5]{HW81} 
if $X$ is geometrically normal, hence geometrically canonical, and \cite[Corollary 4.10]{Rei94} if $X$ is not geometrically normal.

Let us prove Claim (4).  By \autoref{p-bound-irregularity-normal}, we have $p=2$ and the desired classification of $(Y,C)$. By \autoref{l-gg-dp}, $\omega_X^{-2}$ is globally generated. Using \autoref{p-bound-irregularity-normal}, it is easy to check that $\omega_X^{-4}$ is $0$-regular with respect to $\omega_X^{-2}$.
Claim (5) is proven similarly.
\end{proof}

\subsection{Refinements in the regular and canonical case}

We show various refinements of the bounds of \autoref{p-bound-irregularity-normal} in the case where we assume $X$ to be a  regular or canonical del Pezzo surface.
We start with the case $p=3$.

\begin{proposition} \label{p-wildp=3}
Let $X$ be a geometrically integral canonical del Pezzo surface over a field $k$ of characteristic $p = 3$. Then, $X$ is tame.
\end{proposition}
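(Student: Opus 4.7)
The plan is to combine the two classification results already established in the paper: \autoref{t-recap}(3), which describes the normalized base change of a non-geometrically-normal canonical del Pezzo surface in characteristic~$3$, and \autoref{p-bound-irregularity-normal}, which classifies geometrically integral normal lci del Pezzo surfaces with nonzero irregularity.

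First I would verify that $X$ satisfies the hypotheses of \autoref{p-bound-irregularity-normal}. Canonical surface singularities in any characteristic are rational Gorenstein double points, and by the classical characterization of Gorenstein surface singularities of multiplicity~$2$ they are hypersurface. In particular, $X$ is locally a complete intersection, so \autoref{p-bound-irregularity-normal} applies to $X$.

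I then argue by contradiction. Assume $h^1(X,\mathcal{O}_X) \neq 0$. By \autoref{p-bound-irregularity-normal} in characteristic $p=3$, the only possibility is $(Y,C) = (\mathbb{P}^2, 2L)$, so $X$ is not geometrically normal and the ramification divisor $C$ is the non-reduced double line $2L$. On the other hand, \autoref{t-recap}(3) asserts that every canonical del Pezzo surface in characteristic~$3$ that is not geometrically normal has $(Y, C) = (\mathbb{P}(1,1,d), L)$ for some $1 \leq d \leq 12$, whose ramification divisor $C = L$ is integral. Since $2L$ is non-reduced while $L$ is integral, the two descriptions of $C$ are incompatible, giving the desired contradiction. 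Hence $h^1(X,\mathcal{O}_X) = 0$, i.e., $X$ is tame.

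The only conceptual step is verifying the lci property so that \autoref{p-bound-irregularity-normal} is applicable; after that, the proposition is an immediate consequence of comparing the two classifications, and I do not expect any substantive obstacle.
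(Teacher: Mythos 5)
Your reduction to the l.c.i.\ case is fine (canonical surface singularities are hypersurface singularities, so \autoref{p-bound-irregularity-normal} applies), but the intended contradiction is spurious because the symbol $C$ means two different things in the two results you are comparing. In \autoref{p-bound-irregularity-normal}, $C$ is explicitly \emph{the ramification divisor} of $\nu\colon Y\to X_{\bar k}$. In \autoref{t-recap}, by contrast, $C$ is the divisor appearing in Tate's base change formula $K_Y+(p-1)C=f^*K_X$; for a geometrically integral $X$ the ramification divisor is $(p-1)C$, not $C$. So when \autoref{t-recap}(3) says $(Y,C)=(\mathbb{P}(1,1,d),L)$ with $p=3$, the ramification divisor is $2C=2L$. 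Taking $d=1$ (so $\mathbb{P}(1,1,1)=\mathbb{P}^2$), this is exactly the pair $(\mathbb{P}^2,2L)$ that \autoref{p-bound-irregularity-normal}(2) produces. The two classifications are in fact consistent, and your proof derives no contradiction.

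The paper's actual proof is substantially more involved and cannot be replaced by a comparison of the two classifications. For the regular case it fixes the numerical output of \autoref{p-bound-irregularity-normal} ($h^1(X,\mathcal{O}_X)=2$, $K_X^2=1$, $(Y,C)=(\mathbb{P}^2,2L)$), shows that $h^0(X,\omega_X^{-n}\otimes\mathcal L)=0$ for nontrivial torsion $\mathcal L$ and $n\in\{0,1\}$, deduces via Riemann--Roch and a fixed-part analysis that $|-2K_X|$ has no fixed component, uses a length count to produce a $k$-rational base point $P$, and finally shows $P$ must lie in the image of the conductor (via \autoref{c-linsystem} and the fact that $C=2L\in\nu^*|-2K_{X_{\bar k}}|$), contradicting regularity of $X$ at $P$. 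The canonical case is then reduced to the regular one via the minimal resolution, a $K$-MMP, and relative Kawamata--Viehweg vanishing. You would need to reproduce an argument of comparable depth; the shortcut you propose does not work.
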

\begin{proof}
Without loss of generality, we may assume that $k$ is separably closed. 

First, assume that $X$ is regular.
Seeking a contradiction, we assume that $h^1(X, \mathcal{O}_X) \neq 0$. 
Let $\nu \colon Y \to X_{\bar{k}}$ be the normalisation of $X_{\bar{k}}$ and let $C \subseteq Y$ be the ramification of $\nu$.
By \autoref{p-bound-irregularity-normal} and Serre duality, we know that $h^1(X,\mathcal{O}_X) = 2$, $K_X^2 = 1$, $h^1(X,\omega_X^{-n}) = 0$ for $n > 0$, and $(Y,C) = (\mathbb{P}^2,2L)$. 

First, we claim that $h^0(X,\omega_X^{-n} \otimes \mathcal{L}) = 0$ for all non-trivial torsion line bundles $\mathcal{L}$ and for $n \in \left\{0, 1\right\}$. 
Since $X$ is reduced, this holds if $n = 0$.
For the case $n=1$, by the Riemann--Roch theorem we have
$$
\chi(\omega_X^{-1} \otimes \mathcal{L}) = 0,
$$
so if $h^1(X,\omega_X^{-1} \otimes \mathcal{L}) \neq 0$, then $h^1(X,\omega_X^{2} \otimes \mathcal{L}^{-1}) \neq 0$ by Serre duality. 
Since $\omega_X^{6} \otimes \mathcal{L}^{-3} \cong \omega_X^{6}$ by \autoref{c-length-frob} and $h^1(X,\omega_X^{6}) = 0$ by \autoref{p-bound-irregularity-normal}, there exists a non-trivial $\alpha_{(\omega_X^{2} \otimes \mathcal{L}^{-1})}$-torsor $Z \to X$ such that $Z$ is an l.c.i. del Pezzo surface. Moreover, by \cite[Equation (1.2.5)]{Mad16}, we have
$
2^e(1-q_Z)= 10
$
for some integers $0 \leq e \leq 1$ and $q_Z \geq 0$, contradicting our assumption.

By Riemann--Roch, we have $h^0(X,\omega_X^{-2}) = 2$.
Write
$$
|-2K_X| = F + |M|
$$
where $F$ is the fixed part and $M$ is the movable part of the linear system. Since $M \neq 0$, we have $F \in |-nK_X + E|$ for some $0 \leq n \leq 1$ and a divisor $E$ such that $\mathcal{O}_X(E)$ is torsion. By the previous paragraph, we have $h^0(X,\omega_X^{-n}(E)) = 0$, hence $F = 0$.

 Since the linear system $|-2K_X|$ does not have fixed components, its base locus $Z$ is $0$-dimensional and we denote by $A$ the ring of global section $H^0(Z, \mathcal{O}_Z)$. 
 Since $(-2K_X)^2 = 4$, we have ${\rm length}_k(A) = 4$, so $A$ is an Artinian $k$-algebra of length $4$. As $k$ is separably closed, we can write $A = \prod_{i=1}^s A_i$ where each $A_i$ is a local Artinian $k$-algebra of dimension $n_i$ over its residue field $k_i$ and $k_i$ is a purely inseparable extension of $k$.
Since 
$$
4 = {\rm length}_k(A) = \sum_{i=1}^s {\rm length}_k(A_i) = \sum_{i=1}^s n_i [k_i:k]
$$
and $p = 3$, we have $k_i = k$ for at least one $i$. In other words, at least one of the base points of $|-2K_X|$ is a $k$-rational point $P$.

If $P$ is in the image of the conductor $D$ under the natural map $X_{\bar{k}} \to X$, then $P$ lies in the non-smooth locus of $X \to \Spec k$, so $X$ cannot be regular at $P$ by \cite[Corollary 2.6]{FS20}. Hence, in this case, the proof is finished.

So, seeking a contradiction, assume that $P$ is not in the image of $D$. Let $P'$ be the unique preimage of $P$ under the map $Y \to X_{\bar{k}} \to X$. Since $\nu$ is an isomorphism around $P'$, the point $P'$ lies in the base locus of $\nu^* |-2K_{X_{\bar{k}}}|$. By \autoref{c-linsystem}, we know that $C = 2L \in \nu^* |-2K_{X_{\bar{k}}}|$, hence $P' \in C$ and thus $P$ is in the image of $D$ under $X_{\bar{k}} \to X$. This contradicts our choice of $P$.

Finally, assume that $X$ is canonical.
By \autoref{p-klt-rational} we can replace $X$ with its minimal resolution, which is a regular weak del Pezzo surface. 
By running a $K_X$-MMP we can suppose $X$ is a weak del Pezzo surface admitting a Mori fibre space structure $\pi \colon X \to B$. 
If $X$ is a regular del Pezzo surface, we conclude by the previous case.  
If $B$ is a curve and $X$ is a weak del Pezzo surface, then the generic fibre $F$ is a regular conic. 
As $p = 3$, $F$ is smooth by \cite[Lemma 2.17]{BT22} and thus $-K_B$ is ample by \cite[Corollary 4.10.c]{Eji19}.
Therefore $H^1(B, \mathcal{O}_B)=0$ and,
as $H^1(X, \mathcal{O}_X)=H^1(B, \mathcal{O}_B)=0$ by the relative Kawamata--Viehweg vanishing theorem \cite[Theorem 4.2]{Tan18b}, we conclude.
\end{proof}

In the following proposition, we describe the geometry of $\alpha_{\omega_X}$-torsors over wild regular del Pezzo surfaces in characteristic $p = 2$. The reader should compare this with the construction of the regular wild del Pezzo surfaces of degree $1$ in \cite{Mad16}.

\begin{proposition} \label{p-wildp=2}
Let $X$ be a geometrically integral regular del Pezzo surface over a field $k$ of characteristic $p = 2$. Assume that $h^1(X,\mathcal{O}_X) \neq 0$. 
Then, $\pdeg(k) \geq 2$, $K_X^2 \leq 2$, and there exists an $\alpha_{\omega_X}$-torsor $Z \to X$ such that $Z$ satisfies the following properties:

\begin{enumerate}
\item If $K_X^2 = 2$, then $k_Z \coloneqq h^0(Z,\mathcal{O}_Z)$ is a purely inseparable extension of $k$ of degree $2$ and $Z$ is a twisted form of $\mathbb{P}(1,1,2)$ over $k_Z$.
\item If $K_X^2=1$, then $Z$ is a normal tame del Pezzo surface such that $\epsilon(Z/k)=1$, $K_Z^2=8$ and the normalised base change $((Z_{\overline{k}})_{\red}^{\norm}, E)$ is $(\mathbb{P}^2,L)$.
\end{enumerate}

\end{proposition}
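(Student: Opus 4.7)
The strategy is to apply \autoref{p-bound-irregularity-normal} to pin down the invariants of $X$, and then to construct the torsor by combining the $\alpha$-torsor formalism of \autoref{p-bound-b&b} with the Picard-scheme analysis of \autoref{c-length-frob}. Since $X$ is regular, geometrically integral, l.c.i., and satisfies $h^1(X,\mathcal{O}_X)\neq 0$, cases (3) and (4) of \autoref{p-bound-irregularity-normal} together with \autoref{c-length-frob} immediately give $p=2$, $h^1(X,\mathcal{O}_X)=1$, $\ell_F(X/k)=1$, $K_X^2\in\{1,2\}$, and the shape of the normalised base change $(Y,C)$.

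For the construction of $\pi\colon Z\to X$, recall that non-trivial $\alpha_{\omega_X}$-torsors over $X$ are classified by $\ker\bigl(F^*\colon H^1(X,\omega_X)\to H^1(X,\omega_X^{2})\bigr)$, a subspace of the $1$-dimensional $k$-vector space $H^1(X,\omega_X)$. I would establish that $F^*$ vanishes by combining the identification $H^1(X,\mathcal{O}_X)=\mathrm{Lie}\,\Pic^0_{X/k}$ (\autoref{prop: picard-dP}) with \autoref{c-length-frob}, which shows that $\Pic^0_{X_{\bar k}/\bar k}\cong\mathbb{G}_{a,\bar k}$ and hence the $p$-operation on $\mathrm{Lie}\,\Pic^0$ is zero; Serre/Cartier duality then transports this to the vanishing of $F^*$ on $H^1(X,\omega_X)$. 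Fixing a non-zero class produces a degree-$2$ purely inseparable cover $\pi\colon Z\to X$; the $\alpha$-torsor calculus as in the proof of \autoref{p-bound-b&b} shows that $Z$ is Gorenstein l.c.i.\ with $\omega_Z=\pi^*\omega_X^{\otimes 2}$, hence a del Pezzo with $K_Z^2=8K_X^2$ as a $k$-intersection, and the filtration $0\to\mathcal{O}_X\to\pi_*\mathcal{O}_Z\to\omega_X^{-1}\to 0$ gives $\chi_k(\mathcal{O}_Z)=K_X^2$. Setting $k_Z:=H^0(Z,\mathcal{O}_Z)$ and using $h^2(Z,\mathcal{O}_Z)=0$, this becomes the master identity $[k_Z:k](1-h^1(Z,\mathcal{O}_Z))=K_X^2$.

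The two cases then follow by arithmetic. If $K_X^2=2$, the unique solution is $[k_Z:k]=2$ and $h^1(Z,\mathcal{O}_Z)=0$; the base change $Z\otimes_{k_Z}\bar k$ is a geometrically integral normal degree-$p$ inseparable cover of $X_{\bar k}$, and the condition $\ell_F(X/k)=1$ forces it to coincide with the normalisation $Y\cong\mathbb{P}(1,1,2)$, so $Z$ is a $k_Z$-form of $\mathbb{P}(1,1,2)$. If $K_X^2=1$, then $[k_Z:k]=1$ and $h^1(Z,\mathcal{O}_Z)=0$, so $Z$ is tame with $K_Z^2=8$; normality of $Z$ follows from flatness of $\pi$ combined with the Jacobian criterion applied at codimension-$1$ points of the regular surface $X$. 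Applying \autoref{t-recap} to the canonical del Pezzo $Z$ and matching against the volume formula $K_Z^2=2^{\epsilon(Z/k)}(K_V+(p-1)C)^2$ from the proof of \autoref{p-bound-volume-canonical-dP} eliminates all possibilities other than $(\mathbb{P}^2,L)$ with $\epsilon(Z/k)=1$. Finally, $\pdeg(k)\geq 2$ drops out: in case (1) because $Z/k_Z$ is itself geometrically non-normal, forcing $\pdeg(k_Z)\geq 1$ and hence $\pdeg(k)\geq 2$; in case (2) because $\epsilon(Z/k)=1$ stacks with $\ell_F(X/k)=1$ to produce two independent purely inseparable steps.

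The main obstacle I anticipate is the verification that $F^*\colon H^1(X,\omega_X)\to H^1(X,\omega_X^{2})$ vanishes, since this is the only ingredient ensuring the existence of a non-trivial $\alpha_{\omega_X}$-torsor; should the duality argument sketched above prove insufficient, I would fall back on a direct Riemann--Roch analysis of $h^1(X,\omega_X^{-1})$ using the section bounds from \autoref{p-bound-irregularity-normal}. A secondary subtlety lies in case (1), namely ruling out toric alternatives for $Z\otimes_{k_Z}\bar k$ other than $\mathbb{P}(1,1,2)$; this should follow from comparing the purely inseparable degrees of $Z\otimes_{k_Z}\bar k\to X_{\bar k}$ and $Y\to X_{\bar k}$ and invoking $\ell_F(X/k)=1$.
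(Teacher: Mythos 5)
Your overall skeleton matches the paper's: reduce via \autoref{p-bound-irregularity-normal} to the two cases $K_X^2\in\{1,2\}$ with $(Y,C)$ determined, produce a non-trivial $\alpha_{\omega_X}$-torsor, and feed Maddock's Euler-characteristic identity $[k_Z:k](1-h^1(Z,\mathcal{O}_Z))=K_X^2$ into the two cases. However, there are several genuine gaps.

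First, your route to the existence of the torsor is much harder than needed: \autoref{p-bound-irregularity-normal} already gives $h^1(X,\omega_X^{2})=0$, so $F^*\colon H^1(X,\omega_X)\to H^1(X,\omega_X^{2})$ vanishes because its \emph{target} is zero; no Lie-algebra/Cartier-duality argument is required. This part is not wrong, only unnecessary, and your own Riemann--Roch fallback is really the same trivial observation.

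Second, and this is the serious gap, in case $K_X^2=1$ you assert that normality of $Z$ ``follows from flatness of $\pi$ combined with the Jacobian criterion applied at codimension-$1$ points of the regular surface $X$.'' This is incorrect: a degree-$p$ $\alpha_{\mathcal{L}}$-torsor over a regular surface can easily fail to be normal (indeed the paper shows in this very case that $Z$ is \emph{not} geometrically reduced by base-changing the torsor to $\mathbb{P}^2$, where it trivialises). The paper proves normality by contradiction: if $Z^{\norm}\to Z$ is non-trivial then $K_{Z^{\norm}}^2>8$, and Maddock's Proposition~2.2.1 forces $Z^{\norm}\to X$ to itself be an $\alpha_{\mathcal{M}}$-torsor with $\mathcal{M}\equiv mK_X$; the resulting identities contradict the bound $h^1(X,\omega_X^{n})=0$ for $n\geq 2$. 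You also omit the geometric non-reducedness of $Z$ entirely, which is what gives the lower bound $\epsilon(Z/k)\geq 1$; without it the equality $\epsilon(Z/k)=1$ is unjustified.

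Third, your argument for $\pdeg(k)\geq 2$ is internally inconsistent: in case (1) you claim ``$Z/k_Z$ is itself geometrically non-normal,'' but the conclusion of that very case is that $Z$ is a twisted form of $\mathbb{P}(1,1,2)$, i.e.\ geometrically normal. The paper establishes $\pdeg(k)\geq 2$ up front and independently of the torsor, by observing that $\rho(X)=1$ and that if $\pdeg(k)=1$ then $X$ would be geometrically canonical (by the results of Fanelli--Schr\"oer), contradicting $h^1(X,\mathcal{O}_X)\neq 0$. Finally, in case (1) you assert without justification that $Z_{\bar k}$ is normal; the paper derives this from the numerical coincidence $K_Z^2=8=K^2_{(Z_{\bar k})^{\norm}}$, which shows the normalisation of $Z_{\bar k}$ is an isomorphism.
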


\begin{proof}
If $X$ is not tame, then $\rho(X)=1$ by \autoref{p-bound-irregularity-normal}. If $\pdeg(k)=1$, then $X$ is geometrically canonical by \cite{FS20}, contradicting $h^1(X, \mathcal{O}_X) \neq 0$.

By \autoref{p-bound-irregularity-normal}, we have $h^1(X,\mathcal{O}_X) = h^1(X,\omega_X) = 1$, $h^1(X,\omega_X^{n}) = 0$ for $n \geq 2$, and $K_X^2 \in \{1,2\}$. In particular, there exists a non-trivial $\alpha_{\omega_X}$-torsor $f \colon Z \to X$. In the following, we treat the cases $K_X^2 = 2$ and $K_X^2 = 1$ separately. We set $k_Z \coloneqq H^0(Z,\mathcal{O}_Z)$. Note that, by the same proof as for the second paragraph of the proof of \autoref{p-wildp=3}, we have $h^1(X,\omega_X^{n} \otimes \mathcal{L}) = 0$ for all torsion line bundles $\mathcal{L}$ and all $n \geq 2$. 

Assume $K_X^2 = 2$. In this case, by \cite[Equation (1.2.5)]{Mad16}, we have $[k_Z:k] = 2$, $h^1(Z,\mathcal{O}_Z) = 0$, and $K_Z^2 = 8$, where we compute the self-intersection over $k_Z$. Now, consider the commutative diagram
$$
\xymatrix{
(Z_{\bar{k}})^{\norm} \ar[r] \ar^{(\pi_{\bar{k}})^{\norm}}[d] & Z_{\bar{k}} \ar[r] \ar^{\pi_{\bar{k}}}[d] & Z \ar^{\pi}[d] \ar^{f}[dr] & \\
(X_{\bar{k}})^{\norm} \ar[r]  & X_{\bar{k}} \ar[r] \ar[d] & X_{k_Z} \ar[r] \ar[d] & X \ar[d] \\
& \Spec \bar{k} \ar[r] & \Spec k_Z \ar[r] & \Spec k,
}
$$
where $Z_{\bar{k}} = Z \times_{\Spec k_Z} \Spec \bar{k}$. As in the end of the proof of \autoref{p-bound-irregularity-normal}, the diagram shows that $Z$ is geometrically integral when considered as a $k_Z$-scheme and $(\pi_{\bar{k}})^{\norm}$ is an isomorphism, hence $(Z_{\bar{k}})^{\norm} \cong (X_{\bar{k}})^{\norm} \cong \mathbb{P}(1, 1, 2)$ by \autoref{p-bound-irregularity-normal}. 
In particular, we have $K_Z^2 = 8 = K^2_{(Z_{\bar{k}})^{\norm}}$, so $Z$ is in fact geometrically normal. Therefore, $Z_{\bar{k}} \cong \mathbb{P}(1,1,2)$, so $Z$ is a twisted form of $\mathbb{P}(1,1,2)$ over $k_Z$.

Assume $K_X^2=1$. In this case, by \cite[Equation (1.2.5)]{Mad16} we have $k_Z = k$ and $h^1(Z, \mathcal{O}_Z)=0$. 
We first claim that $Z$ is not geometrically reduced.
Indeed, let $\psi \colon \mathbb{P}^2 \to X$ be the normalised base change and consider the $\alpha_{\psi^*\omega_X}$-torsor $T:=Z \times_{X} \mathbb{P}^2 \to \mathbb{P}^2$ obtained by base changing along $\psi$. As $\psi^*\omega_X$ is anti-ample, considering the exact sequence $$0 = H^0(\mathbb{P}^2, \psi^*\omega_X^p) \to H^1_{{\fppf}}(\mathbb{P}^2, \alpha_{\psi^*\omega_X}) \to H^1(\mathbb{P}^2, \psi^*\omega_X) = 0$$ we have that $H^1_{{\fppf}}(\mathbb{P}^2, \alpha_{\psi^*\omega_X})=0$, so that $T \to \mathbb{P}^2$ is a trivial torsor and thus $T$ is not reduced.
Since $T \to Z_{\bar{k}}$ is generically an isomorphism, $Z$ is not geometrically reduced.

Next, we claim that $Z$ is normal. 
Suppose by contradiction it is not and let $\nu \colon Z^{\norm} \to Z$ be the normalisation. In this case, as $\nu$ is not an isomorphism, we have that $K_{Z^{\norm}}^2>8$, where we calculate the self-intersection number over $k$. Let $g \colon Z^{\norm} \to X$ be the composition $f \circ \nu$. As $X$ is regular and $Z^{\norm}$ is integral, by \cite[Proposition 2.2.1]{Mad16} there exists a line bundle $\mathcal{M}$ numerically equivalent to $mK_X$ for some integer $m$ such that $g$ is a non-trivial $\alpha_\mathcal{M}$-torsor. In particular, $Z^{\norm}$ is Gorenstein and thus, as it is the normalization of a del Pezzo surface, $Z^{\norm}$ is also a del Pezzo surface. Therefore, $m \geq 0$.
We now distinguish two cases:
\begin{enumerate}
    \item if $H^0(Z^{\norm}, \mathcal{O}_{Z^{\norm}}) =k$, by \cite[Equation (1.2.4)]{Mad16} we have $2(1+m)^2 = K_{Z^{\norm}}^2>8$, which implies that $m>1$,  contradicting \autoref{p-bound-irregularity-normal};
    \item if $[H^0(Z^{\norm}, \mathcal{O}_{Z^{\norm}}):k ]=2$, we have that $(1+m)^2=K_{Z^{\norm}}^2>4$, which also implies that $m>1$,  contradicting \autoref{p-bound-irregularity-normal} as well. Note that, here, we calculate the self-intersection $K_{Z^{\norm}}^2$ over $H^0(Z^{\norm}, \mathcal{O}_{Z^{\norm}})$. 
\end{enumerate}
Thus, $Z$ must be normal.

By \cite[Example 1]{Kle66}, we have that $8=K_Z^2=2^{\epsilon(Z/k)}(K_{(Z_{\bar{k}})_{\red}^{\norm}} + E)^2 $.
Since $Z$ is geometrically non-reduced, we have $\epsilon(Z/k) \geq 1$, and since $f$ is finite flat of degree $2$ and $\epsilon(X/k) = 0$, we have $\epsilon(Z/k) \leq 1$, hence $\epsilon(Z/k) = 1$.
As $Z$ is normal and Gorenstein, we can apply \cite[Theorem 4.1]{PW} to conclude that $Z=\mathbb{P}^2$ and $E$ is a line.
\end{proof}

\begin{proof}[Proof of \autoref{t-intro-irr}]
Combine \autoref{p-bound-irregularity-normal}, \autoref{p-wildp=3} and \autoref{p-wildp=2}.
\end{proof}

%\subsection{Refinements in $p$-degree one}

%In the case where the $p$-degree is at most one, we can further improve our results using the main result of \cite{FS20}.

%\begin{corollary}\label{p-sharp-tameness-3}
%Let $k$ be a field of characteristic $p>0$ with $\pdeg(k) \leq 1$.
%\begin{enumerate}
 %\item If $p \geq 3$, then weak canonical del Pezzo surfaces $X$ over $k$ are tame;
 %\item if $p=2$, then regular del Pezzo surfaces are tame.
%\end{enumerate}
%\end{corollary}

%\begin{proof}
%We first discuss the case $p \geq 3$.
%By \autoref{p-klt-rational} we can replace $X$ with its minimal resolution. 
%By running a $K_X$-MMP we can suppose it admits a Mori fibre space structure $\pi \colon X \to B$. 
%If $B$ is a point, then $\rho(X)=1$ and therefore $Y$ is geometrically normal by \cite[Theorem 14.1]{FS20} and thus tame.  
%If $B$ is a curve, then the generic fibre $F$ is a regular conic.  As $p \geq 3$, $F$ is smooth by \cite[Lemma 2.17]{BT22} and thus $-K_B$ is ample by \cite[Corollary 4.10.c]{Eji19}. Therefore $H^1(B, \mathcal{O}_B)=0$ and, as $H^1(X, \mathcal{O}_X)=H^1(B, \mathcal{O}_B)=0$ by relative Kawamata--Viehweg vanishing theorem \cite[Theorem 4.2]{Tan18b}, we conclude.

%We now discuss the case $p=2$. If $X$ is not tame, then $\rho(X)=1$ by \autoref{p-bound-irregularity-normal}. 
%Therefore $X$ is geometrically normal by \cite{FS20}, reaching a contradiction.
%\end{proof}

\section{On the BAB conjecture for surfaces over arbitrary fields}
\label{s-BAB}

In this section, we prove boundedness results for del Pezzo surfaces over arbitrary fields.

We recall some terminology when discussing boundedness in birational geometry.
For the following definition,
we say that a scheme $X$ is a \emph{projective variety} if $X$ is integral, $H^0(X, \mathcal{O}_X)$ is a field $k$ and the natural morphism $\pi_X \colon X \to \Spec(k)$ is projective. 
We will always consider $X$ as a $k$-variety via the natural morphism $\pi_X$.

\begin{definition}
We say that a class of projective varieties $\mathcal{X}$ is \emph{bounded} (resp. \emph{birationally bounded}) if there exists a projective flat morphism $Y \to T$ of finite type $\mathbb{Z}$-schemes such that for every $X \in \mathcal{X}$ with $k \coloneqq H^0(X,\mathcal{O}_X)$, there exists a morphism $\Spec(k) \to T$ and a $k$-isomorphism (resp. a $k$-birational map) $X \to Y \times_V \Spec(k)$. 
\end{definition}

The Borisov--Alexeev--Borisov (BAB) conjecture states that mildly singular Fano varieties form a bounded family in every dimension.

\begin{conjecture}[BAB]\label{BAB-conj}
For any rational number $\varepsilon>0$, the class
$$\mathcal{X}_{{d, \varepsilon}} = \left\{X \mid X \emph{ is a } 
\varepsilon \emph{-klt Fano variety of dimension } d \right\} $$
is bounded.
\end{conjecture}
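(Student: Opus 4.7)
The plan is to follow the strategy of Birkar \cite{Bir21}, who proved \autoref{BAB-conj} in characteristic zero, and to adapt each of its ingredients to positive characteristic by invoking the results developed in this article at the base of induction. The argument proceeds by induction on $d$: the case $d=1$ is trivial (Fano curves are conics), while the case $d=2$ over perfect fields is classical \cite{Ale94, AM04} and over arbitrary fields is the content of \autoref{t-intro-bdd} in the tame or $\mathrm{char}>5$ regime. For the inductive step, the core task is to produce, depending only on $d$ and $\varepsilon$, constants $m=m(d,\varepsilon)$ and $V=V(d,\varepsilon)$ such that $|{-mK_X}|$ defines a birational map and $(-K_X)^d \leq V$. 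Combined with a uniform bound on the Cartier index of $K_X$, this yields boundedness by a Matsusaka-type argument.

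The heart of the method is the theory of \emph{bounded complements}: one aims to show that there exists $n=n(d,\varepsilon)$ such that every $\varepsilon$-klt Fano variety $X$ carries an effective $\mathbb{Q}$-divisor $\Delta$ with $n(K_X+\Delta)\sim 0$ and $(X,\Delta)$ log canonical. Complements are constructed by running an auxiliary MMP on a dlt modification, producing a Mori fibre space $f\colon Y\to B$, and lifting sections from a suitable adjunction on $B$ (whose dimension is strictly smaller than $d$) via vanishing theorems. The generic fibre of such an MFS is an $\varepsilon'$-klt Fano variety over the imperfect field $k(B)$, so boundedness statements of the type proved in \autoref{t-intro-bdd}, together with their higher-dimensional analogues, are precisely what is needed to close the induction. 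Once complements are bounded, effective birationality of $|{-mK_X}|$ follows from creating and moving non-klt centres, and the volume bound from comparing volumes under the birational map.

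The principal obstacles in positive characteristic are threefold. First, Kawamata--Viehweg vanishing fails, so Birkar's lifting arguments must be replaced by Frobenius-based surrogates such as the $\alpha$-torsor techniques exploited here (see \autoref{p-bound-b&b}) or $F$-singularity methods. Second, the full MMP is currently available only up to dimension three, and only for $p>5$, preventing the inductive step from even being formulated in higher dimensions. Third, over imperfect residue fields $k(B)$ one must cope with positive irregularity, geometric non-normality, and failure of generic smoothness -- exactly the pathologies dissected in \autoref{t-intro-irr}. I expect (ii) to be the decisive difficulty: without a workable MMP for $\mathbb{Q}$-factorial klt varieties in dimension $d\geq 4$ in positive characteristic, the inductive scheme does not even launch, so an unconditional proof in higher dimensions appears out of reach with present technology. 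The realistic near-term target is therefore a \emph{conditional} version: BAB in dimension $d$, assuming existence of the relevant MMP and suitable boundedness of complements for the imperfect-field generic fibres -- an enterprise to which the present article contributes the two-dimensional input.
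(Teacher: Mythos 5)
The statement you were asked to prove is labelled a \emph{conjecture} in the paper, and the paper does not prove it: it explicitly records that \autoref{BAB-conj} is open in positive characteristic and establishes only the two-dimensional instances of \autoref{t-intro-bdd} (geometrically integral tame $\varepsilon$-klt, or residue characteristic $>5$, del Pezzo surfaces). Your submission is likewise not a proof. It is a strategy sketch modelled on Birkar's characteristic-zero argument, and you yourself concede the decisive gap: the inductive step requires the MMP (and boundedness of complements) in dimension $d$ over fields of positive characteristic, which is unavailable for $d\geq 4$ and only partially available for $d=3$. A proposal whose conclusion is that ``an unconditional proof appears out of reach'' cannot be accepted as a proof of the statement; at best it is a correct assessment of why the statement remains conjectural. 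There is also a smaller inaccuracy: the failure of Kawamata--Viehweg vanishing is not merely a technical nuisance to be patched by Frobenius surrogates --- it is the source of genuinely new phenomena (wild del Pezzo surfaces with $h^1(X,\mathcal{O}_X)\neq 0$, geometric non-normality) that force the characteristic and tameness hypotheses in the cases that \emph{are} proved.

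For the two-dimensional cases that the paper does establish, note that its route differs from the one you outline. Rather than bounded complements, the paper follows Alexeev--Mori: it bounds the anticanonical volume $K_X^2$ by explicit intersection-theoretic computations on a Mori fibre space obtained from the minimal resolution (\autoref{t-bounds-volume}), bounds the Picard rank of the minimal resolution (\autoref{prop: bounds_Picard}) and hence the Cartier index of $K_X$ (\autoref{lem: bounds_Gorenstein_index}), and then concludes via Riemann--Roch, finiteness of Hilbert polynomials, and a Hilbert-scheme argument (\autoref{cor: bounded-Gorenstein-index-dP}); the input on irregularity from \autoref{t-intro-irr} and \cite{BT22} is what makes the Riemann--Roch step effective over imperfect fields. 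If you wish to contribute to the conjecture rather than survey it, the concrete open problem flagged by the paper is a bound on $h^1(X,\mathcal{O}_X)$ for $\varepsilon$-klt (not just canonical) del Pezzo surfaces in characteristics $2$, $3$, and $5$.
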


\begin{remark}
The presence of the $\varepsilon$--klt hypothesis is necessary in the BAB conjecture, already in dimension 2.
Indeed, Gorenstein del Pezzo surfaces with general log canonical singularities are not bounded as cones over elliptic curves of \autoref{l-HW} show.
Moreover, boundedness already fails for klt del Pezzo surfaces as the set of weighted projective planes $\left\{\mathbb{P}(1,1,d)\right\}_{d \geq 1}$ shows.
\end{remark}

We discuss the BAB conjecture for surfaces defined over arbitrary fields.
The result of \cite{Ale94, CTW17} shows that the class of geometrically $\varepsilon$-klt del Pezzo surfaces form a bounded family. 
However, the conjecture is still open for $\varepsilon$-klt del Pezzo surfaces defined 
over an imperfect field.

In \autoref{ss-canonical}, we settle the BAB conjecture for geometrically integral canonical del Pezzo surfaces.
In the remaining two subsections, we discuss the general $\varepsilon$-klt case. 
In \autoref{ss-bound-vol-eps} prove boundedness of the anticanonical volumes for $\varepsilon$-klt del Pezzo surfaces over imperfect fields.
This, together with a bound on the $\mathbb{Q}$-Gorenstein index proven in \autoref{ss-bound-Qindex-eps}, implies the BAB \autoref{BAB-conj} for such surfaces in characteristic $p>5$ (cf. \autoref{thm: BAB_imperfect}).

\subsection{Boundedness of geometrically integral canonical del Pezzo surfaces} \label{ss-canonical}

In \cite{Tan19}, Tanaka proves boundedness for geometrically integral regular del Pezzo surfaces.
As a consequence of our results in \autoref{s-boundedness} and \autoref{s-irregularity}, we can extend Tanaka's result to the canonical case.

\begin{theorem}\label{t-bab-can-dP}
The class 
$$\mathcal{X}_{\textup{dP}, \textup{can}}= \left\{ X \mid X \emph{ is a geometrically integral canonical del Pezzo surface}\right\}$$
is bounded.
\end{theorem}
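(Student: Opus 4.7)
The plan is to reduce the statement to a standard Hilbert scheme argument, once we establish uniform upper bounds on the anticanonical degree, the irregularity, and the dimension of a canonical projective embedding of every $X \in \mathcal{X}_{\textup{dP}, \textup{can}}$.

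First, I would exploit geometric integrality to control the thickening exponent. Since $X_{\bar k}$ is reduced, the local ring of $X \times_k k^{1/p^\infty}$ at its generic point is a field, whence $\epsilon(X/k) = 0$. Combined with \autoref{p-bound-volume-canonical-dP}, this yields the uniform bound $K_X^2 \leq 16$ in every characteristic, while \autoref{t-intro-irr} gives $h^1(X, \mathcal{O}_X) \leq 2$.

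Second, I would bound $h^0(X, \omega_X^{-12})$ via Riemann--Roch. Since $X$ is Gorenstein, Serre duality gives $h^1(X, \omega_X^{-12}) = h^1(X, \omega_X^{13})$ and $h^2(X, \omega_X^{-12}) = h^0(X, \omega_X^{13})$; the first vanishes by \autoref{p-bound-b&b} applied with $A = -K_X$ and $d = 13$, and the second vanishes because $\omega_X^{13}$ is anti-ample. Riemann--Roch on the Gorenstein surface $X$ then gives
$$
h^0(X, \omega_X^{-12}) \;=\; 1 - h^1(X, \mathcal{O}_X) + 78\,K_X^2 \;\leq\; 1 + 78 \cdot 16.
$$
Combined with the very ampleness of $\omega_X^{-12}$ from \autoref{c-12-v-ample}, this embeds every $X$ as a closed subscheme of $\mathbb{P}^N_k$ for a universal $N$.

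Finally, the Hilbert polynomial $P(m) = 1 - h^1(X, \mathcal{O}_X) + (72m^2 + 6m) K_X^2$ of the polarised surface $(X, \omega_X^{-12})$ depends only on the bounded integers $K_X^2$ and $h^1(X, \mathcal{O}_X)$, so it ranges over a finite set $\mathcal{P}$ of polynomials. The disjoint union of the Hilbert schemes $\textup{Hilb}^{P}_{\mathbb{P}^N_{\mathbb{Z}}}$ for $P \in \mathcal{P}$ is of finite type over $\Spec \mathbb{Z}$, and pulling back the universal family provides the desired flat projective morphism $Y \to T$. The substantial input of the whole argument, where geometric integrality enters crucially (as it would fail, for instance, for weighted projective planes $\mathbb{P}(1,1,d)$ over imperfect fields), is the uniform volume bound $K_X^2 \leq 16$; everything else is then a routine application of the effective very ampleness and vanishing results established in \autoref{s-boundedness} and \autoref{s-irregularity}.
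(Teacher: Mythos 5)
Your proposal is correct and follows essentially the same route as the paper's own proof: establish $\epsilon(X/k)=0$ from geometric integrality to get $K_X^2 \le 16$ via \autoref{p-bound-volume-canonical-dP}, bound the irregularity, invoke \autoref{c-12-v-ample} for very ampleness of $\omega_X^{-12}$, compute the finitely many possible Hilbert polynomials via Riemann--Roch, and conclude by a Hilbert scheme argument over $\Spec \mathbb{Z}$. The only cosmetic difference is that you obtain the vanishing $h^1(X,\omega_X^{-12})=0$ from Serre duality plus \autoref{p-bound-b&b}, whereas the paper cites \autoref{p-bound-irregularity-normal} directly; both are valid.
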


\begin{proof}
%By \autoref{c-12-v-ample} $\omega_X^{\otimes -12}$ is very ample.
As $X$ is geometrically integral, $\epsilon(X/k)=0$ and thus $K_X^2 \leq 16$ by \autoref{p-bound-volume-canonical-dP}.
Since $X$ is canonical, $K_X$ is Cartier and $K_X^2$ is an integer.
By \autoref{p-bound-irregularity-normal}, \autoref{c-12-v-ample}, and Riemann--Roch, there exists an $N > 0$ such that $\omega_X^{-12}$ embeds $X$ into $\mathbb{P}^n_k$ for some $n \leq N$.
Again by \autoref{p-bound-irregularity-normal}, and since $K_X^2 \leq 16$, the possibilities for the Hilbert polynomial $\chi(X, \omega_X^{ -12})$ are finite.
Therefore, all $X$ arise via pullback from a universal family over a suitable finite union of Hilbert scheme of finite type over $\Spec \mathbb{Z}$ \cite[Theorem 1.4]{Kol96}. 
\end{proof}

\subsection{Bounds on the volume of $\varepsilon$-klt del Pezzo surfaces} \label{ss-bound-vol-eps}

We prove an explicit bound for the volumes of $\varepsilon$-klt del Pezzo surfaces, generalising the results of \cite{Ale94, AM04} to imperfect fields.
To do so, we start with some elementary computations on surfaces of del Pezzo type admitting a Mori fibration onto a curve. Recall the definition of surfaces of del Pezzo type from \autoref{def: dPtype}.

\begin{lemma}\label{lem: bound-self-intersection}
 Let $X$ be a regular surface of del Pezzo type. Let $\pi \colon X \to B$ be a Mori fibre space onto a regular curve and let $F_b=\pi^*b$, where $b$ is a closed point of $B$.
Then, there exists an integral curve $\Gamma$ such that $$\NE(X)=\mathbb{R}_+[F_b]+\mathbb{R}_+[\Gamma]$$

Moreover, setting $d_{\Gamma} \coloneqq [H^0(\Gamma,\mathcal{O}_{\Gamma}):k]$ and $m_{\Gamma} = [k(\Gamma):k(B)]$, there exists $n \geq 0$ such that
\begin{equation}\label{eq: gammasquare}
\Gamma^2=-d_\Gamma \cdot n, \quad K_X \cdot \Gamma=d_\Gamma(n-2),
\end{equation}
    and
    \begin{equation}\label{eq: self-int}
    K_X^2=\frac{d_\Gamma}{m_\Gamma^2} 
    (8m_{\Gamma}+4n(1-m_{\Gamma})) \leq 8.
    \end{equation} 
\end{lemma}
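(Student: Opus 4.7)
The plan is to combine the cone theorem for $\mathbb{Q}$-factorial projective surfaces of del Pezzo type (Tanaka, \cite[Theorem 2.14]{Tan18b}) with elementary intersection-theoretic calculations. Since $\pi$ is a Mori fibration with $\rho(X/B)=1$ onto a regular curve of Picard rank $1$, we have $\rho(X)=2$, so $\NE(X)$ is rational polyhedral with exactly two extremal rays, one being $\mathbb{R}_+[F_b]$. I would pick an integral curve $\Gamma$ generating the other ray; the key subtle point, discussed below, is to arrange $h^1(\Gamma,\mathcal{O}_\Gamma)=0$.

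Given such a $\Gamma$, the intersection numbers follow from standard techniques. Since $[\Gamma]$ is not proportional to $[F_b]$, the curve $\Gamma$ is horizontal, and $\pi|_\Gamma\colon \Gamma\to B$ is finite dominant of degree $m_\Gamma$; the projection formula gives $F_b\cdot\Gamma=m_\Gamma d_b$ with $d_b=[k(b):k]$, and $F_b^2=0$ follows from disjointness of distinct fibres. The generic fibre being a regular conic over $k(B)$ of anticanonical degree $2$ yields $K_X\cdot F_b=-2d_b$ by flatness. For $\Gamma^2\le 0$: the nef class $H$ dual to $[\Gamma]$ satisfies $H\cdot\Gamma=0$ and $H^2\ge 0$ (nef divisors on projective surfaces have non-negative self-intersection); a short computation in the basis $(F_b,\Gamma)$ of $N^1(X)_\mathbb{Q}$ gives $H^2=-h_2^2\,\Gamma^2$, forcing $\Gamma^2\le 0$. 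Setting $n:=-\Gamma^2/d_\Gamma\ge 0$ and applying adjunction $(K_X+\Gamma)\cdot\Gamma=2h^1(\Gamma,\mathcal{O}_\Gamma)-2d_\Gamma=-2d_\Gamma$ then yields $K_X\cdot\Gamma=d_\Gamma(n-2)$.

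For the closed formula, I would expand $K_X\equiv \alpha F_b+\beta\Gamma$ in $N^1(X)_\mathbb{Q}$; the computed intersection numbers give $\beta=-2/m_\Gamma$ and determine $\alpha$, and substitution into $K_X^2=2\alpha\beta(F_b\cdot\Gamma)+\beta^2\Gamma^2$ produces the claimed expression. The bound $K_X^2\le 8$ reduces to $d_\Gamma\mid m_\Gamma$: since $k_\Gamma:=H^0(\Gamma,\mathcal{O}_\Gamma)$ is algebraic over $k$ and the algebraic closure of $k$ in $k(B)$ equals $H^0(B,\mathcal{O}_B)=k$, the subfields $k_\Gamma$ and $k(B)$ of $k(\Gamma)$ are linearly disjoint over $k$, so $d_\Gamma=[k_\Gamma\cdot k(B):k(B)]$ divides $[k(\Gamma):k(B)]=m_\Gamma$. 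For $m_\Gamma=1$ this forces $d_\Gamma=1$ and $K_X^2=8$; for $m_\Gamma\ge 2$, the factor $4n(1-m_\Gamma)$ is non-positive, so $K_X^2\le 8d_\Gamma/m_\Gamma\le 8$.

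The main obstacle will be the choice of $\Gamma$ with $h^1(\Gamma,\mathcal{O}_\Gamma)=0$, since the cone theorem only guarantees rationality of the normalisation $\tilde\Gamma\cong \mathbb{P}^1_{k_\Gamma}$ while $\Gamma$ itself could be non-normal with positive $\delta$-invariant. I would handle this by cases on $\Gamma^2$: if $\Gamma^2<0$ the ray is divisorial and its contraction produces a canonical (hence rational) surface singularity on the image, whose exceptional curve on the regular source $X$ is known to be smooth rational, so we may take $\Gamma$ to be that smooth curve; if $\Gamma^2=0$ the ray defines a second Mori fibration $X\to C$ with regular conic generic fibre, and $\Gamma$ can be chosen as a suitable fibre whose vanishing irregularity is inherited from the generic fibre by flatness.
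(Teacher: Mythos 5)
Your overall strategy coincides with the paper's: use the cone theorem to get the two extremal rays, compute the intersection numbers \eqref{eq: gammasquare} via adjunction, expand $K_X$ in the basis $(F_b,\Gamma)$, and bound $K_X^2$ from the resulting closed formula. The difference is that the paper simply cites \cite[Lemmas 4.3, 4.6]{BT22} for \eqref{eq: gammasquare}, whereas you attempt to reprove it from scratch. You have correctly isolated the crux of that step: one must show $h^1(\Gamma,\mathcal{O}_\Gamma)=0$, so that adjunction gives $(K_X+\Gamma)\cdot\Gamma = -2d_\Gamma$. This is genuinely subtle over imperfect fields and is exactly what BT22 handles.

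Your argument for $h^1(\Gamma,\mathcal{O}_\Gamma)=0$ in the case $\Gamma^2<0$ has a gap: the contracted singularity is \emph{not} canonical in general. Precisely when $K_X\cdot\Gamma>0$ (i.e.\ $n>2$, a case the paper explicitly allows and treats in the proof of \autoref{t-bounds-volume}), the discrepancy of $\Gamma$ over the contracted surface is negative, so the singularity fails to be canonical. What is true is that the contraction is $(K_X+\Delta)$-non-positive for the del Pezzo-type boundary $\Delta$, so the image is klt and hence has rational singularities by \autoref{p-klt-rational}; one then needs the fact (true for rational surface singularities over any field) that the exceptional curve of the minimal resolution has vanishing $h^1(\mathcal{O})$, rather than the assertion that it is ``smooth rational,'' which is a stronger and geometrically-flavoured statement. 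A further point to treat carefully is your linear-disjointness claim for $d_\Gamma\mid m_\Gamma$: that $k$ is algebraically closed in $k(B)$ gives irreducibility but not reducedness of $k_\Gamma\otimes_k k(B)$ when $k_\Gamma/k$ is inseparable and $k(B)/k$ is not separable; the paper only uses the weaker inequality $d_\Gamma\leq m_\Gamma$ (again deferred to BT22). Your treatment of the $\Gamma^2=0$ case also needs more than ``inheritance by flatness'': flatness fixes $\chi(\mathcal{O}_{F_c})$, but to get $h^1(\Gamma,\mathcal{O}_\Gamma)=0$ one must in addition rule out multiple fibres and control $h^0(\Gamma,\mathcal{O}_\Gamma)$.
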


\begin{proof}
The existence of $\Gamma$ is a consequence of the cone theorem \cite[Theorem 2.14]{Tan18b}, while Equation \eqref{eq: gammasquare} is proved in \cite[Lemmas 4.3, 4.6]{BT22}.
To prove Equation \eqref{eq: self-int}, we write $K_X \equiv xF_b+y\Gamma$ for some $x,y \in \mathbb{Q}$. 
Set $d_b = [k(b):k]$.
As $K_X\cdot F_b=-2d_b$ we conclude that $y=-\frac{2}{m_\Gamma}.$  
Therefore
$$d_\Gamma(n-2) = K_X \cdot \Gamma = xm_\Gamma d_b+\frac{2n d_\Gamma}{m_\Gamma},$$
which implies
$$K_X \equiv \frac{d_\Gamma(m_\Gamma(n-2)-2n)}{m_\Gamma^2d_b} F_b - \frac{2}{m_\Gamma}\Gamma.$$
A straightforward computation with intersection numbers then shows Equation \eqref{eq: self-int}.
Finally, as $(1-m_\Gamma) \leq 0$, we have $K_X^2 \leq \frac{d_\Gamma}{m_\Gamma} 8$ and, 
as $d_\Gamma \leq m_\Gamma,$ we conclude.
\end{proof}

We now prove bounds on the anticanonical volume of $\varepsilon$-klt del Pezzo.

\begin{theorem}\label{t-bounds-volume}
Fix a rational number $\varepsilon>0$. 
Then for every geometrically integral $\varepsilon$-klt del Pezzo surface $X$, we have $$K_X^2 \leq \max \left\{ 9, 8+ 20\frac{(1-\varepsilon)^2}{\varepsilon} \right\}.$$
\end{theorem}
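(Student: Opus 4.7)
The plan follows the Alexeev--Mori strategy \cite{AM04}: reduce $X$ to a Mori fibre space via a log MMP on the minimal resolution, then bound the anticanonical volume using \autoref{lem: bound-self-intersection}. Let $\pi \colon Y \to X$ be the minimal resolution of $X$ (which exists by \cite{Lip78}). Writing $\pi^* K_X = K_Y + B$ with $B = \sum a_i E_i$ effective (a standard feature of minimal resolutions of klt surface singularities) and $a_i \in [0, 1-\varepsilon)$ by $\varepsilon$-kltness, the pair $(Y, B)$ is $\varepsilon$-klt with $-(K_Y+B) = -\pi^*K_X$ big and nef. I would then run a $(K_Y+B)$-MMP (available by \cite[Theorem 3.3]{Tan18b}): the outcome is a composition of divisorial contractions $\rho \colon Y \to Y'$ together with a Mori fibre space $\phi \colon Y' \to B'$, and the pair $(Y', B_{Y'} := \rho_*B)$ remains $\varepsilon$-klt and geometrically integral. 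A direct intersection-theoretic computation for each divisorial step (writing $K_{Y_i} + B_i = \mu^*(K_{Y_{i+1}} + B_{i+1}) + cE$ with $c > 0$ and $E^2 < 0$, and using that there are no flips in dimension two) yields $K_X^2 = (K_Y+B)^2 \leq (K_{Y'}+B_{Y'})^2$, so it suffices to bound the latter.

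If $\dim B' = 0$, then $\rho(Y') = 1$ and, since $-(K_{Y'}+B_{Y'})$ is ample, $B_{Y'} \equiv -c\, K_{Y'}$ with $c \in [0, 1)$, hence $(K_{Y'}+B_{Y'})^2 = (1-c)^2 K_{Y'}^2$. The singular points of $Y'$ are $\varepsilon$-klt, so by \autoref{l-Cartier-index} the Cartier index of $Y'$ is bounded in terms of $\varepsilon$. A classification-style analysis of $\varepsilon$-klt del Pezzo surfaces of Picard rank one (in the model case $Y'_{\overline{k}} \cong \mathbb{P}(1,1,d)$, $\varepsilon$-kltness forces $d \leq 2/\varepsilon$ and $K^2 = (d+2)^2/d$) then produces the required bound of the form $\max\bigl(9, 8+20(1-\varepsilon)^2/\varepsilon\bigr)$.

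If $\dim B' = 1$, applying \autoref{lem: bound-self-intersection} to $\phi$ yields $K_{Y'}^2 \leq 8$ and an extremal curve $\Gamma$ such that $\overline{NE}(Y') = \mathbb{R}_+[F] + \mathbb{R}_+[\Gamma]$, with $\Gamma^2$ and $K_{Y'} \cdot \Gamma$ given explicitly in terms of $d_\Gamma, m_\Gamma$ and $n$. Decomposing $-(K_{Y'}+B_{Y'}) = \alpha F + \beta \Gamma$ in $N^1(Y')_{\mathbb{Q}}$ (with $\alpha, \beta > 0$ forced by bigness), one controls $B_{Y'} \cdot F$ and $B_{Y'} \cdot \Gamma$ using the $\varepsilon$-klt bound $a_i < 1 - \varepsilon$ on the coefficients of $B_{Y'}$, together with the two nefness conditions $-(K_{Y'}+B_{Y'}) \cdot F \geq 0$ and $-(K_{Y'}+B_{Y'}) \cdot \Gamma \geq 0$. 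An explicit quadratic optimisation in $(\alpha, \beta)$, incorporating the intersection numbers from \autoref{lem: bound-self-intersection}, then yields $(K_{Y'}+B_{Y'})^2 \leq 8 + 20(1-\varepsilon)^2/\varepsilon$.

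The main obstacle is the explicit optimisation producing the constant $20(1-\varepsilon)^2/\varepsilon$ in the case $\dim B' = 1$. One must carefully track the degree factors $d_\Gamma$ and $m_\Gamma$ from \autoref{lem: bound-self-intersection}, which arise from inseparable residue-field extensions over the imperfect base field; the assumption that $X$ is geometrically integral (preserved through the MMP) is essential to ensure these factors do not inflate the bound. The specific constant $20$ is expected to emerge from a Hirzebruch-type boundary case in which $B_{Y'} \cdot \Gamma$ saturates the $\varepsilon$-klt bound.
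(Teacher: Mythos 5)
Your overall strategy (minimal resolution, MMP to a Mori fibre space, intersection bound) matches the paper's, but there is a genuine gap at the MMP step. You run a $(K_Y+B)$-MMP, whose divisorial contractions can produce a \emph{singular} surface $Y'$ (a log-MMP contraction may collapse a curve $C$ with $(K_Y+B)\cdot C<0$ but $C^2\le -2$). Once $Y'$ is singular, both ingredients you lean on become unavailable: \cite[Theorem 1.2]{Tan19} gives $K^2\le 9$ only for geometrically integral \emph{regular} del Pezzo surfaces, and \autoref{lem: bound-self-intersection} is stated and proved only for \emph{regular} surfaces of del Pezzo type. In the Picard rank one branch you then fall back on a ``classification-style analysis of $\varepsilon$-klt del Pezzo surfaces of Picard rank one'', but bounding $K^2$ for such surfaces over an arbitrary (imperfect) base is exactly the content of the theorem you are proving; and the ``model case'' $Y'_{\overline k}\cong\mathbb P(1,1,d)$ covers only the geometrically normal situation, which is not guaranteed. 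So the rank-one branch is circular as written. The paper sidesteps this entirely by running a \emph{$K_Y$-MMP} on the regular surface $Y$: since $Y$ is regular, every step contracts a curve $E$ with $K_Y\cdot E<0$, $E^2<0$, producing again a regular surface, so the endpoint $Z$ is regular and the two external inputs apply directly; the pushforward $\Delta_Z$ of $\sum b_iE_i$ still satisfies $-(K_Z+\Delta_Z)$ big and nef, and $K_X^2\le(K_Z+\Delta_Z)^2$ as you say.

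Two further points. First, in the $\dim B'=1$ case you propose an ``explicit quadratic optimisation in $(\alpha,\beta)$'' after writing $-(K_{Y'}+B_{Y'})=\alpha F+\beta\Gamma$; the paper instead writes $\Delta_Z=\alpha\Gamma+G$ with $G$ not containing $\Gamma$, notes that on a surface of Picard rank two the effective divisor $G$ is automatically nef, and then drops $G$, reducing cleanly to a one-variable bound on $(K_Z+\alpha\Gamma)^2$. Second, the constant $20=4\cdot 5$ in $8+20(1-\varepsilon)^2/\varepsilon$ comes from the bound $d_\Gamma\le m_\Gamma\le 5$ of \cite[Proposition 4.7]{BT22}, together with the bound $n\le 2/\varepsilon$ on $\Gamma^2=-d_\Gamma n$ coming from adjunction and $\varepsilon$-kltness of $(Z,\alpha\Gamma)$; you identify the ``Hirzebruch-type boundary case'' heuristically but never introduce the $m_\Gamma\le 5$ input, without which you cannot produce the advertised constant. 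Switching to the $K_Y$-MMP, dropping the nef part of $\Delta_Z$, and importing $m_\Gamma\le 5$ are the three pieces that complete the argument.
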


\begin{proof}
Let $f\colon Y \to X$ be the minimal resolution and write 
$K_Y+\sum_i b_i E_i = f^*K_X$. 
By the $\varepsilon$-klt hypothesis and minimality of $f$, we have $0 < b_i < 1-\varepsilon.$
We run a $K_Y$-MMP which ends with $\psi \colon Y \to Z$, where $Z$ is a regular projective surface admitting a Mori fibre space structure $\pi \colon Z \to B$.
Since $-(K_Y+\sum_i b_i E_i)$ is big and nef, so is $-(K_Z+\Delta_Z)$, where $\Delta_Z=\psi_* \left( \sum b_i E_i \right)$. 
Moreover, $K_X^2 = (K_Y+\sum_i b_i B_i)^2 \leq (K_Z+\Delta_Z)^2.$

Suppose $\dim(B)=0$. Then $Z$ is a regular del Pezzo surface of Picard rank 1. As $-(K_Z+\Delta_Z)$ is ample, there exists $0 \leq \lambda<1$ such that $\Delta_Z \equiv - \lambda K_Z$. 
Therefore we deduce $(K_Z+\Delta_Z)^2=(1-\lambda)^2K_Z^2 \leq K_Z^2 \leq 9$, where the last inequality follows by \cite[Theorem 1.2]{Tan19}.

Suppose $\dim(B)=1$. Let $\Gamma$ be the extremal curve described in \autoref{lem: bound-self-intersection}.
We write $\Delta_Z=\alpha \Gamma + G$, where $\text{Supp}(G)$ does not contain $\Gamma$. Since the Picard rank of $Z$ is $2$, $G$ is a $\mathbb{Q}$-Cartier nef divisor. As $-(K_Z+\Delta_Z)$ and $-(K_Z+\alpha \Gamma)$ are big and nef classes, their intersection with $G$ is non-positive, and thus we have
$$(K_Z+\Delta_Z)^2 = (K_Z+\alpha \Gamma)^2+ (K_Z+\alpha \Gamma)\cdot G+  (K_Z+\Delta_Z)\cdot G \leq (K_Z+\alpha \Gamma)^2.$$
Therefore, it is sufficient to bound the volume of del Pezzo surface pairs $(Z, \alpha \Gamma)$, where $Z \to B$ is a Mori fibre space onto a curve, $\Gamma$ is the extremal curve of \autoref{lem: bound-self-intersection} and $0 \leq \alpha <1-\varepsilon$.
Note that 
\begin{equation}\label{eq: self-int-log-can}
(K_Z+\alpha \Gamma)^2=K_Z^2+d_\Gamma \alpha \left( 2(n-2)-n\alpha \right) \text{ and } 0 \leq \alpha<1-\varepsilon. 
\end{equation}

\begin{claim}\label{claim: bound_negative_curve}
The self-intersection of $\Gamma$ is bounded:
$$n \leq \frac{2}{\varepsilon}.$$
\end{claim}

\begin{proof}[Proof of Claim]\renewcommand{\qedsymbol}{$\blacksquare$}
By adjunction,
$$-2d_\Gamma=(K_Z+\Gamma) \cdot \Gamma= (K_Z+\alpha \Gamma) \cdot \Gamma +(1-\varepsilon -\alpha) \Gamma^2 +\varepsilon \Gamma^2.$$
As $-(K_Z+\alpha \Gamma)$ is big and nef and $1-\varepsilon> \alpha$, we have $(K_Z+\alpha \Gamma)\cdot\Gamma+(1-\varepsilon -\alpha) \Gamma^2 \leq 0$ and therefore we deduce $2 d_\Gamma \geq  \varepsilon d_\Gamma \cdot n$.
\end{proof}

If $K_Z \cdot \Gamma \leq 0$, then $(K_Z+\alpha \Gamma)^2 \leq K_Z^2 \leq 8$ by \autoref{lem: bound-self-intersection}, and we are done. So, assume $K_Z \cdot \Gamma >0$, or, equivalently, $n>2$. 
In this case, we have $d_\Gamma \leq m_\Gamma \leq 5$ by \cite[Proposition 4.7]{BT22}. 
Therefore by Equation \eqref{eq: self-int-log-can} and \autoref{claim: bound_negative_curve} we deduce the following series of inequalities:
\begin{equation*} \label{eq1}
\begin{split}
 (K_Z+\alpha \Gamma)^2 & = K_Z^2+d_\Gamma \alpha \left( 2(n-2)-n\alpha \right) \\
& \leq K_Z^2+ 5 \alpha \left( 2n-4\right)  \leq   K_Z^2+ 5(1-\varepsilon)\left(\frac{4}{\varepsilon}-4\right) \\ 
& \leq 8+20 \frac{(1-\varepsilon)^2}{\varepsilon}. \qedhere
	\end{split} 
\end{equation*} 
\end{proof}

As a consequence, we can show a boundedness result for klt del Pezzo surfaces of bounded Gorenstein index in characteristic $p>5$ (cf. \cite[Corollary 1.8]{HMX14} for the analogue in characteristic 0). In the following, we say that a klt del Pezzo surface is \emph{tame} if $h^1(X,\mathcal{O}_X) = 0$.

\begin{corollary}\label{cor: bounded-Gorenstein-index-dP}
Let $n>0$ be an integer.
Then, the classes
\begin{eqnarray*}
\mathcal{X}_{\textup{dP}, n}^{\textup{tame}} &=& \left\{ X \mid X \emph{ is a geometrically integral tame klt del Pezzo surface } \emph{s.t. } nK_X \emph{ is Cartier} \right\}, \text{ and} \\
\mathcal{X}_{\textup{dP}, n}^{>5} &=& \left\{ X \mid X \emph{ is a klt del Pezzo surface } \emph{s.t. } nK_X \emph{ is Cartier} \emph{ and }\emph{char}(H^0(X, \mathcal{O}_X)) \neq 2,3, 5 \right\}
\end{eqnarray*}
are bounded.
\end{corollary}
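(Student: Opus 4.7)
The plan is to mimic the proof of \autoref{t-bab-can-dP}: reduce to a bounded Hilbert scheme problem via a volume bound coming from \autoref{t-bounds-volume} together with effective very ampleness of a bounded anti-pluricanonical power.

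First, the assumption that $nK_X$ is Cartier forces every discrepancy of $X$ to lie in $\tfrac{1}{n}\mathbb{Z} \cap (-1, \infty)$, so $X$ is automatically $\tfrac{1}{n}$-klt. For the class $\mathcal{X}_{\textup{dP}, n}^{>5}$ we additionally need geometric integrality; this holds for klt del Pezzo surfaces in characteristic $p \notin \{2,3,5\}$ by the geometric normality results of \cite{BT22, PW}. In the tame class, geometric integrality is built into the definition. Hence \autoref{t-bounds-volume}, applied with $\varepsilon = \tfrac{1}{n}$, yields a uniform bound $K_X^2 \leq C(n)$ depending only on $n$. Since $(nK_X)^2$ is a positive integer, $K_X^2$ takes only finitely many values. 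Combining this with $h^2(X, \mathcal{O}_X) = 0$ (Serre duality and ampleness of $-K_X$) and $h^1(X, \mathcal{O}_X) = 0$ (by assumption in the tame case, and by \cite{BT22} in the $p > 5$ case), we obtain $\chi(X, \mathcal{O}_X) = 1$ for every $X$ in either class.

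Next, one produces an integer $m = m(n)$ such that $\omega_X^{-mn}$ is very ample for every $X$ in the class and $h^0(X, \omega_X^{-mn})$ is bounded in terms of $n$ alone. The strategy combines: the vanishing $h^1(X, \mathcal{O}_X) = 0$ together with Serre duality applied to a resolution, an effective base-point-free theorem for ample Cartier divisors on klt surfaces such as \cite[Theorem 4.4]{Tan18b}, and a Matsusaka--Kollár type estimate exploiting the volume bound. Once such an $m$ is fixed, the pluri-anticanonical embedding sends $X$ into $\mathbb{P}^N_k$ with $N$ depending only on $n$, and by Riemann--Roch the Hilbert polynomial $\ell \mapsto \chi(X, \omega_X^{-mn\ell})$ is determined by $K_X^2$ and $\chi(X, \mathcal{O}_X)$, both of which take finitely many values. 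Therefore every $X$ arises as a fibre of the universal family over a finite union of Hilbert schemes of finite type over $\Spec \mathbb{Z}$ \cite[Theorem 1.4]{Kol96}, yielding the desired boundedness.

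The main obstacle is the effective very ampleness step. In characteristic zero this follows from a standard Matsusaka--Kollár argument using Kawamata--Viehweg vanishing; in positive characteristic Kawamata--Viehweg may fail for klt surfaces, and one must carefully exploit either the tameness hypothesis or the restriction $p > 5$ to secure enough vanishing to run a Castelnuovo--Mumford regularity argument on $X$ (or on its minimal resolution, pushed forward via rationality of klt surface singularities from \autoref{p-klt-rational}). All remaining steps are routine consequences of \autoref{t-bounds-volume} and the Hilbert scheme formalism.
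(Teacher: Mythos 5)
Your proposal follows the same skeleton as the paper's proof (volume bound from \autoref{t-bounds-volume}, finiteness of $K_X^2$ from the Cartier-index constraint, $\chi(\mathcal{O}_X)=1$ from tameness and Riemann--Roch via rational singularities, reduction of the $p>5$ case to the tame case via the geometric-integrality and tameness results of \cite{BT22}, and Hilbert schemes at the end), but you leave the central step --- the production of a uniform $m=m(n)$ with $\omega_X^{-mn}$ very ample --- as an acknowledged ``main obstacle'' rather than actually closing it. Your suggested fix (a Castelnuovo--Mumford regularity argument on $X$ or its minimal resolution using Tanaka's base-point-free theorem and ``a Matsusaka--Kollár type estimate'') is not carried out and is genuinely problematic in positive characteristic, precisely for the reason you flag: Kawamata--Viehweg vanishing is unavailable for klt surfaces in small characteristic, and the effective statements proved in \autoref{s-boundedness} of this paper are for \emph{canonical} (Gorenstein) del Pezzo surfaces, not for the klt case needed here.

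The paper sidesteps this entirely by reversing the logic. Having fixed the Hilbert polynomials $P_n(t)=\chi(X,\mathcal{O}_X(-ntK_X))$ to a finite set, it applies Kollár's boundedness theorem \cite[Theorem 2.1.2]{Kol85} to the \emph{geometric} fibres $X_{\overline{K}}$: these form a bounded family, and a uniform very ample multiple $-mnK_{X_{\overline{K}}}$ exists simply as a consequence of being in a bounded family, not as something one must prove via a vanishing/regularity argument on $X$ itself. Very ampleness then descends from $X_{\overline{K}}$ to $X$ by faithfully flat descent, and the Hilbert-scheme argument finishes. The key idea you are missing is this descent of effective very ampleness from the algebraically closed (hence classically bounded) situation, which replaces the Matsusaka-type estimate you cannot prove. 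As it stands your proof has a genuine gap at that step.
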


\begin{proof}
By \cite[Corollary 5.5]{BT22} and \cite[Theorem 5.7]{BT22}, klt del Pezzo surfaces in characteristic bigger than $5$ are geometrically integral and tame, so it suffices to show that $\mathcal{X}_{\textup{dP}, n}^{\textup{tame}}$ is bounded.

So, let $X \in \mathcal{X}_{\textup{dP}, n}^{\textup{tame}}$.
As $X$ is $\left(\frac{1}{n} \right)$-klt, \autoref{t-bounds-volume} implies that $K_X^2$ is bounded.
As the Cartier index of $K_X$ is fixed, the set of volumes $\left\{K_X^2 \mid X \in \mathcal{X}_{\textup{dP}, n}^{\textup{tame}}  \right\}$ is a finite set.
As $X$ has rational singularities by \autoref{p-klt-rational}, we can apply Riemann--Roch to compute for all $t \geq 1$:
$$\chi(X, \mathcal{O}_X(-ntK_X))=\chi(X, \mathcal{O}_X)+\frac{nt(nt+1)K_X^2}{2}.$$
As $X$ is tame, $\chi(X, \mathcal{O}_X)=1$ and therefore there are only a finite number of possibilities for the Hilbert polynomials $P_n(t):=\chi(X, \mathcal{O}_X(-ntK_X))$.
Finally we apply \cite[Theorem 2.1.2]{Kol85} to conclude that $X_{\overline{K}}$ form a bounded family over $\Spec(\mathbb{Z}[1/30])$. 
In particular, there exists $m:=m(n)$ such that $-mnK_{X_{\overline{K}}}$ is very ample, and thus, by faithfully flat descent, $-mnK_{X}$ is very ample.
Therefore, there exists $N=N(n)>0$ such $-mnK_{X}$ embeds $X$ into some $\mathbb{P}^N$ with a finite number of possibilities for the Hilbert polynomial. This concludes that $X$ belongs to a bounded family by a classical Hilbert scheme argument. 
\end{proof}

\subsection{Bounds on the $\mathbb{Q}$-Gorenstein index of $\varepsilon$-klt del Pezzo surfaces} \label{ss-bound-Qindex-eps}

After \autoref{cor: bounded-Gorenstein-index-dP}, to conclude the proof of boundedness of $\varepsilon$-klt del Pezzo surfaces we are only left to prove a bound on the Cartier index of $K_X$ depending only on $\varepsilon$.
We start with the following result, which is well-known over perfect fields.

\begin{lemma} \label{lem: bounds_Gorenstein_index}
    Fix $ \varepsilon \in \mathbb{Q}_{>0}$ and $ n \in \mathbb{Z}_{>0}$.
Then, there exists $N = N(\varepsilon,n)$ such that for every $\varepsilon$-klt surface $X$ admitting a minimal resolution $f \colon Y \to X$ with $\rho(Y) \leq n$, the divisor $NK_X$ is Cartier. 
\end{lemma}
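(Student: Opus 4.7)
The plan is to reduce to bounding the discriminant of the exceptional intersection matrix at each singularity of $X$ via \autoref{l-Cartier-index}. For each singular point $x \in X$, that lemma asserts that the Cartier index of $K_X$ at $x$ divides $|\det M|/[k(x):k]$, where $M = (E_i \cdot_k E_j)$ is the intersection matrix of the exceptional curves of $f$ above $x$. Since $\rho(Y) \le n$ bounds the total number of exceptional curves of $f$, and hence both the number of singularities of $X$ and the size of each matrix $M$, it is enough to bound $|\det M|$ at each singularity by a quantity depending only on $\varepsilon$ and $n$.

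I would then bound the entries of $M$ by a standard discrepancy--adjunction computation. Writing $K_Y + \sum_i b_i E_i = f^* K_X$ with $b_i \in [0, 1-\varepsilon)$ from the $\varepsilon$-klt hypothesis, and recalling that klt surface singularities are rational by \autoref{p-klt-rational}, one has $h^1(E_i, \mathcal{O}_{E_i}) = 0$, so adjunction yields $(K_Y + E_i) \cdot_k E_i = -2[H^0(E_i, \mathcal{O}_{E_i}):k]$. Combining this with $(K_Y + \sum_j b_j E_j) \cdot_k E_i = 0$ produces the identity
\[
(1 - b_i)(-E_i^2) + \sum_{j \ne i} b_j (E_i \cdot_k E_j) = 2\,[H^0(E_i, \mathcal{O}_{E_i}):k].
\]
Each summand on the left is non-negative and $1 - b_i > \varepsilon$, so both $-E_i^2$ and the off-diagonal numbers $E_i \cdot_k E_j$ are controlled by $\varepsilon^{-1}[H^0(E_i, \mathcal{O}_{E_i}):k]$. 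Feeding these into the Leibniz expansion of $\det M$ (which has at most $n!$ terms) then yields a bound on $|\det M|$ in terms of $\varepsilon$, $n$, and the field degrees $[H^0(E_i, \mathcal{O}_{E_i}):k]$.

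The main obstacle is to bound these residue field degrees uniformly in $\varepsilon$ and $n$. Over perfect fields this is standard: the exceptional curves of the minimal resolution of a klt surface singularity are geometrically $\mathbb{P}^1$, so $[H^0(E_i, \mathcal{O}_{E_i}):k] = [k(x):k]$, and the latter factor is absorbed by the quotient in \autoref{l-Cartier-index}. Over imperfect fields the exceptional curves can acquire large purely inseparable residue extensions, and this is where the bulk of the work lies: I would pass to the perfect closure $k^{1/p^\infty}$ and compare the resolutions of $X$ and of the normalised base change via the Tate-type formula from the preliminaries, which controls how discrepancies degenerate under inseparable base change and allows the perfect-field bound to be transferred back to $k$ at the cost of a controlled weakening of $\varepsilon$. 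With this reduction, the required bound on $|\det M|$ follows, and \autoref{l-Cartier-index} delivers the claimed $N = N(\varepsilon, n)$.
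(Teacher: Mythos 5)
Your strategy differs from the paper's and runs into a genuine obstruction at exactly the point you flag as ``the main obstacle.'' The paper does not try to bound the intersection matrix $M = (E_i \cdot_k E_j)$ or its determinant, and never needs to control the residue field degrees $d_{E_i} := [H^0(E_i,\mathcal{O}_{E_i}):k]$. Instead it observes that the adjunction system determining the log discrepancies $b_i$ becomes purely combinatorial after dividing the $j$-th equation by $d_{E_j}$: writing $E_j^2 = -d_{E_j}n_j$ and $E_i \cdot E_j = d_{E_j}n_{ij}$, the relations $(K_Y + \sum_i b_i E_i)\cdot E_j = 0$ become $2 - n_j + b_j n_j = \sum_{i\neq j} b_i n_{ij}$, a linear system whose coefficients are the \emph{integers} $n_j$, $n_{ij}$ alone. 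The $\varepsilon$-klt condition bounds $n_j \le 2/\varepsilon$, the $n_{ij}$ are bounded by the classification of klt dual graphs, and with $\rho(Y)\le n$ curves this leaves only finitely many possible linear systems, hence finitely many possible tuples $(b_i)$ and a common denominator $N(\varepsilon,n)$. The base point free theorem then descends $NK_X$ to a Cartier divisor.

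Your route, by contrast, needs a uniform bound on the $d_{E_i}$'s, because the raw entries $E_i\cdot_k E_j$ scale with them, and over imperfect fields these degrees are not controlled by $\varepsilon$ and $\rho(Y)$ alone (only the normalized number $n_i = -E_i^2/d_{E_i}$ is; the self-intersection $E_i^2$ itself can be arbitrarily negative). The proposed fix via the perfect closure and Tate's formula is not worked out and faces a concrete difficulty: Tate's theorem introduces an effective divisor $(p-1)C$ on the normalized base change, which can make the resulting singularity fail to be $\varepsilon$-klt, or even klt, so the perfect-field bound does not transfer back by ``a controlled weakening of $\varepsilon$.'' There is also a secondary issue already over perfect but non-closed fields: the claim $d_{E_i} = [k(x):k]$ is not correct in general (exceptional curves may be defined only over a strictly larger separable extension), and even if it were, \autoref{l-Cartier-index} only divides $|\det M|$ by a single factor of $[k(x):k]$, leaving $[k(x):k]^{n-1}$ uncontrolled. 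In short, the determinant of the unnormalized intersection matrix is the wrong invariant; the paper's normalization step is precisely what removes the dependence on residue field degrees, and without an analogue of it your argument does not close.
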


\begin{proof}
    
    Without loss of generality, we assume that $X$ is the spectrum of a local ring and that $\rho(Y) = n$.
    Let $E= \sum_{i=1}^n E_i$ be the sum of the exceptional divisors of $f$ and write $K_Y + \sum_{i=1}^n b_i E_i =f^*K_X$ for some $0 \leq b_i< 1-\varepsilon$. By the base point free theorem \cite[Theorem 4.2]{Tan18b}, it suffices to find an effective $N=N(\varepsilon,n)$ such that $Nb_i$ is integral.
    For each $i$, we write $E_i^2 = -d_{E_i} n_i$ for some integer $n_i>0$, where $d_{E_i} = [H^0(E_i,\mathcal{O}_{E_i}):k]$. Moreover, for each $i \neq j$ we write $E_i \cdot E_j = d_{E_j} n_{ij}$ for some $n_{ij}>0$.
    The $b_i$ are determined by the following system of equations:
    \begin{equation} \label{eq: coefficients}
         (2 - n_j + b_jn_j) = \sum_{i\neq j} b_i n_{ij} \text{ for } j=1, \dots, n.
    \end{equation}
    Since $X$ is $\varepsilon$-klt, we have
    \begin{equation}\label{eq: bound_self_inters}
        n_j \leq \frac{2}{\varepsilon}.
    \end{equation} 
Indeed,
\begin{equation*} \label{eq:0}
\begin{split}
  0 & =(K_Y+\sum b_iE_i) \cdot E_j \geq (K_Y+b_j E_j) \cdot E_j \\
& \geq -2d_{E_j} + (b_j-1) \cdot (-d_{E_j}n_j) \geq -2d_{E_j}+\varepsilon n_jd_{E_j}. \\ 
\end{split}
\end{equation*}

    Moreover, $n_{ij}$ is bounded from above by \cite[Corollary 3.31 and Section 3.41]{kk-singbook} and \cite[Appendix A]{Sa23}. 
    As the $n_i$ and $n_{ij}$ are integers, we only have a finite number of possibilities for the coefficients in \autoref{eq: coefficients}, so we conclude that there are only finitely many possibilities for the solutions $b_i$, thus showing the existence of a $N(\varepsilon, n)$ for which $Nb_i$ is integral.
\end{proof}

\begin{lemma}\label{lem: bounds_sum_coeff}
    Let $X$ be a geometrically integral regular projective surface of del Pezzo type over $k$, and let $\pi \colon X \to B$ be a Mori fibre space.
    Let $\Delta=\sum b_i E_i$ be an effective $\mathbb{Q}$-divisor such that $(X,\Delta)$ is klt and $-(K_X+\Delta)$ is nef.
    Then 
    \begin{enumerate}
        \item if $\dim(B)=0$, then $\sum b_i \leq 3$;        \item if $\dim(B)=1$, then $\sum b_i \leq 4$.
    \end{enumerate}
\end{lemma}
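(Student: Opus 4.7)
I plan to handle the two cases by intersecting $-(K_X+\Delta)$ with appropriate nef test curves and reading off the resulting inequalities.

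\emph{Part (1).} Here $X$ is a regular del Pezzo surface with $\rho(X)=1$. Let $H$ denote the ample generator of $\Pic(X)$ modulo torsion. Writing $-K_X\equiv rH$ with Fano index $r\in\mathbb{Z}_{>0}$, and each integral component $E_i\equiv d_iH$ with $d_i\in\mathbb{Z}_{>0}$, nefness of $-(K_X+\Delta)$ translates into $\sum b_id_i\le r$, so $\sum b_i\le r$. By Tanaka's bound \cite[Theorem 1.2]{Tan19}, $K_X^2\le 9$; combined with $K_X^2=r^2H^2\ge r^2$ (as the intersection number $H^2$ is a positive integer), this yields $r\le 3$.

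\emph{Part (2).} I decompose $\Delta=\Delta_h+\Delta_v$ into horizontal and vertical components with respect to $\pi$. Let $F$ be a general closed fibre and $d_F=[H^0(F,\mathcal{O}_F):k]$, so $-K_X\cdot F=2d_F$ by adjunction. For every horizontal integral $E_i$, the morphism $E_i\to B$ is surjective of some degree $m_i\ge 1$, so $E_i\cdot F=m_id_F\ge d_F$. Intersecting $-(K_X+\Delta)$ with $F$ gives
\[
\sum_{E_i\text{ horizontal}} b_i\;\le\;\frac{\Delta\cdot F}{d_F}\;\le\;2.
\]
For the vertical contribution I plan to use the second extremal ray $\Gamma$ produced by Lemma~\ref{lem: bound-self-intersection}. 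When $\Gamma$ is horizontal, the numerical equivalence $F_b\equiv F$ valid for every closed fibre $b$ lets me reorganize the vertical intersection $\Delta_v\cdot\Gamma$ as a uniform multiple of $F\cdot\Gamma$; combining $-(K_X+\Delta)\cdot\Gamma\ge 0$ with the horizontal bound then gives $\sum_{E_i\text{ vertical}}b_i\le 2$. When $\Gamma$ is itself vertical, it coincides with a fibre component, and I would use a sufficiently general horizontal section as the test curve instead.

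The main obstacle will be making the vertical bound uniform across all degenerate fibres: components of $\Delta_v$ can be distributed among several fibres that $\Gamma$ need not meet, so a naive intersection with $\Gamma$ undercounts. The key technical input should be the numerical triviality of $F_b-F$, which allows one to redistribute vertical mass between fibres without affecting nefness of $-(K_X+\Delta)$.
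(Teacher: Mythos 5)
Your Part (1) argument matches the paper's. For Part (2), your plan — test against a fibre $F$ and against the second extremal curve $\Gamma$ from \autoref{lem: bound-self-intersection}, then add the two resulting inequalities — is the same strategy as the paper. Your horizontal bound $\sum_{E_i\,\text{horizontal}}b_i\le 2$ is correct. However, the way you propose to finish the vertical bound would not go through as written, and the obstacle you flag is not the real one.

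First, $\Gamma$ is \emph{never} vertical: it spans the second extremal ray of $\NE(X)=\mathbb{R}_+[F_b]+\mathbb{R}_+[\Gamma]$ while every vertical curve lies in $\mathbb{R}_+[F_b]$, so $\Gamma$ must dominate $B$ and therefore meets every fibre. Your worry that ``components of $\Delta_v$ can be distributed among several fibres that $\Gamma$ need not meet'' is thus unfounded, and the case ``$\Gamma$ vertical'' is vacuous. Second, your proposed key input — ``numerical triviality of $F_b - F$'' — is false over a non-closed field, since $F_b\cdot(\text{ample})$ scales with $d_b=[k(b):k]$; at best $F_b$ and $F_{b'}$ are $\mathbb{Q}$-proportional with ratio $d_b/d_{b'}$. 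Even after you restore proportionality correctly, the computation $\Delta_v\cdot\Gamma = m_\Gamma\sum b_i d_{b_i}$ only becomes useful once you compare it with an upper bound coming from $(K_X+\Delta)\cdot\Gamma\le 0$ together with the formulas $\Gamma^2=-d_\Gamma n$, $K_X\cdot\Gamma=d_\Gamma(n-2)$ of \autoref{lem: bound-self-intersection}, \emph{and} the inequality $d_\Gamma\le m_\Gamma$ (true because $k_\Gamma\cap k(B)=k$ inside $k(\Gamma)$). The paper short-circuits all of this with the cleaner observation that for any prime divisor $E_i$ meeting $\Gamma$ (vertical or horizontal), each intersection point lies on $\Gamma$ and so has residue field containing $k_\Gamma=H^0(\Gamma,\mathcal O_\Gamma)$; hence $E_i\cdot\Gamma\ge d_\Gamma$. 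Substituting this into $0\ge(K_X+\Delta)\cdot\Gamma$ and using $b_0\le 1$, $n\ge 0$ gives $\sum_{E_i\cdot\Gamma\ne 0}b_i\le 2$ directly. Adding this to the fibre inequality and noting that every prime divisor meets $F_b$ or $\Gamma$ yields $\sum b_i\le 4$. This is the missing technical ingredient your sketch needs; once supplied, either your horizontal/vertical decomposition or the paper's direct summation closes the argument.
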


\begin{proof}
    Suppose $\dim(B)=0$. Let $H$ be an ample Cartier divisor generating $\Num(X)$, and let $d \geq 0$ such that $-K_X \equiv dH$. 
    As $X$ is a regular del Pezzo surface, we have $K_X^2 \leq 9$ by \cite[Theorem 1.2]{Tan19} and therefore $d \leq 3$. Since $X$ is regular, the $B_i$ are Cartier divisors and thus $\sum b_i \leq 3$.

    Suppose $\dim(B)=1$. Let $F_b$ be a closed fibre of $\pi$ and $\Gamma$ the curve given by \autoref{lem: bound-self-intersection}.
    Set $d_b \coloneqq [k(b):k]$.
    We write $\Delta=b_0\Gamma + \sum b_iE_i$ as a sum of pairwise distinct prime divisors.
    As $0 \geq (K_X+\Delta) \cdot F_b$, adjunction implies 
    \begin{equation}
        2d_b \geq \Delta \cdot F_b \geq b_0d_b+\sum_{(E_i \cdot F_b) \neq 0} b_i d_b.
    \end{equation}
    As $K_X \cdot \Gamma=d_\Gamma(n-2)$, $0 \geq (K_X+\Delta) \cdot \Gamma$, and $b_0 \leq 1$, adjunction implies
    \begin{equation}
        2d_\Gamma \geq n  d_\Gamma -b_0 n d_\Gamma +\sum_i  b_i (E_i \cdot \Gamma) \geq \sum_{(E_i\cdot \Gamma ) \neq 0} b_i d_\Gamma.
    \end{equation}
    Summing the two equations and using that every curve on $X$ intersects either $\Gamma$ or $F_b$, we obtain
    $
    4 \geq \sum b_i,
    $
    as desired.
\end{proof}

\begin{proposition} \label{prop: bounds_Picard}
Let $\varepsilon >0$. Then, there exists a constant $D(\varepsilon)$ such that for all geometrically integral $\varepsilon$-klt del Pezzo surfaces $X$, 
the minimal resolution $f \colon Y \to X$ satisfies $\rho(Y) \leq D(\varepsilon).$

\end{proposition}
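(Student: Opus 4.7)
Plan. I will bound $\rho(Y)$ for the minimal resolution $f\colon Y \to X$ by running a $K_Y$-MMP and controlling the number of contractions in terms of $\varepsilon$. Write $K_Y + \Delta_Y = f^*K_X$ with $\Delta_Y = \sum_i b_i E_i$ and $0 \leq b_i < 1 - \varepsilon$, as in the proof of \autoref{t-bounds-volume}. Then $(Y, \Delta_Y)$ is $\varepsilon$-klt and $-(K_Y + \Delta_Y) = -f^*K_X$ is nef and big, and $Y$ is regular.

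Following the proof of \autoref{t-bounds-volume}, a $K_Y$-MMP produces a morphism $\psi\colon Y \to Z$ to a regular projective surface $Z$ with either $\rho(Z)=1$ (a regular del Pezzo) or $\rho(Z)=2$ (a Mori fiber space over a curve). Hence
$$
\rho(Y) = \rho(Z) + N \leq 2 + N,
$$
where $N$ is the number of divisorial contractions in the MMP, and the proof reduces to bounding $N$ by a function of $\varepsilon$. Applying \autoref{lem: bounds_sum_coeff} to $(Z, \Delta_Z := \psi_*\Delta_Y)$ yields $\sum_j c_j \leq 4$, where $c_j$ are the coefficients of $\Delta_Z$.

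Each $\varepsilon$-klt surface singularity admits a minimal resolution with at most $C_1(\varepsilon)$ exceptional components: this follows from the classification of klt surface singularities and the explicit upper bounds on the integers $n_i$ and $n_{ij}$ appearing in the proof of \autoref{lem: bounds_Gorenstein_index} (combined with \cite[Corollary 3.31]{kk-singbook} and \autoref{eq: bound_self_inters}). Thus the total number of exceptional prime divisors $E_i$ is bounded by $C_1(\varepsilon)\cdot|\mathrm{Sing}(X)|$. Now split the $N$ divisorial contractions into those that contract a component of the current boundary (bounded by $\#\{E_i\}$) and those that do not. The remaining task is to bound simultaneously $|\mathrm{Sing}(X)|$ and the number of non-boundary contractions; for this I would use the volume bound $K_X^2 \leq V(\varepsilon)$ from \autoref{t-bounds-volume} together with an accounting argument along the MMP, tracking how the volumes $(K_{Y_j} + \Delta_{Y_j})^2$ and the partial sums of coefficients evolve, following the strategy of \cite{AM04}.

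The hard part will be making this combinatorial MMP argument rigorous in the imperfect-field setting. Over an algebraically closed field of characteristic zero, the heart of \cite{AM04} exploits Noether's formula $\rho + K^2 = 10$ for smooth rational surfaces, which is not directly available here since $Y$ need not be geometrically smooth or geometrically rational. A natural workaround is to pass to a smooth model $W\to (Y_{\bar k})^{\mathrm{norm}}_{\mathrm{red}}$, use the inequality $\rho(Y) \leq \rho(W)$, and verify that the pair obtained from $(X,\Delta)$ after base change and normalization remains $\varepsilon'$-klt for some $\varepsilon'=\varepsilon'(\varepsilon)>0$ (using Tate's base change formula together with the bound $\ell_F(X/k)\leq 1$ from \autoref{c-length-frob} in the geometrically integral Gorenstein case, and the analogous control in the general $\varepsilon$-klt case), so that the Alexeev--Mori bound can be applied to $W$ and descended to $Y$.
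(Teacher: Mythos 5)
Your approach diverges from the paper's at the crucial step and the proposed workaround contains a gap that cannot be repaired. The paper runs the Alexeev--Mori strategy \emph{directly over} $k$ (assumed separably closed) and never passes to $\bar k$: it chooses an effective $H \sim_{\mathbb{Q}} -K_X$ with $(X,H)$ $\varepsilon$-klt, sets $\Gamma_Y := \sum b_i E_i + f_*^{-1}H$ so that $K_Y + \Gamma_Y \sim_{\mathbb{Q}} 0$, runs a $K_Y$-MMP to a Mori fibre space $Z$ of Picard rank $\leq 2$, factors $g\colon Y \to Z$ as $Y \xrightarrow{\psi} W \xrightarrow{\varphi} Z$ (blow-ups at points where the strict transform of $\Gamma_Z$ has multiplicity $\geq \nu := \varepsilon/2$ go into $\varphi$, the rest into $\psi$), bounds $\rho(W/Z)$ by a self-intersection-drop argument built on \autoref{lem: bounds_sum_coeff} and \autoref{lem: bound-self-intersection}, and bounds $\rho(Y/W)$ by showing each low-multiplicity blow-up must be centered at a node of the $\varphi$-exceptional divisor. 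The whole point, which the paper emphasises, is that this computation uses only intersection theory, the cone theorem, and the coefficient bound of \autoref{lem: bounds_sum_coeff}, none of which require $k = \bar k$.

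Your workaround — pass to a smooth model $W$ of $(Y_{\bar k})^{\mathrm{norm}}_{\mathrm{red}}$ and apply Alexeev--Mori there — breaks at the step ``verify that the pair remains $\varepsilon'$-klt.'' No such $\varepsilon' = \varepsilon'(\varepsilon) > 0$ exists: $\varepsilon$-klt is not preserved under purely inseparable base change. Tate's formula $K_Y + (p-1)C = f^*K_X$ produces a boundary $(p-1)C$ whose coefficients can be $\geq 1$; indeed by \autoref{t-recap} the ramification divisor of the normalisation of $X_{\bar k}$ can be a non-reduced divisor such as $2L$, so the induced pair on the normalised base change is not even log canonical. Moreover, \autoref{c-length-frob} controls $\ell_F(X/k)$ only in the geometrically integral \emph{Gorenstein} case and says nothing about discrepancies after base change; the ``analogous control in the general $\varepsilon$-klt case'' that you invoke is precisely what is unavailable (see the remark after \autoref{thm: BAB_imperfect}), which is why the paper avoids base change altogether. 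Finally, even granting the reduction, the actual content of the proposition — the accounting argument along the MMP — is left unexecuted in your proposal, deferred to ``the strategy of Alexeev--Mori''; the paper's proof consists of carrying this out over $k$.
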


\begin{proof}
We can suppose $k$ is separably closed.
We follow the computations of \cite[Theorem 1.8]{AM04}, verifying that the explicit classification of rational Mori fibre spaces over algebraically closed fields is not needed.
Without loss of generality we can suppose $\varepsilon < \frac{2}{3}$.
Since $-K_X$ is ample, by Bertini's theorem we can choose $H \sim_{\mathbb{Q}} -K_X$ to be an effective $\mathbb{Q}$-divisor whose support is regular and contained in the regular locus of $X$ such that $(X,H)$ is $\varepsilon$-klt.
Let $f \colon Y \to X$ be the minimal resolution and write $K_Y+\Gamma_Y = f^*(K_X+H) \sim_{\mathbb{Q}} 0$ , where $\Gamma_Y \coloneqq \sum_{ i \in I} b_i E_i +f_*^{-1}H$ and $b_i < 1- \varepsilon$ by hypothesis. 
We run a $K_Y$-MMP which ends with 
$g \colon Y \to Z$, where $Z$ 
is a regular projective surface admitting a Mori fibre space structure $\pi \colon Z \to B$. We summarise the situation in the following diagram:

$$
\xymatrix{
 Y \ar[r]^{g} \ar^{f}[d] & Z \ar[d]^{\pi}   \\
 X & B
}
$$

We fix the following notation:
\begin{itemize}
\item $E_{i,Z} \coloneqq g_* E_i$.
\item $I_Z \coloneqq \{i \in I \mid E_{i,Z} \neq 0 \}$.
\item $\Delta_Y \coloneqq \sum_{i \in I} b_i E_i.$
\item $\Delta_Z \coloneqq \sum_{i \in I_Z} b_i E_{i,Z}$.
\item $\Gamma_Z \coloneqq g_*\Gamma_Y$
\end{itemize}

Note that, by construction, $g$ is a $(K_Y+\Delta_Y)$-non-positive (resp. $(K_Y+\Gamma_Y)$-trivial) birational contraction and $(Z, \Delta_Z)$ (resp. $(Z, \Gamma_Z)$) is a log del Pezzo pair (resp. Calabi--Yau pair) with $\varepsilon$-klt singularities.

The morphism $g$ is a composition of blow-ups of closed points on regular surfaces by \cite[\href{https://stacks.math.columbia.edu/tag/0C5R}{Tag 0C5R}]{stacks-project}. 
We can decompose $g$ as 
$$g \colon Y \xrightarrow[]{\psi} W  \xrightarrow[]{\varphi} Z ,$$ 
where
$\psi$ and $\varphi$ are proper birational morphisms between regular surfaces such that
\begin{enumerate}
    \item $\varphi$ is a composition of blow-ups at closed points $P$ such that $\text{mult}_P(\widetilde{\Gamma_Z})$ has multiplicity at least $\nu:=\frac{\varepsilon}{2}$, where $\widetilde{\Gamma_Z}$ denotes the strict transform of $\Gamma_Z$;
    \item $\psi$ is a composition of blow-ups at closed points $P$ where $\text{mult}_P(\widetilde{\Gamma_Z})$ has multiplicity $< \nu$.
\end{enumerate}

We first bound $\rho(W/Z)$ in terms of $\varepsilon$.
On $Y$, as $f_*^{-1}H$ is big and nef, we can apply Equation \eqref{eq: bound_self_inters} to obtain
\begin{equation}
(g_*^{-1}\Gamma_Z)^2 \geq \sum_{i \in I} b_i^2 (g_*^{-1} E_{i,Z})^2 \geq \sum_{i \in I_Z} b_{i} (1-\varepsilon)\left(\frac{-2}{\varepsilon}\right).
\end{equation}
If $\dim(B)=0$ (resp. $1$), we have $\sum_{i \in I_Z} b_i \leq 4$ (resp. $\leq 3$) by \autoref{lem: bounds_sum_coeff}, and thus
\begin{equation}\label{eq:ineq_selfint}
(g_{*}^{-1}\Gamma_Z)^2 \geq \begin{cases}
6-\frac{6}{\varepsilon} \text{ if } \dim(B)=0 \\
8-\frac{8}{\varepsilon} \text{ if } \dim(B)=1 .
\end{cases}
\end{equation}
After each of the blow-ups in $\varphi$, the self-intersection of $\widetilde{\Gamma_Z}$ decreases by at least $\nu^2$, we deduce that $(g_*^{-1}\Gamma_Z)^2 \leq \Gamma_Z^2-\nu^2 \rho(W/Z)$.
If $\dim(B)=0$ (resp. $1$), then $\Gamma_Z^2=K_Z^2 \leq 9$ (resp. $8$) by \cite[Theorem 1.2]{Tan19} (resp. \autoref{lem: bound-self-intersection}).
Therefore 
$ (g_*^{-1}\Gamma_Z)^2 \leq 9-\nu^2 \rho(W/Z)$ (resp. $8-\nu^2 \rho(W/Z)$).
Together with \eqref{eq:ineq_selfint} we conclude
 \begin{equation}\label{eq: first_bound_picard}
\rho(W/Z) \leq 
\begin{cases}
 \frac{(3\varepsilon + 6)}{\varepsilon \nu^2} \text{if } \dim(B)=0; \\
 \frac{8}{\varepsilon \nu^2} \text{ if } \dim(B)=1 .
\end{cases}
\end{equation}
As we chose $\varepsilon< \frac{2}{3}$ we have $\frac{(3\varepsilon + 6)}{\varepsilon \nu^2}< \frac{8}{\varepsilon \nu^2} $.

We now prove a bound on $\rho(Y/W)$ depending only on $\varepsilon$.  
Let $F=\sum_{i \in J} F_i$ be the sum of the exceptional divisors of $\varphi$ and let $f_i:=\text{coeff}_{F_i}(\Gamma_W)$, where $\Gamma_W \coloneqq \psi_* \Gamma_Y$. 
As $W$ and $Z$ are regular surfaces, $\text{Supp}(F)$ is an snc divisor.
Let $\psi \colon Y \xrightarrow{s} T \xrightarrow{t} W$ be a factorisation of $\psi$, where $t$ is a blow-up at a point $P$ of $W$ with exceptional divisor $C$.
Write $$K_T+\Gamma_T=K_T+\widetilde{\Gamma_Z} + \sum f_i\widetilde{F_i} +cC \sim_{\mathbb{Q}} t^*(K_W+\widetilde{\Gamma}_Z+\sum f_{i}F_{i}).$$
We claim that $P$ must lie on the intersection of two components $F_1$ and $F_2$ of $F$. 
For this, let $J_F \coloneqq \{i \in J \mid P \in F_i\}$.
% Suppose by contradiction that this is not the case.
Then, since $\text{mult}_P(\widetilde{\Gamma}_Z) \leq \nu < \varepsilon$ and $f_i < 1-\varepsilon$, we have that
$$0 < c= (\text{mult}_P(\widetilde{\Gamma}_Z)+\sum_{i \in J_F} f_i-1) \leq \nu + |J_F|(1-\varepsilon)-1,$$
hence $|J_F| \geq 2$.
Since ${\rm Supp}(F)$ is an snc divisor, this implies $|J_F| = 2$, as desired.
This argument can be repeated for each blow-up $T \to W$ factorising $Y \to W$.

As the number of nodes of $F$ is bounded by $\rho(W/Z)-1$, it remains to bound the number of times we are allowed to blow-up along a node to obtain a bound on $\rho(Y/W)$.
This follows from a straightforward induction as in \cite[Lemma 1.9]{AM04}. An explicit computation shows that
$$ \rho(Y)=\rho(Y/W)+\rho(W/Z)+\rho(Z) \leq \left(\frac{1}{(\varepsilon-\nu)^2}-1 \right) \cdot \left( \frac{8}{\varepsilon \nu^2} -1 \right) + \left( \frac{8}{\varepsilon \nu^2} \right) +2.$$
As $\nu=\frac{\varepsilon}{2}$, we deduce that
\begin{equation*}
\rho(Y) \leq \frac{128}{\varepsilon^5}+\left(3-\frac{4}{\varepsilon^2}\right) \leq \frac{128}{\varepsilon^5}. \qedhere
\end{equation*}
\end{proof}

We now have all the ingredients to prove the BAB conjecture in dimension 2 and characteristic $p \neq 2, 3 $ and $5$.

\begin{theorem} \label{thm: BAB_imperfect}
Let $\varepsilon>0$ be a rational number.
Then, the classes
\begin{eqnarray*}
\mathcal{X}_{\textup{dP}, \varepsilon}^{\textup{tame}} &=& \left\{ X \mid X \emph{ is a geometrically integral tame } \varepsilon\emph{-klt del Pezzo surface } \right\}, \text{ and} \\
\mathcal{X}_{\textup{dP}, \varepsilon}^{>5} &=&  \left\{ X \mid X \emph{ is an } \varepsilon\emph{-klt del Pezzo surface s.t. char}(H^0(X, \mathcal{O}_X)) \neq 2, 3, 5 \right\}
\end{eqnarray*}
are bounded.
\end{theorem}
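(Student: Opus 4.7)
The plan is to chain together the three main results of this subsection: the bound on the Picard rank of the minimal resolution, the bound on the Gorenstein index given such a Picard rank bound, and the boundedness of surfaces with fixed Gorenstein index.

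First, I would reduce the characteristic $>5$ case to the tame geometrically integral case. By \cite[Corollary 5.5]{BT22} and \cite[Theorem 5.7]{BT22}, every klt del Pezzo surface over a field of characteristic distinct from $2,3,5$ is automatically geometrically integral and tame, so $\mathcal{X}_{\textup{dP}, \varepsilon}^{>5} \subseteq \mathcal{X}_{\textup{dP}, \varepsilon}^{\textup{tame}}$ and it suffices to prove boundedness of the latter.

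Next, given $X \in \mathcal{X}_{\textup{dP}, \varepsilon}^{\textup{tame}}$ with minimal resolution $f \colon Y \to X$, I would apply \autoref{prop: bounds_Picard} to extract a constant $D = D(\varepsilon)$, depending only on $\varepsilon$, such that $\rho(Y) \leq D$. I would then feed this into \autoref{lem: bounds_Gorenstein_index} to obtain an integer $N = N(\varepsilon, D)$, again depending only on $\varepsilon$, such that $NK_X$ is Cartier.

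Finally, this places $X$ in $\mathcal{X}_{\textup{dP}, N}^{\textup{tame}}$, and \autoref{cor: bounded-Gorenstein-index-dP} shows this class is bounded; the boundedness of $\mathcal{X}_{\textup{dP}, \varepsilon}^{>5}$ then follows from the inclusion of the first step. There is no real obstacle at this stage, since all of the genuine geometric input, namely the volume bound of \autoref{t-bounds-volume}, the Gorenstein index bound of \autoref{lem: bounds_Gorenstein_index}, and the Picard rank bound of \autoref{prop: bounds_Picard}, has already been established; the theorem is a synthesis that assembles these ingredients to realise $X$ inside a bounded Hilbert scheme via a uniformly chosen multiple of the anticanonical divisor.
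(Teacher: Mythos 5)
Your argument is correct and follows essentially the same route as the paper: combine \autoref{prop: bounds_Picard} and \autoref{lem: bounds_Gorenstein_index} to get a uniform Cartier index $n(\varepsilon)$, then invoke \autoref{cor: bounded-Gorenstein-index-dP}. The only (harmless) redundancy is your initial reduction of the characteristic $>5$ class to the tame class via \cite{BT22}; the paper leaves this implicit because \autoref{cor: bounded-Gorenstein-index-dP} already treats both classes, with that same reduction carried out in its proof.
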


\begin{proof}
    By \autoref{lem: bounds_Gorenstein_index} and \autoref{prop: bounds_Picard}, there exists $n=n(\varepsilon)>0$ such that $-nK_X$ is Cartier for all geometrically integral $\varepsilon$-klt del Pezzo surfaces $X$. 
    Hence we can apply \autoref{cor: bounded-Gorenstein-index-dP} to conclude that $\mathcal{X}_{\textup{dP}, \varepsilon}^{\textup{tame}} $ and $\mathcal{X}_{\textup{dP}, \varepsilon}^{>5}$ are bounded.
\end{proof}

\begin{remark}
    To prove the geometrically integral case of the BAB conjecture in characteristic $p \leq 5$, the missing ingredient is a bound on the irregularity for $\varepsilon$-klt del Pezzo surface. 
    While the canonical case (the characteristic $p >5$ klt case) has been treated in \autoref{t-intro-irr} (resp. \cite[Theorem 5.7]{BT22}), we are not able to prove a similar bound in the general case.
    Note that klt del Pezzo surfaces with $h^1(X, \mathcal{O}_X)=1$ are constructed in \cite{Tan20} over fields $k$ of characteristic $p=2, 3$ and $\pdeg(k)=1$.
\end{remark}

\bibliographystyle{amsalpha}
\bibliography{refs}

\newcommand{\etalchar}[1]{$^{#1}$}
\providecommand{\bysame}{\leavevmode\hbox to3em{\hrulefill}\thinspace}
\providecommand{\MR}{\relax\ifhmode\unskip\space\fi MR }
% \MRhref is called by the amsart/book/proc definition of \MR.
\providecommand{\MRhref}[2]{%
  \href{http://www.ams.org/mathscinet-getitem?mr=#1}{#2}
}
\providecommand{\href}[2]{#2}
\begin{thebibliography}{FGI{\etalchar{+}}05}

\bibitem[Ale94]{Ale94}
Valery Alexeev, \emph{Boundedness and {$K^2$} for log surfaces}, Internat. J.
  Math. \textbf{5} (1994), no.~6, 779--810.

\bibitem[AM04]{AM04}
Valery Alexeev and Shigefumi Mori, \emph{Bounding singular surfaces of general
  type}, Algebra, arithmetic and geometry with applications ({W}est
  {L}afayette, {IN}, 2000), Springer, Berlin, 2004, pp.~143--174.

\bibitem[Ber21a]{Berkvvfails}
Fabio Bernasconi, \emph{Kawamata-{V}iehweg vanishing fails for log del {P}ezzo
  surfaces in characteristic 3}, J. Pure Appl. Algebra \textbf{225} (2021),
  no.~11, Paper No. 106727, 16.

\bibitem[Ber21b]{Ber21}
\bysame, \emph{On the base point free theorem for klt threefolds in large
  characteristic}, Ann. Sc. Norm. Super. Pisa Cl. Sci. (5) \textbf{22} (2021),
  no.~2, 583--600.

\bibitem[Bir21]{Bir21}
Caucher Birkar, \emph{Singularities of linear systems and boundedness of {F}ano
  varieties}, Ann. of Math. (2) \textbf{193} (2021), no.~2, 347--405.

\bibitem[BLR90]{BLR}
Siegfried Bosch, Werner L\"{u}tkebohmert, and Michel Raynaud, \emph{N\'{e}ron
  models}, Ergebnisse der Mathematik und ihrer Grenzgebiete (3), vol.~21,
  Springer-Verlag, Berlin, 1990.

\bibitem[BT22]{BT22}
Fabio Bernasconi and Hiromu Tanaka, \emph{On del {P}ezzo fibrations in positive
  characteristic}, J. Inst. Math. Jussieu \textbf{21} (2022), no.~1, 197--239.

\bibitem[BW17]{BW17}
Caucher Birkar and Joe Waldron, \emph{Existence of {M}ori fibre spaces for
  3-folds in {${\rm char}\,p$}}, Adv. Math. \textbf{313} (2017), 62--101.

\bibitem[CTW17]{CTW17}
Paolo Cascini, Hiromu Tanaka, and Jakub Witaszek, \emph{On log del {P}ezzo
  surfaces in large characteristic}, Compos. Math. \textbf{153} (2017), no.~4,
  820--850.

\bibitem[CTX15]{CTX15}
Paolo Cascini, Hiromu Tanaka, and Chenyang Xu, \emph{On base point freeness in
  positive characteristic}, Ann. Sci. \'{E}c. Norm. Sup\'{e}r. (4) \textbf{48}
  (2015), no.~5, 1239--1272.

\bibitem[Dol12]{Dol12}
Igor~V. Dolgachev, \emph{Classical algebraic geometry}, Cambridge University
  Press, Cambridge, 2012, A modern view.

\bibitem[Eji19]{Eji19}
Sho Ejiri, \emph{Positivity of anticanonical divisors and {$F$}-purity of
  fibers}, Algebra Number Theory \textbf{13} (2019), no.~9, 2057--2080.

\bibitem[Eke88]{Eke88}
Torsten Ekedahl, \emph{Canonical models of surfaces of general type in positive
  characteristic}, Inst. Hautes \'{E}tudes Sci. Publ. Math. (1988), no.~67,
  97--144.

\bibitem[FGI{\etalchar{+}}05]{FAG}
Barbara Fantechi, Lothar G\"{o}ttsche, Luc Illusie, Steven~L. Kleiman, Nitin
  Nitsure, and Angelo Vistoli, \emph{Fundamental algebraic geometry},
  Mathematical Surveys and Monographs, vol. 123, American Mathematical Society,
  Providence, RI, 2005.

\bibitem[FS20]{FS20}
Andrea Fanelli and Stefan Schr\"{o}er, \emph{Del {P}ezzo surfaces and {M}ori
  fiber spaces in positive characteristic}, Trans. Amer. Math. Soc.
  \textbf{373} (2020), no.~3, 1775--1843.

\bibitem[HMX14]{HMX14}
Christopher~D. Hacon, James McKernan, and Chenyang Xu, \emph{A{CC} for log
  canonical thresholds}, Ann. of Math. (2) \textbf{180} (2014), no.~2,
  523--571.

\bibitem[HW81]{HW81}
Fumio Hidaka and Keiichi Watanabe, \emph{Normal {G}orenstein surfaces with
  ample anti-canonical divisor}, Tokyo J. Math. \textbf{4} (1981), no.~2,
  319--330. \MR{646042}

\bibitem[HW22]{HW22}
Christopher Hacon and Jakub Witaszek, \emph{The minimal model program for
  threefolds in characteristic 5}, Duke Math. J. \textbf{171} (2022), no.~11,
  2193--2231.

\bibitem[HX15]{HX15}
Christopher~D. Hacon and Chenyang Xu, \emph{On the three dimensional minimal
  model program in positive characteristic}, J. Amer. Math. Soc. \textbf{28}
  (2015), no.~3, 711--744.

\bibitem[JW21]{JW21}
Lena Ji and Joe Waldron, \emph{Structure of geometrically non-reduced
  varieties}, Trans. Amer. Math. Soc. \textbf{374} (2021), no.~12, 8333--8363.

\bibitem[Kle66]{Kle66}
Steven~L. Kleiman, \emph{Toward a numerical theory of ampleness}, Ann. of Math.
  (2) \textbf{84} (1966), 293--344.

\bibitem[Kol85]{Kol85}
J\'{a}nos Koll\'{a}r, \emph{Toward moduli of singular varieties}, Compositio
  Math. \textbf{56} (1985), no.~3, 369--398.

\bibitem[Kol96]{Kol96}
\bysame, \emph{Rational curves on algebraic varieties}, Ergebnisse der
  Mathematik und ihrer Grenzgebiete. 3. Folge. A Series of Modern Surveys in
  Mathematics, vol.~32, Springer-Verlag, Berlin, 1996.

\bibitem[Kol13]{kk-singbook}
\bysame, \emph{Singularities of the minimal model program}, Cambridge Tracts in
  Mathematics, vol. 200, Cambridge University Press, Cambridge, 2013, With a
  collaboration of S\'{a}ndor Kov\'{a}cs.

\bibitem[Laz04]{Laz04}
Robert Lazarsfeld, \emph{Positivity in algebraic geometry. {I}}, Ergebnisse der
  Mathematik und ihrer Grenzgebiete. 3. Folge. A Series of Modern Surveys in
  Mathematics, vol.~48, Springer-Verlag, Berlin, 2004, Classical setting: line
  bundles and linear series.

\bibitem[Lip69]{Lip69}
Joseph Lipman, \emph{Rational singularities, with applications to algebraic
  surfaces and unique factorization}, Inst. Hautes \'{E}tudes Sci. Publ. Math.
  (1969), no.~36, 195--279.

\bibitem[Lip78]{Lip78}
\bysame, \emph{Desingularization of two-dimensional schemes}, Ann. of Math. (2)
  \textbf{107} (1978), no.~1, 151--207.

\bibitem[Mad16]{Mad16}
Zachary Maddock, \emph{Regular del {P}ezzo surfaces with irregularity}, J.
  Algebraic Geom. \textbf{25} (2016), no.~3, 401--429.

\bibitem[MS03]{MS03}
Shigefumi Mori and Natsuo Saito, \emph{Fano threefolds with wild conic bundle
  structures}, Proc. Japan Acad. Ser. A Math. Sci. \textbf{79} (2003), no.~6,
  111--114.

\bibitem[Poo17]{Poo17}
Bjorn Poonen, \emph{Rational points on varieties}, Graduate Studies in
  Mathematics, vol. 186, American Mathematical Society, Providence, RI, 2017.

\bibitem[PW22]{PW}
Zsolt Patakfalvi and Joe Waldron, \emph{Singularities of general fibers and the
  {LMMP}}, Amer. J. Math. \textbf{144} (2022), no.~2, 505--540.

\bibitem[Rei94]{Rei94}
Miles Reid, \emph{Nonnormal del {P}ezzo surfaces}, Publ. Res. Inst. Math. Sci.
  \textbf{30} (1994), no.~5, 695--727.

\bibitem[Sat23]{Sa23}
Kenta Sato, \emph{{General hyperplane sections of log canonical threefolds in
  positive characteristic}}, Available at
  \href{https://arxiv.org/abs/2303.14599}{arXiv:2303.14599}.

\bibitem[Sch07]{Sch07}
Stefan Schr\"{o}er, \emph{Weak del {P}ezzo surfaces with irregularity}, Tohoku
  Math. J. (2) \textbf{59} (2007), no.~2, 293--322.

\bibitem[Sch10]{Sch10}
\bysame, \emph{On fibrations whose geometric fibers are nonreduced}, Nagoya
  Math. J. \textbf{200} (2010), 35--57.

\bibitem[Ser88]{Ser88}
Jean-Pierre Serre, \emph{Algebraic groups and class fields}, Graduate Texts in
  Mathematics, vol. 117, Springer-Verlag, New York, 1988, Translated from the
  French.

\bibitem[{Sta}]{stacks-project}
The {Stacks Project Authors}, \emph{\itshape {S}tacks {P}roject},
  \url{http://stacks.math.columbia.edu}.

\bibitem[Tan18a]{Tan18a}
Hiromu Tanaka, \emph{Behavior of canonical divisors under purely inseparable
  base changes}, J. Reine Angew. Math. \textbf{744} (2018), 237--264.

\bibitem[Tan18b]{Tan18b}
\bysame, \emph{Minimal model program for excellent surfaces}, Ann. Inst.
  Fourier (Grenoble) \textbf{68} (2018), no.~1, 345--376.

\bibitem[Tan20]{Tan20}
\bysame, \emph{Pathologies on {M}ori fibre spaces in positive characteristic},
  Ann. Sc. Norm. Super. Pisa Cl. Sci. (5) \textbf{20} (2020), no.~3.

\bibitem[Tan21]{Tan21}
\bysame, \emph{Invariants of algebraic varieties over imperfect fields}, Tohoku
  Math. J. (2) \textbf{73} (2021), no.~4, 471--538.

\bibitem[Tan24]{Tan19}
\bysame, \emph{Boundedness of regular del {P}ezzo surfaces over imperfect
  fields}, Manuscripta Math. \textbf{174} (2024), no.~1-2, 355--379.

\bibitem[Wal23]{Wal23}
Joe Waldron, \emph{{Mori fibre spaces for $3$-folds over imperfect fields}},
  Available at \href{https://arxiv.org/abs/2303.00615}{arXiv:2303.00615}.

\end{thebibliography}

\end{document}